\pdfoutput=1
\documentclass[11pt]{article}

\usepackage[margin=1in, tmargin = 1in]{geometry}

\usepackage[T1]{fontenc}

\usepackage{listings}
\usepackage{csquotes}
\usepackage{amssymb,amsmath,amsthm}
\usepackage{verbatim,gensymb,titlesec,url}
\titlespacing\section{0pt}{12pt plus 4pt minus 2pt}{4pt plus 2pt minus 0pt}
\titlespacing\subsection{0pt}{12pt plus 4pt minus 2pt}{2pt plus 2pt minus 0pt}
\titlespacing\subsubsection{0pt}{12pt plus 4pt minus 2pt}{2pt plus 2pt minus 0pt}

\usepackage{array,float}
\usepackage{multirow, pbox}
\usepackage{adjustbox,color}

\usepackage{bm}
\usepackage{caption}
\usepackage{subcaption}
\usepackage{placeins}

\newtheorem{theorem}{Theorem}[section]

\newtheorem{remark}{Remark}[section]

\newtheorem{proposition}{Proposition}

\usepackage{lastpage, pbox}
\usepackage{wrapfig,enumerate,mathtools}
\usepackage{graphicx}
\usepackage[update]{epstopdf}
\usepackage{soul}

\usepackage{enumerate}
\usepackage{enumitem}
\usepackage{hyperref}

\definecolor{wine}{RGB}{165,0,33}
\definecolor{greelue}{RGB}{0,180,180}
\title{\bf A hybrid CNN–adjoint optimization framework for reconstruction of viscoelastic tissue properties in magnetic resonance elastography}

\author{Anwesa Dey\thanks{Department of Mathematics, The University of Utah, address USA. e-mail: anwesary@math.utah.edu} \and Johann Rudi \thanks{Department of Mathematics, Virginia Tech, address USA. e-mail: jrudi@vt.edu} \and Elena Cherkaev \thanks{Department of Mathematics, The University of Utah, address,
address, USA. e-mail: elena@math.utah.edu}}

\date{}

\begin{document}

\maketitle

\begin{abstract}

Magnetic resonance elastography (MRE) is a noninvasive imaging modality for quantifying the viscoelastic properties of soft tissues by observing shear wave propagation. Recovering the complex-valued shear modulus from measured displacement fields leads to a severely ill-posed inverse problem, particularly in the presence of noise and a limited number of boundary excitations. In this work, we investigate adjoint-based optimization, convolutional neural network (CNN) reconstruction, and a hybrid framework that combines the two approaches. The forward model is based on a scalar form of the modified stationary Stokes system with a complex shear modulus. We first present a rigorous theoretical analysis of an adjoint-based optimization formulation, establishing well-posedness of the forward boundary-value problem, existence of minimizers, and first-order optimality conditions, and implement a nonlinear conjugate-gradient method with Armijo line search. The optimization method can refine coefficient reconstructions through PDE-constrained data fitting and regularization, but its performance can depend strongly on the choice of the initial guess. We then construct a two-dimensional CNN that learns a nonlinear mapping from complex-valued internal displacement measurements to spatially varying complex-valued shear modulus fields, showing substantially improved initialization. The CNN provides rapid, spatially informative reconstructions once trained, but its accuracy depends on the availability of sufficiently large and representative training datasets, the training process can be computationally expensive, and its predictions are not explicitly constrained by the governing PDE.
Motivated by the complementary strengths of physics-based and data-driven methods, we propose a hybrid approach in which a CNN provides an informed initial guess for the adjoint-based optimization, yielding faster convergence and improved accuracy. The CNN is trained on coefficient fields containing individual perturbations and is evaluated both on individual structures and on more challenging coefficient fields containing both types of perturbations within the same domain, although such combined configurations are not included in the training data. Numerical experiments demonstrate that the CNN provides accurate initial reconstructions and generalizes to previously unseen combined configurations, while the subsequent PDE-constrained optimization further refines the reconstructed coefficient. The results indicate the potential of combining data-driven initialization with physics-based optimization for accurate and efficient reconstruction of complex-valued shear modulus fields in MRE.

\end{abstract}

{\em Keywords}: {Optimization, 
Non-linear conjugate gradient descent, machine learning, tomography, inverse problems.}

\section{Introduction} \label{sec-intro}
Magnetic resonance elastography (MRE) is a non-invasive imaging modality that enables the quantitative assessment of tissue mechanical properties by visualizing the propagation of shear waves through soft tissues. It has become an essential tool for diagnosing and monitoring diseases characterized by alterations in tissue stiffness, such as liver fibrosis, tumors, and neurodegenerative disorders. Unlike conventional magento resonance imaging (MRI), which primarily provides structural or functional information, MRE adds a mechanical dimension to imaging, offering insights into the viscoelastic behavior of tissues in vivo.

The MRE process consists of three main stages. First, low-frequency mechanical vibrations are introduced into the tissue using an external actuator, inducing shear waves that traverse the imaging domain. Second, specialized MRI pulse sequences, typically phase-contrast imaging synchronized with the vibration, are employed to capture the resulting displacement field in three orthogonal directions. This yields a high-resolution spatiotemporal representation of wave propagation. Finally, the acquired displacement data are used to solve an inverse problem governed by physical laws, typically modeled by the time-harmonic or stationary form of the Stokes or Navier--Stokes equations. This inversion yields a spatial map of the complex shear modulus, which encodes both tissue stiffness and damping characteristics. However, despite its clinical potential, the inverse problem in MRE is notoriously ill-posed due to measurement noise, model uncertainties, and the often partial or corrupted nature of the displacement data. 

There are several works that consider models for shear wave displacements in MRE. In \cite{Higashimori2007,Nakamura2008}, the authors use scalar and simplified version of the time-dependent Kelvin--Voigt model for the shear wave displacements in MRE. In \cite{Hubmer2018}, a steady state model for linearized elasticity with
displacement-traction boundary conditions is proposed. The authors in \cite{Jiang2011} use different models like the general time-dependent Kelvin--Voigt viscoelastic model in
an isotropic, nearly incompressible medium, a steady state version of the same 
model, known as the stationary viscoelastic equation, and the modified stationary Stokes model for MRE, which is
motivated by dominance of transverse waves.

Correspondingly, the inverse problem of obtaining the shear modulus has also been studied both from the theoretical and computational aspects in some works. In \cite{VanHouten1999}, a finite element-based nonlinear inversion scheme is proposed for MRE based on small overlapping subzones of the total region of interest. Some uniqueness results for an inverse problem to reconstruct the
viscoelastic coefficients from a single interior data, given by a time-periodic
solution of the time-dependent simplified Kelvin--Voigt model was given by the authors in \cite{Higashimori2007}. The authors in \cite{Nakamura2008} proposed a reconstruction procedure for obtaining the viscoelastic coefficients of a Kelvin--Vogit model, assuming the coefficients are known on the boundary. A stability analysis for reconstruction of viscoelastic coefficients
of the Stokes equation in MRE was presented in \cite{Ammari2015}. The authors in \cite{Hubmer2018} proposed a Landweber scheme to reconstruct the Lam\'e parameters from a static displacement field. Recently, in \cite{JiangNakamura2019,Jiang2021}, a Levenberg-Marquardt scheme and its analysis for numerically solving the MRE inverse problem was studied.

Broadly, coefficient reconstruction problems arising in MRE belong to the class of PDE-constrained inverse problems, for which optimization methods provide a general computational framework. Adjoint-based methods are particularly attractive for problems involving spatially distributed parameters, since they allow derivatives of the objective functional with respect to the unknown coefficient to be computed efficiently. Such approaches have been studied extensively for large-scale PDE-constrained inverse problems. For example, Epanomeritakis et al.~\cite{epanomeritakis2008newton} developed a Newton--CG method for elastic full-waveform inversion. Related adjoint-based approaches have also been used for the recovery of rheological parameters in flow models. Petra et al.~\cite{petra2012inexact} considered inversion of basal sliding and rheological parameters in a nonlinear Stokes ice-sheet model, while Worthen et al.~\cite{worthen2014adjoint} developed an adjoint-based inversion framework for rheological parameters in nonlinear viscous mantle flow.

Alongside optimization-based approaches, learning-based methods have become increasingly important in computational inverse problems. Neural networks can provide rapid approximations of inverse maps from observed data to unknown parameters, while more general operator-learning approaches seek to learn mappings between function spaces associated with PDE-governed systems \cite{kovachki2023neural}. Learning-based methods have also been investigated in wave-based inverse problems. Sun and Demanet~\cite{sun_demanet_fwi} used deep learning for low-frequency extrapolation in full-waveform inversion, while Sun et al.~\cite{sun_real_data} developed a learning-based strategy applicable to field data without requiring corresponding real-data labels. Li, Demanet, and Zepeda-N'u~nez~\cite{li_wideband} proposed a multi-frequency neural-network approach for stable and efficient inversion of wave data.

These developments have motivated increasing interest in combining data-driven methods with physics-based computational models. Ghattas and Willcox~\cite{ghattas2021learning} discuss this connection from the perspective of inverse problems and model reduction, emphasizing the complementary roles of physical models and observational data. Data-driven approximations have also been investigated within computational frameworks for inverse problems \cite{bigoni2020data}. In this setting, learning-based methods need not replace physics-based inversion; instead, learned reconstructions can provide useful information for initializing or accelerating subsequent model-based optimization. This perspective motivates the hybrid reconstruction strategy considered in this work.

We contribute to the field of MRE inverse problems by carrying out a theoretical analysis of an optimization-based framework for the reconstruction of a complex-valued shear modulus, including the well-posedness of the forward problem, existence of minimizers, and derivation of the corresponding first-order optimality conditions. We then investigate CNN-based and hybrid reconstruction frameworks in two dimensions. In the hybrid approach, the CNN reconstruction is used as an informed initial guess for the adjoint-based optimization. The CNN is trained on shear modulus fields containing individual Gaussian and mollified elliptical perturbations, and its performance is first evaluated on coefficient fields containing individual inclusions. We then consider coefficient fields containing both types of perturbations simultaneously, although such combined configurations are not included in the training data, in order to investigate the generalization of the learned reconstruction. Finally, we study the performance of the CNN and hybrid frameworks in the presence of measurement noise.

The paper is organized as follows. In the next section, we describe the stationary Stokes models used for the MRE inverse problem. In Section~3, we formulate the adjoint-based optimization problem for the reconstruction of the shear modulus and present the corresponding theoretical analysis. In Section~4, we introduce the CNN-based and hybrid reconstruction frameworks and present two-dimensional numerical experiments for noiseless and noisy displacement data. The final section summarizes the main conclusions and discusses directions for future work.

\section{Modified stationary Stokes system} 
\label{sec-MSS}
For the forthcoming dicussion, we define the following spaces and norms: The space $C^{0,1}(\bar{\Omega})$ denotes the space of Lipschitz continuous
functions on $\bar{\Omega}$, while $H^s(\Omega)$ denotes the
$L^2$-based Sobolev space of order $s$. We also use
$W^{k,\infty}(\Omega)$ to denote the Sobolev space of functions whose
weak derivatives up to order $k$ are essentially bounded. Thus,
\[
\|v\|_{W^{0,\infty}(\Omega)}
=
\|v\|_{L^\infty(\Omega)},
\]
and
\[
\|v\|_{W^{2,\infty}(\Omega)}
=
\max_{|\alpha|\leq 2}
\|\partial^\alpha v\|_{L^\infty(\Omega)}.
\]
For a collection of solutions
$\mathbf{u}=(u^1,\ldots,u^I)$, we define
\[
\|\mathbf{u}\|_{W^{2,\infty}(\Omega)}
=
\max_{1\leq j\leq I}
\|u^j\|_{W^{2,\infty}(\Omega)}.
\]

An appropriate partial differential equation (PDE) model for modeling the processes in magnetic resonance elastography is given by the following modified stationary Stokes system \cite{Jiang2021}:
\begin{equation}\label{eq:mss}
     \begin{aligned} 
      \nabla\cdot [A(\nabla u + \nabla u^T)]- \nabla p + \rho \omega^2 u = 0,& \quad\text{in}\hspace{2mm} \Omega \subset \mathbb{R}^n,\\
      \nabla \cdot u = 0,&\quad\text{in}\hspace{2mm} \Omega,\\
      u = f,&\quad \text{on}\hspace{2mm} \partial\Omega,\\
    \end{aligned}
\end{equation}
where $\Omega$, an open subset of $\mathbb{R}^n$, is the region of interest, \textit{u} is the wave displacement vector, $p$ is the pressure, $A$ is the scalar viscoelastic shear modulus, $\omega>0$ is the angular frequency. The corresponding inverse problem in MRE can then be stated as follows:  Let $(u, p) \in H^{1}(\Omega; \mathbb{C}^d) \times L^{2}(\Omega; \mathbb{C})/\mathbb{C}$ with a known
$f \in H^{1/2}(\partial\Omega; \mathbb{C}^d)$ be the solution of \eqref{eq:mss}. Using the MRE data $u\in \Omega$, find the unknown viscoelastic shear modulus $A$.

\subsection{Scalar model for the modified stationary Stokes system}

Under the assumption on incompressibility $\nabla \cdot u = 0$, the modified stationary Stokes system  reduces to the following scalar equation \cite{Jiang2021}:
\begin{equation}\label{eq:cplxPDE}
    \begin{aligned}
      \nabla\cdot [A(x)\nabla u] + \rho \omega^2 u = 0, &\quad \text{in}\hspace{2mm} \Omega,\\
      u = f, &\quad \text{on}\hspace{2mm} \partial\Omega.\\
    \end{aligned}
\end{equation} 
For both models, the viscoelastic shear modulus $A(x)$ is modeled as the sum of a constant baseline value and a perturbation. The constant baseline value represents the average shear modulus of the liver tissue in the absence of any abnormalities while the perturbation function represents the deviations from this baseline due to abnormalities in the liver tissue.

\section{Reconstruction of shear modulus function using adjoint-based optimization}
\label{sec-classicalopt}
Liver tissues are soft tissues that are viscoelastic in nature, exhibiting both elastic and viscous properties. In such a material, the viscoelastic shear modulus function \(A(x)\) is modeled as the sum of a constant baseline value $(1+i)$ and a complex-valued perturbation, $B(x) = B_1(x) + i B_2(x)$, where $B_1$ is such that $Re(A(x)) = 1+B_1(x) > 0$. The real part $1+B_1(x)$, which is the elastic component of the shear modulus, is called the storage modulus. It quantifies the energy stored in the liver tissues during deformation caused by shear stress, that is recoverable when the deforming force is removed. The imaginary part $1+B_2(x)$, which is the viscous component of the shear modulus, is called the loss modulus. It quantifies the energy loss during wave propagation through the liver tissues.

In this section, we aim to solve the inverse problem corresponding to equation \eqref{eq:cplxPDE} by reconstructing the complex-valued shear modulus function perturbation $B(x)\in L^2(\Omega)$, with $\Omega \subset \mathbb{R}^n$, given the displacement data $u(x)$, using an adjoint-based optimization method.  For this purpose, we first note that the use of a complex shear modulus function \(A(x) = A_1(x) +iA_2(x)\) in the complex PDE state equation \eqref{eq:cplxPDE} results in a complex-valued wave displacement, \(u = u_1+iu_2\). We then aim to obtain $B_1(x), B_2(x) \in L^2(\Omega)$ by minimizing the following functional:
For a collection of prescribed Dirichlet boundary data
$\{f^j\}_{j=1}^{I}$, let $u^j$ denote the corresponding solution of
\eqref{eq:cplxPDE}, and let $u^{\delta,j}$ denote the associated
measured displacement field. We consider the following PDE-constrained
optimization problem:
\begin{equation}\label{eq:cplxcostfn}
\begin{aligned}
\min_{B_1,B_2\in L^2(\Omega)}
J(\{u^j\}_{j=1}^{I},B_1,B_2)
:={}&
\frac{\alpha}{2}
\sum_{j=1}^{I}
\int_\Omega
\left|u^j(x)-u^{\delta,j}(x)\right|^2\,dx
\\
&+
\frac{\beta_1}{2}
\int_\Omega |B_1(x)|^2\,dx
+
\frac{\beta_2}{2}
\int_\Omega |B_2(x)|^2\,dx ,
\end{aligned}
\end{equation}
subject to
\begin{equation}\label{eq:cplxPDE_multi}
\begin{aligned}
\nabla\cdot\left[A(x)\nabla u^j\right]
+\rho\omega^2 u^j &=0,
&&\text{in } \Omega,\\
u^j&=f^j,
&&\text{on } \partial\Omega,
\qquad j=1,\ldots,I,
\end{aligned}
\end{equation}
where
\[
A(x)=(1+i)+B(x),
\qquad
B(x)=B_1(x)+iB_2(x).
\]
In the functional $J$, the first term represents the least-squares data fitting term to determine the fit of the measured MRE data $u^\delta$. The norm $\|\cdot \|$ stands for the complex modulus. The remaining two terms are the standard Tikhonov regularization for the unknown functions $B_1,B_2$.

\subsection{Theoretical results}

We now present some theoretical results of the MRE inverse problem and the associated optimization problem \eqref{eq:cplxcostfn}--\eqref{eq:cplxPDE}. We start with the well-posedness of the solutions of \eqref{eq:cplxPDE}.
\begin{theorem}[Th. 7.1 \cite{Jiang2011}, Th. 8.12 \cite{Gilbarg1977}]\label{th:MREpdeexistence}
For a given $f\in H^{1/2}(\partial\Omega)$  and $B\in L^\infty(\Omega)$, there exists an unique solution of \eqref{eq:cplxPDE} $u\in H^1(\Omega).$ Furthermore, the following stability estimate is satisfied
\[
\|u\|_{H^1(\Omega)} \leq C_1 \|f\|_{H^{1/2}(\partial\Omega)},
\]
where $C_1$ is a constant dependent on $\|B\|_2$. Furthermore, if $B$ is uniformly Lipschitz continuous and $\partial\Omega\in C^2$, then we have 
$u\in H^2(\Omega)$ and 
\[
\|u\|_{H^2(\Omega)} \leq C_2 \|f\|_{H^{1/2}(\partial\Omega)},
\]
where $C_2$ is a constant dependent on $\|B\|_2$.
\end{theorem}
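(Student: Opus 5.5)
The plan has two parts: an $H^1$ theory via a variational formulation with a Fredholm/uniqueness argument, and then a global $H^2$ bootstrap following Th.~8.12 of \cite{Gilbarg1977}. \textbf{The final estimate with $\|f\|_{H^{1/2}(\partial\Omega)}$ on the right is the step I cannot justify, and I believe it is not true as stated.}

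\textbf{Reduction to homogeneous data.} Let $Ef\in H^1(\Omega)$ be a bounded right inverse of the trace, with $\|Ef\|_{H^1}\le C\|f\|_{H^{1/2}(\partial\Omega)}$. Write $u=Ef+w$ with $w\in H^1_0(\Omega)$. Then $w$ solves
\[
a(w,v):=\int_\Omega A\nabla w\cdot\nabla \bar v\,dx-\rho\omega^2\int_\Omega w\bar v\,dx=\ell(v)\quad\text{for all } v\in H^1_0(\Omega),
\]
where $\ell(v)=-a(Ef,v)$ and $\|\ell\|_{H^{-1}}\le C\|f\|_{H^{1/2}}$.

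\textbf{Existence and uniqueness in $H^1$.} Since $B\in L^\infty$ and $\operatorname{Re}A=1+B_1>0$, the form $a$ is bounded on $H^1_0$. Assuming $1+B_1$ is bounded below by a positive constant (essential infimum, stronger than pointwise positivity), $a$ satisfies a G\aa rding inequality:
\[
\operatorname{Re}a(v,v)\ge c\|\nabla v\|^2_{L^2}-\rho\omega^2\|v\|^2_{L^2}.
\]
By the compact embedding $H^1_0\hookrightarrow L^2$, the Fredholm alternative applies. Existence then follows from uniqueness.

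For uniqueness, take $\ell=0$ and $v=w$, and look at the imaginary part:
\[
\int_\Omega (1+B_2)|\nabla w|^2\,dx=0.
\]
This forces $\nabla w=0$, hence $w=0$, as long as the loss modulus $1+B_2$ is positive. That is exactly the absorbing situation treated in Th.~7.1 of \cite{Jiang2011}, which I invoke for this step rather than adding a hypothesis of my own.

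The bounded inverse from the Fredholm alternative then gives
\[
\|w\|_{H^1}\le C\|\ell\|_{H^{-1}},
\]
and therefore $\|u\|_{H^1}\le C_1\|f\|_{H^{1/2}}$. The constant depends on the coefficient bounds, i.e.\ on $B$.

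\textbf{$H^2$ bootstrap.} When $B$ is uniformly Lipschitz and $\partial\Omega\in C^2$, rewrite the equation in nondivergence form:
\[
A\Delta u=-\nabla A\cdot\nabla u-\rho\omega^2u\in L^2(\Omega).
\]
The right-hand side lies in $L^2$ because $\nabla A\in L^\infty$ and $u\in H^1$. Applying Th.~8.12 of \cite{Gilbarg1977} componentwise to the complex system (the leading coefficient is uniformly elliptic in the real sense because $\operatorname{Re}A>0$) gives
\[
\|u\|_{H^2}\le C\bigl(\|u\|_{L^2}+\|\nabla A\cdot\nabla u\|_{L^2}+\|\tilde f\|_{H^2}\bigr),
\]
where $\tilde f$ is an $H^2$ extension of the boundary data. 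The first two terms are bounded by $C\|f\|_{H^{1/2}}$ using the $H^1$ estimate above.

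\textbf{The gap in the final step.} The last term $\|\tilde f\|_{H^2}$ cannot in general be bounded by $\|f\|_{H^{1/2}(\partial\Omega)}$. Any $H^2$ solution forces $f\in H^{3/2}(\partial\Omega)$, and the natural estimate has $\|f\|_{H^{3/2}}$ on the right. I do not see how to obtain the stated bound $\|u\|_{H^2}\le C_2\|f\|_{H^{1/2}}$ for arbitrary $f\in H^{1/2}(\partial\Omega)$. If the data are confined to a fixed finite-dimensional family of smooth boundary profiles, the two norms are equivalent there and the stated form would follow, but that restriction is not in the statement. I would not claim the statement as worded is proved without some such restriction or the $H^{3/2}$ correction.
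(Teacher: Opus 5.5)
Your diagnosis is correct: the defect is in the statement, not in your argument. The paper gives no proof beyond the two citations, and your two steps are exactly what those references deliver. In particular, Th.~8.12 of \cite{Gilbarg1977} assumes $u-\varphi\in H^1_0(\Omega)$ for some $\varphi\in H^2(\Omega)$ and puts $\|\varphi\|_{H^2(\Omega)}$ on the right-hand side. Its constant also depends on the Lipschitz bounds of the coefficients, so it cannot produce the claimed bound.

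For $n\ge 2$ both parts of the $H^2$ claim are false. The trace maps $H^2(\Omega)$ continuously into $H^{3/2}(\partial\Omega)$, so any $f\in H^{1/2}(\partial\Omega)\setminus H^{3/2}(\partial\Omega)$ gives $u\notin H^2(\Omega)$. The estimate also fails for smooth data. Take $f_k$ with $\|f_k\|_{H^{1/2}(\partial\Omega)}=1$ and $\|f_k\|_{H^{3/2}(\partial\Omega)}\to\infty$, e.g.\ $k^{-1/2}e^{ik\theta}$ on the boundary of a disc. Then $\|u_k\|_{H^2(\Omega)}\ge c\,\|f_k\|_{H^{3/2}(\partial\Omega)}\to\infty$. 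Only for $n=1$, where $\partial\Omega$ is two points, does your finite-dimensional remark rescue the statement as written. What your bootstrap actually proves, $\|u\|_{H^2(\Omega)}\le C_2\|f\|_{H^{3/2}(\partial\Omega)}$, is the correct replacement.

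There are three smaller points to tighten.

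\textbf{The extra hypotheses.} The hypotheses you use are genuinely additional, not merely inherited from \cite{Jiang2011}; you should say so rather than frame them as borrowed.
\begin{itemize}
\item With only $B\in L^\infty$, even the $H^1$ part fails. For $B=-i$ one has $A\equiv 1$, and uniqueness is lost whenever $\rho\omega^2$ is a Dirichlet eigenvalue of $-\Delta$ on $\Omega$.
\item A constant depending on $\|B\|_{L^2}$ alone is impossible. A layer of width $\epsilon$ on which $A=\epsilon(1+i)$ has $\|B\|_{L^2}=O(\epsilon^{1/2})$. Yet already in one dimension with fixed boundary values it drives $\|u\|_{H^1}$ to infinity like $\epsilon^{-1/2}$.
\end{itemize}
Your constants correctly depend on $\operatorname{ess\,inf}(1+B_1)$, the loss modulus, $\|B\|_{L^\infty}$, and, for $C_2$, the Lipschitz norm of $B$. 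This also means the uniform $H^2$ bound used in the proof of Theorem~\ref{th:existence}, for a minimizing sequence that is only $L^2$-bounded, is not available.

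\textbf{A shortcut for the $H^1$ part.} If $1+B_2\ge b_0>0$, you can bypass the Fredholm alternative. The estimate $|a(v,v)|\ge\operatorname{Im}a(v,v)\ge b_0\|\nabla v\|_{L^2}^2$ is coercive on $H^1_0(\Omega)$. Lax--Milgram then gives existence, uniqueness and an explicit $C_1$ at once.

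\textbf{The componentwise step.} Th.~8.12 of \cite{Gilbarg1977} is a scalar real theorem. The equation $A\Delta u=h$, split into real and imaginary parts, is still coupled in its principal part, so ``componentwise'' needs one more step. Test the weak form with $\phi/A$, which is admissible because $1/A$ is Lipschitz when $\operatorname{Re}A\ge a_0$. This gives $\Delta u=-A^{-1}\nabla A\cdot\nabla u-\rho\omega^2A^{-1}u\in L^2(\Omega)$ weakly. The scalar result then applies to $\operatorname{Re}u$ and $\operatorname{Im}u$ separately.
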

We next state the solvability of the MRE inverse problem as given in the seminal paper \cite[Th. 2.1]{Bal2013}. 
\begin{theorem}\label{th:inverse}
Let $B(x)\in C^{0,1}(\bar{\Omega})\cap H^{n/2+4+\epsilon}(\bar{\Omega}),$ with $\Omega \subset \mathbb{R}^n$. Then for $I = I_n = \dfrac{1}{2}n(n+3)$, corresponding to the $I$ boundary conditions $(f^j)_{1\leq j \leq I}$, the solutions $(u^j)_{1\leq j \leq I}$ of \eqref{eq:cplxPDE} uniquely determine $B(x),$ where $\Omega \subset \mathbb{R}^n$. Furthermore, we have the following estimate
\[
\|B\|_{W^{0,\infty}(\Omega)} \leq D \|(u^1,\cdots,u^I)\|_{W^{2,\infty}(\Omega)}.
\]
\end{theorem}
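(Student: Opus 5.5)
The plan is to follow the Bal--Uhlmann reconstruction strategy behind \cite[Th.~2.1]{Bal2013}: turn the internal data into a linear, overdetermined algebraic/differential system for $A=(1+i)+B$, and then read off $A$ by stable algebraic elimination. Since $A$ differs from $B$ only by a constant, uniqueness of $A$ is the same as uniqueness of $B$, and it suffices to bound $\|A\|_{W^{0,\infty}}$.

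\textbf{Step 1 (rewriting the equation).} Expand \eqref{eq:cplxPDE} for each $u^j$ as
\[
A\,\Delta u^j+\nabla A\cdot\nabla u^j+\rho\omega^2 u^j=0,
\qquad j=1,\dots,I .
\]
This is linear in the unknowns $(A,\nabla A)$, with coefficients $\Delta u^j$, $\nabla u^j$ and $u^j$ built from the data. In particular the coefficients involve at most second derivatives of $u^j$, which is why the $W^{2,\infty}$ norm of the data appears in the estimate.

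\textbf{Step 2 (eliminating $\nabla A$).} Take $n+1$ of the solutions such that the $n\times n$ matrix $H=[\nabla u^1,\dots,\nabla u^n]$ is invertible. Then $\nabla u^{n+1}=\sum_k\theta_k\nabla u^k$, where $\theta=H^{-1}\nabla u^{n+1}$ is determined by the data. Subtracting the corresponding combination of the equations cancels $\nabla A$ and leaves
\[
A\Big(\Delta u^{n+1}-\sum_k\theta_k\Delta u^k\Big)=-\rho\omega^2\Big(u^{n+1}-\sum_k\theta_k u^k\Big).
\]
This gives $A$ pointwise wherever the bracket on the left is nonzero.

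The remaining $n(n+1)/2-1$ additional solutions (so that $I=n(n+3)/2$ in total) give further relations, for instance through the matrix identities used in \cite{Bal2013}. These supply redundancy, so that at every point of $\bar\Omega$ some elimination has a nondegenerate denominator.

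\textbf{Step 3 (existence of good boundary data).} This is the main obstacle. I need $f^j$ such that, uniformly on $\bar\Omega$, $\det H$ and the scalar coefficient multiplying $A$ stay bounded away from zero. I would use complex geometrical optics solutions $u=e^{\zeta\cdot x}(1+\psi_\zeta)$ with $\zeta\cdot\zeta=0$, $|\zeta|$ large, for the Liouville-transformed Schr\"odinger form of \eqref{eq:cplxPDE}. The remainder bounds $\|\psi_\zeta\|_{C^2}\lesssim|\zeta|^{-1}$ need the Sobolev embedding $H^{n/2+4+\epsilon}\hookrightarrow C^{2}$ applied after taking derivatives of the potential, which is exactly where the regularity $B\in H^{n/2+4+\epsilon}$ is used. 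At leading order, $\nabla u\approx\zeta u$ and $\Delta u$ is an explicit combination of these, so the determinants reduce to those of explicit vectors that can be chosen independent. These conditions are open, so they persist for the actual boundary traces $f^j=u^j|_{\partial\Omega}$, possibly after a compactness and covering argument that uses a different admissible family on different subregions.

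\textbf{Step 4 (estimate).} With the denominators bounded below by $c>0$, the formula from Step 2 gives
\[
|A|\le c^{-1}C\|(u^1,\dots,u^I)\|_{W^{2,\infty}}
\]
pointwise. Together with $|B|\le|A|+\sqrt2$ and a rescaling of the data normalization, this yields the stated bound with constant $D$. The same formula gives uniqueness, since two coefficients producing the same data must coincide.
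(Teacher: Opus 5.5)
The paper gives no argument of its own here; it only cites Bal--Uhlmann. There, $I_n=1+n+(\tfrac{n(n+1)}{2}-1)$ is the count needed for an anisotropic tensor with lower-order terms. Your Steps 1--2 correctly specialize this to isotropic $A$ with known $\rho\omega^2$: $n+1$ solutions give $A=-\rho\omega^2N/D$, with $N=u^{n+1}-\sum_k\theta_ku^k$ and $D=\Delta u^{n+1}-\sum_k\theta_k\Delta u^k$. The other $I_n-n-1$ solutions are simply superfluous; your claim that they ``supply redundancy'' is unproved and not needed.

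The gap is in Step 3, which you rightly call the main obstacle. As described, the leading-order CGO computation cannot show $D\neq0$. Because $\zeta_j\cdot\zeta_j=0$, $\Delta u^j$ has no $O(|\zeta|^2)$ term. The remaining first-order terms $-\nabla\log A\cdot\nabla u^j$ cancel in $D$ by the very definition of $\theta$. So $D$ is two orders below the scale your heuristic works at, and no ``explicit vectors'' control it. The fix is already in your Step 2 identity: $AD=-\rho\omega^2N$, so $|D|$ is bounded below iff $|N|$ is. A column operation then gives
\[
\det\begin{pmatrix} u^1&\cdots&u^{n+1}\\ \nabla u^1&\cdots&\nabla u^{n+1}\end{pmatrix}=(-1)^n\,N\,\det H .
\]
So the right nondegeneracy condition is linear independence of the $1$-jets $(u^j,\nabla u^j)$ together with $\det H\neq0$. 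Only $C^1$ control of the CGO remainder is needed for this.

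Second, your fallback to ``a compactness and covering argument that uses a different admissible family on different subregions'' would break the statement. Each patch needs its own $n+1$ solutions, so the total grows with the number of patches, and nothing you say keeps it $\le I_n$. The missing observation is that no covering is needed. Take $u^j=A^{-1/2}e^{\zeta_j\cdot x}(1+\psi_j)$ with $\zeta_j=\lambda\hat\zeta_j$, $\hat\zeta_j\cdot\hat\zeta_j=0$ and $\|\psi_j\|_{C^1(\bar\Omega)}=O(\lambda^{-1})$. Then, uniformly on $\bar\Omega$,
\[
\det\begin{pmatrix} u^1&\cdots&u^{n+1}\\ \nabla u^1&\cdots&\nabla u^{n+1}\end{pmatrix}
=\lambda^n\prod_{j=1}^{n+1}A^{-1/2}e^{\zeta_j\cdot x}
\Bigl(\det\begin{pmatrix}1&\cdots&1\\ \hat\zeta_1&\cdots&\hat\zeta_{n+1}\end{pmatrix}+O(\lambda^{-1})\Bigr),
\]
and the same holds for $\det H$ with $\det[\hat\zeta_1,\dots,\hat\zeta_n]$ in place of the $(n+1)\times(n+1)$ determinant. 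Since the solutions may be complex, $|e^{\zeta_j\cdot x}|=e^{\operatorname{Re}\zeta_j\cdot x}$ never vanishes. (In the real-valued theory coverings arise because real parts of CGO solutions have zeros.) Hence, for generic null directions such as $(1,i),(1,-i),(-1,i)$ when $n=2$, and $\lambda$ large, both determinants are bounded below on all of $\bar\Omega$. The traces of these $n+1$ solutions, and nearby data, therefore work globally, and the remaining $I_n-n-1$ boundary conditions are arbitrary. This needs $n\ge2$; the case $n=1$ requires a separate ODE argument.

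Finally, in Step 4, bounding $\theta=H^{-1}\nabla u^{n+1}$ also needs the lower bound on $|\det H|$, not just on $|D|$. Moreover, $f^j\mapsto\lambda f^j$ leaves $B$ unchanged while scaling the right-hand side. So the stated inequality can only hold with $D$ depending on these lower bounds, and you should say this explicitly. The meaningful estimate your formula yields is the stability bound for differences $B-\tilde B$.
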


\begin{remark}
The aforementioned Theorem \ref{th:inverse} implies that in one dimension, two pairs of boundary and interior datum guarantees unique recovery of $B$, while in two dimensions, five pairs of boundary and interior datum guarantees unique recovery of $B$. However, these are not optimal estimates as demonstrated in \cite[Section 4]{Bal2012}. Specifically, one isotropic coefficient $B(x)$ can be reconstructed from one measurement in any dimensions. 
\end{remark}

We next focus on the theoretical properties of the optimization problem \eqref{eq:cplxcostfn}. For this purpose, we define the following admissible set for $B$: 
\[B(x) \in U_{ad} = \left\{ B=B_1+iB_2\in L^2(\Omega;\mathbb{C}) : \|B\|_{L^2(\Omega)}\leq D,~ 1 + B_1(x) \geq a_0 >0 \right\}.
\]
\begin{proposition}\label{proposition3}
The objective functional $J$ given in \eqref{eq:cplxcostfn}
is sequentially weakly lower semicontinuous (w.l.s.c.) on $L^2(\Omega)$;
that is, if
$
B_k\rightharpoonup B
\quad\text{weakly in }L^2(\Omega),
$
then
\[
J(B)\leq\liminf_{k\to\infty}J(B_k).
\]
Moreover, $J$ is coercive on $L^2(\Omega)$, in the sense that
\[
\|B_k\|_{L^2(\Omega)}\to\infty
\quad\Longrightarrow\quad
J(B_k)\to\infty,
\]
and $J$ is Fr\'echet differentiable.
\end{proposition}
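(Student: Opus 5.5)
We view $J$ as the reduced functional $J(B)=\frac{\alpha}{2}\sum_{j}\|u^j(B)-u^{\delta,j}\|_{L^2}^2+\frac{\beta_1}{2}\|B_1\|_{L^2}^2+\frac{\beta_2}{2}\|B_2\|_{L^2}^2$, where $u^j(B)$ solves \eqref{eq:cplxPDE_multi}, and treat the three claims separately.

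Coercivity is immediate. The data term is nonnegative, so
\[
J(B)\geq \tfrac{1}{2}\min(\beta_1,\beta_2)\,\|B\|_{L^2(\Omega)}^2 ,
\]
and $\|B_k\|_{L^2}\to\infty$ forces $J(B_k)\to\infty$.

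For weak lower semicontinuity, the regularization terms are convex and continuous, hence weakly l.s.c. on $L^2$. The work is in the data term, and the key step is to show that $B\mapsto u^j(B)$ takes weakly convergent sequences $B_k\rightharpoonup B$ (in $U_{ad}$) to sequences with $u^j(B_k)\to u^j(B)$ strongly in $L^2$.

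The plan for this step is as follows.
\begin{itemize}
\item By Theorem~\ref{th:MREpdeexistence} the states $u_k=u^j(B_k)$ are bounded in $H^1$. Note that the constant $C_1$ depends on the coefficient, so a uniform bound requires $B_k$ bounded and $\mathrm{Re}\,A_k\geq a_0$; I would restrict to the admissible class, with an $L^\infty$ bound added if needed.
\item Rellich compactness and weak compactness give a subsequence with $u_k\rightharpoonup \tilde u$ in $H^1$ and $u_k\to\tilde u$ in $L^2$.
\item One then passes to the limit in the weak form $\int A_k\nabla u_k\cdot\nabla\varphi-\rho\omega^2\int u_k\varphi=0$ for smooth test functions $\varphi$.
\end{itemize}

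This limit passage is the main obstacle. The product $A_k\nabla u_k$ is a product of two merely weakly convergent factors, and weak $L^2$ convergence of coefficients does not in general commute with the PDE; this is the homogenization/H-convergence phenomenon. I would first try to obtain strong convergence of the coefficients. If that is unavailable, I would work in a strengthened admissible set (for example, $B$ bounded in $L^\infty$ with some compactness, such as a bound in $H^s$ or BV, or in the Lipschitz class of Theorem~\ref{th:inverse}). There weak $L^2$ convergence upgrades to strong $L^p$ convergence, and then $A_k\nabla u_k\rightharpoonup A\nabla\tilde u$ in $L^1$ against smooth test functions. Uniqueness in Theorem~\ref{th:MREpdeexistence} identifies $\tilde u=u^j(B)$, so the whole sequence converges, and the data term converges. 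Adding the l.s.c. regularization terms gives $J(B)\le\liminf J(B_k)$.

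For Fréchet differentiability, I would use the implicit function theorem. Define $e(u,B)=-\nabla\cdot((1+i+B)\nabla u)-\rho\omega^2u$ as a map $H^1_0$-lifted$\times L^\infty\to H^{-1}$. It is affine in $u$ and bilinear in $(B,u)$, hence $C^\infty$. Its $u$-derivative is the forward operator, which is an isomorphism by Theorem~\ref{th:MREpdeexistence}. So $B\mapsto u^j(B)$ is Fréchet differentiable, with derivative $\delta u$ solving
\[
\nabla\cdot(A\nabla\delta u)+\rho\omega^2\delta u=-\nabla\cdot(\delta B\,\nabla u).
\]
Composing with the smooth quadratic misfit gives differentiability of $J$ (in the $L^\infty$ topology, or in $L^2$ under the $H^2\subset W^{1,\infty}$-type regularity of Theorem~\ref{th:MREpdeexistence} in low dimension). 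Introducing the adjoint $z^j$ with $\nabla\cdot(\bar A\nabla z^j)+\rho\omega^2z^j=\alpha(u^j-u^{\delta,j})$, $z^j|_{\partial\Omega}=0$, then gives the gradient representation $\beta_1B_1+\mathrm{Re}\sum_j\nabla u^j\cdot\overline{\nabla z^j}$ and analogously for $B_2$, up to signs.
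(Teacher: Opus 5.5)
\textbf{Verdict: there is a genuine gap, but it sits in the statement itself, and the paper's proof does not close it either.}

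\textbf{Coercivity.} Your argument is the paper's and is fine.

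\textbf{Weak lower semicontinuity.} What your argument establishes is lower semicontinuity, indeed continuity, along sequences that converge \emph{strongly}. You get this by shrinking $U_{ad}$ to a class bounded in $L^\infty$ and in $H^s$ or BV. That is a different proposition from the stated one, where $B_k\rightharpoonup B$ only weakly in $L^2$. You correctly identify why weak convergence does not let you pass to the limit in $A_k\nabla u_k$.

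This obstruction cannot be removed, because the statement as written is false. Take $n=1$, $\Omega=(0,1)$, $I=1$, $B_{2,k}=0$ and $B_{1,k}(x)=M\chi(kx)$, where $\chi$ is the $1$-periodic indicator of $[0,\tfrac12)$.
Then $B_k\in U_{ad}\cap L^\infty$ for $M\le\sqrt2\,D$, and $B_k\rightharpoonup B:=M/2$.
Since $\mathrm{Im}\,A_k=1$, you get uniform $H^1$ bounds, and the flux $A_ku_k'$ is bounded in $H^1$. It follows that $u(B_k)\to u^{\mathrm h}$ in $L^2$, where $u^{\mathrm h}$ solves the problem with the constant harmonic mean $A^{\mathrm h}=\bigl(\tfrac12(1+i)^{-1}+\tfrac12(1+i+M)^{-1}\bigr)^{-1}$. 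This differs from $(1+i)+M/2$, so $u^{\mathrm h}\neq u(B)$ for nonzero boundary data.
Now choose $u^\delta=u^{\mathrm h}$. Then $J(B_k)\to\beta_1M^2/4$, while $J(B)=\tfrac{\alpha}{2}\|u(B)-u^{\mathrm h}\|_{L^2}^2+\beta_1M^2/8$. The latter is strictly larger once $\alpha/\beta_1$ is large enough; the experiments use $\alpha/\beta$ as large as $2\times10^{8}$.

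So a strengthening of $U_{ad}$ of the kind you propose is unavoidable. The same strengthening is needed in Theorem~\ref{th:existence}, whose uniform $H^2$ bound presupposes uniformly Lipschitz coefficients.

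\textbf{The paper's argument.} The paper's proof passes over this obstruction with an invalid step. It asserts that $J$ is norm continuous, so the sublevel sets $U_\eta$ are norm closed. It then concludes they are weakly sequentially closed because they lie in the weakly compact set $U_{ad}$. That implication requires convexity of $U_\eta$ (Mazur). Convexity fails here, since $J$ is nonconvex, as the paper itself remarks. For a concrete failure, the unit sphere inside the unit ball of $L^2$ is norm closed but not weakly sequentially closed. Norm continuity of $B\mapsto u(B)$ on $U_{ad}$, which carries no $L^\infty$ bound, is not justified either. Your diagnosis is therefore the right one, and your restricted-class argument is a correct proof of a corrected statement.

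\textbf{Fr\'echet differentiability.} The paper only cites Annunziato et al. Your implicit-function-theorem argument is the standard one and is correct for perturbations in $L^\infty$. Your suggested upgrade to $L^2$ does not work in the paper's two-dimensional setting. It needs $\delta B\,\nabla u\in L^2$ for $\delta B\in L^2$, i.e.\ $\nabla u\in L^\infty$, but $H^2\subset W^{1,\infty}$ holds only for $n=1$. For $n=2$ you would need $W^{2,p}$ regularity with $p>2$, which requires smoother coefficients and boundary data than $f\in H^{1/2}$. The defensible claim is differentiability with respect to the $L^\infty$ topology, with the gradient density given by your adjoint formula.
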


\begin{proof}
The coercivity property of $J$ follows from the presence of the terms $\|B_i\|_2$. For proving the w.l.s.c property, we first note that $J$ are not convex in $B$. However, $U_{ad}$ is convex and closed and also weakly sequentially compact.
This is because if $\{B^n\}\in U_{ad}$, then $\|B^n\|_{L^2}\leq D$ for all $n\in\mathbb{N}$. Thus, the sequence is uniformly bounded. Since $L^2(\Omega)$ is a Hilbert space, there is a  subsequence $\{B^{n_k}\}$ such that $B^{n_k}\rightharpoonup B\in L^2(\Omega)$. Also, we have that $\|B\|_{L^2}\leq \lim\inf\|B^n\|_{L^2}\leq D$. Thus, $B\in U_{ad}$. Next, we consider the set 
\[
U_{\eta} = \lbrace B \in U_{ad}: J(B) \leq \eta \rbrace.
\]
Since $J$ is continuous, the set $U_{\eta}$ is closed for all $\eta \in \mathbb{R}$. Thus, $U_{\eta}$ is a closed subset of a weakly sequentially compact space $U_{ad}$ and is weakly sequentially closed for all $\eta\in\mathbb{R}$ \cite{Beer1992}.
This implies that $J$ is weakly sequentially lower semi-continuous. Finally, the Fr\'echet differentiability of $J$ follows from similar arguments as given in \cite{Annunziato2013}. 
\end{proof}
We now state and prove the existence of a minimizer of $J$, given in \eqref{eq:cplxcostfn}.
\begin{theorem}\label{th:existence}
Let
$
f^j\in H^{1/2}(\partial\Omega),~ j=1,\ldots,I.
$
Denote $u^j(B)$ as the unique solution of \eqref{eq:cplxPDE_multi}, for every $B\in U_{ad}$ and each prescribed Dirichlet
boundary datum $f^j$, which satisfies the stability estimate
of Theorem~\ref{th:MREpdeexistence}. Let $J$ be the multiple-data objective functional defined in
\eqref{eq:cplxcostfn}. Then there exists
$B^*\in U_{ad}$
and an associated collection of states
$\mathbf{u}^*=(u^{1,*},\ldots,u^{I,*})$
such that each $u^{j,*}=u^j(B^*)$ solves
\eqref{eq:cplxPDE_multi} with boundary datum $f^j$, and
\[
J(\mathbf{u}^*,B^*)
=
\min_{B\in U_{ad}}J(\mathbf{u}(B),B),
\]
where
\[
\mathbf{u}(B)
=
(u^1(B),\ldots,u^I(B)).
\]
\end{theorem}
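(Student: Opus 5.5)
The plan is the direct method of the calculus of variations, applied to the reduced functional $\hat J(B):=J(\mathbf{u}(B),B)$ on $U_{ad}$. Since $J\geq 0$, the infimum $m:=\inf_{B\in U_{ad}}\hat J(B)$ is finite. We take a minimizing sequence $\{B_k\}\subset U_{ad}$ with $\hat J(B_k)\to m$. By definition of $U_{ad}$ we have $\|B_k\|_{L^2(\Omega)}\leq D$, which is also what the coercivity part of Proposition~\ref{proposition3} provides. Because $L^2(\Omega;\mathbb{C})$ is a Hilbert space, we can extract a subsequence, not relabelled, with $B_k\rightharpoonup B^*$ weakly in $L^2$.

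Next we check that $B^*\in U_{ad}$. The set $U_{ad}$ is convex: the norm ball is convex, and the pointwise constraint $1+B_1\geq a_0$ is linear in $B$. It is also strongly closed, since $L^2$ convergence gives a.e. convergence along a subsequence, and this preserves $1+B_1\geq a_0$. By Mazur's lemma, a convex strongly closed set is weakly closed, so $B^*\in U_{ad}$. We then set $u^{j,*}:=u^j(B^*)$, which is well defined by Theorem~\ref{th:MREpdeexistence}.

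The remaining step is lower semicontinuity, $\hat J(B^*)\leq\liminf_k \hat J(B_k)$. The regularization terms $\tfrac{\beta_i}{2}\|B_i\|_{L^2}^2$ are convex and continuous, hence weakly l.s.c. For the data-fit term, we would like to invoke Proposition~\ref{proposition3} directly, which asserts sequential w.l.s.c. of $J$. Then
\[
\hat J(B^*)\leq\liminf_{k\to\infty}\hat J(B_k)=m,
\]
so $B^*$ is a minimizer and the statement follows with $\mathbf{u}^*=\mathbf{u}(B^*)$.

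The main obstacle is justifying this w.l.s.c. at the level of the states, because $B\mapsto u^j(B)$ is nonlinear. If we want more than a citation of Proposition~\ref{proposition3}, the fallback argument runs as follows.

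\emph{Uniform bounds.} Theorem~\ref{th:MREpdeexistence} gives $\|u^j(B_k)\|_{H^1}\leq C_1\|f^j\|_{H^{1/2}}$ uniformly in $k$, since $C_1$ depends only on $\|B_k\|_2\leq D$.

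\emph{Compactness.} Extract $u^j(B_k)\rightharpoonup w^j$ in $H^1$, and hence $u^j(B_k)\to w^j$ strongly in $L^2$ by Rellich, with the trace $w^j=f^j$ preserved.

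\emph{Identification.} Show $w^j=u^j(B^*)$ by passing to the limit in the weak form $\int_\Omega A_k\nabla u_k\cdot\nabla\phi-\rho\omega^2u_k\phi=0$.

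The delicate point is the product $B_k\nabla u_k$ of two merely weakly convergent factors. To handle it, we would first try the weaker statement obtained by testing with $\phi\in C_c^\infty$. If that does not close, we would either use a uniform $L^\infty$ bound on $B$, so that $B_k\rightharpoonup^* B^*$ in $L^\infty$, together with a compensated-compactness or $H$-convergence argument, or else assume strong convergence of $\nabla u_k$. Once $w^j=u^j(B^*)$ is established, strong $L^2$ convergence of the states makes the data-fit term continuous along the sequence, and the weak l.s.c. of the Tikhonov terms completes the argument.
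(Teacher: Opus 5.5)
Your skeleton (minimizing sequence, weak compactness in $L^2$, weak closedness of $U_{ad}$ via convexity and Mazur, identification of the limit states, lower semicontinuity) is the same as the paper's. There is, however, a genuine gap at exactly the step you flagged as delicate: you never establish $w^j=u^j(B^*)$.

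Invoking Proposition~\ref{proposition3} is formally admissible but only relocates the difficulty. For the nonconvex reduced functional $B\mapsto J(\mathbf{u}(B),B)$, the only nontrivial content of weak lower semicontinuity is the data-fit term, i.e.\ that the states of $B_k$ converge to those of $B^*$. Moreover, the paper's argument for that proposition (a strongly closed sublevel set inside the weakly compact $U_{ad}$ is weakly closed) does not hold without convexity. The paper's proof of Theorem~\ref{th:existence} does not rely on it for the identification.

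Your fallback stalls because it uses only the $H^1$ estimate, which leaves a product of two weakly convergent factors $B_k$ and $\nabla u_k$. Testing against $\phi\in C_c^\infty(\Omega)$ is what the paper does too, but by itself it does not resolve a weak-times-weak product. The compensated-compactness/$H$-convergence option cannot give the theorem as stated: with coefficients converging only weakly-$*$, the limit state solves the equation with the $H$-limit coefficient, which in general is not $(1+i)+B^*$. Already in one dimension it is the harmonic rather than the arithmetic mean of oscillating coefficients. So at the $H^1$ level the identification is not merely unproved but unavailable in general.

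The missing idea is the second half of Theorem~\ref{th:MREpdeexistence}, which the paper uses. The states $u^{j,m}$ are bounded in $H^2(\Omega)$, so by Rellich--Kondrachov ($H^2(\Omega)\Subset H^1(\Omega)$) a subsequence converges strongly in $H^1$; in particular $\nabla u^{j,m}\to\nabla u^{j,*}$ strongly in $L^2$. One then splits $\int_\Omega B^m\nabla u^{j,m}\cdot\nabla v\,dx-\int_\Omega B^*\nabla u^{j,*}\cdot\nabla v\,dx$ into two terms:
\begin{itemize}
\item $\int_\Omega B^m(\nabla u^{j,m}-\nabla u^{j,*})\cdot\nabla v\,dx$, which tends to zero by Cauchy--Schwarz since $\|B^m\|_{L^2}\le D$ and $\nabla v\in L^\infty$;
\item $\int_\Omega (B^m-B^*)\nabla u^{j,*}\cdot\nabla v\,dx$, which tends to zero by weak convergence since $\nabla u^{j,*}\cdot\nabla v\in L^2$.
\end{itemize}
The boundary datum passes to the limit by continuity of the trace. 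This is your third option, strong convergence of $\nabla u_k$, but derived rather than assumed. After it, strong $L^2$ convergence of the states plus weak l.s.c.\ of the Tikhonov terms closes the argument exactly as you describe.

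Note that this step silently imports the hypotheses of the $H^2$ part of Theorem~\ref{th:MREpdeexistence} ($B$ uniformly Lipschitz, $\partial\Omega\in C^2$), which $U_{ad}$ does not impose. To make your write-up airtight, add a uniform Lipschitz bound to $U_{ad}$. The set stays convex and weakly closed, the $H^2$ estimate then holds with a constant uniform along the sequence, and Arzel\`a--Ascoli would even give uniform convergence of $B^m$.
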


\begin{proof}
Since all terms in the objective functional are nonnegative,
\[
J(\mathbf{u}(B),B)\geq 0,
\]
and hence the infimum of $J$ over $U_{ad}$ is finite. Let
$\{B^m\}\subset U_{ad}$ be a minimizing sequence; that is,
\[
\lim_{m\to\infty}
J(\mathbf{u}(B^m),B^m)
=
\inf_{B\in U_{ad}}
J(\mathbf{u}(B),B).
\]
By the definition of $U_{ad}$,
\[
\|B^m\|_{L^2(\Omega)}\leq D
\]
for every $m$, and therefore $\{B^m\}$ is bounded in
$L^2(\Omega;\mathbb{C})$. Since $L^2(\Omega;\mathbb{C})$ is a
Hilbert space, there exists a subsequence, again denoted by
$\{B^m\}$, and some $B^*\in L^2(\Omega;\mathbb{C})$ such that
\[
B^m\rightharpoonup B^*
\qquad\text{weakly in }L^2(\Omega;\mathbb{C}).
\]
Moreover, $U_{ad}$ is closed and convex and is therefore weakly
closed. Consequently,
\[
B^*\in U_{ad}.
\]
For each $j=1,\ldots,I$, let
\[
u^{j,m}=u^j(B^m)
\]
denote the solution of \eqref{eq:cplxPDE_multi} corresponding to
$B^m$ and the fixed boundary datum $f^j$. By the stability estimate
of Theorem~\ref{th:MREpdeexistence}, the sequence
$\{u^{j,m}\}_m$ is bounded in $H^2(\Omega)$. Since $I$ is finite, a
diagonal subsequence may be selected such that, for every
$j=1,\ldots,I$,
\[
u^{j,m}\rightharpoonup u^{j,*}
\qquad\text{weakly in }H^2(\Omega).
\]
By the Rellich--Kondrachov compactness theorem,
\[
H^2(\Omega)\Subset H^1(\Omega),
\]
under the regularity assumptions on $\Omega$, and hence
\[
u^{j,m}\to u^{j,*}
\qquad\text{strongly in }H^1(\Omega).
\]
In particular,
\[
u^{j,m}\to u^{j,*}
\quad\text{and}\quad
\nabla u^{j,m}\to\nabla u^{j,*}
\qquad\text{strongly in }L^2(\Omega).
\]
We next show that $u^{j,*}$ satisfies the state equation corresponding
to $B^*$. For any $v\in C_c^\infty(\Omega)$, the weak formulation
satisfied by $u^{j,m}$ is
\[
\int_\Omega
\big((1+i)+B^m\big)
\nabla u^{j,m}\cdot\nabla v\,dx
-
\int_\Omega
\rho\omega^2u^{j,m}v\,dx
=0.
\]
For the second term, strong convergence in $L^2(\Omega)$ gives
\[
\int_\Omega
\rho\omega^2u^{j,m}v\,dx
\longrightarrow
\int_\Omega
\rho\omega^2u^{j,*}v\,dx.
\]
For the coefficient-dependent term, write
\[
\begin{aligned}
&
\int_\Omega
B^m\nabla u^{j,m}\cdot\nabla v\,dx
-
\int_\Omega
B^*\nabla u^{j,*}\cdot\nabla v\,dx
\\
&\quad =
\int_\Omega
B^m
\big(\nabla u^{j,m}-\nabla u^{j,*}\big)
\cdot\nabla v\,dx
+
\int_\Omega
(B^m-B^*)
\nabla u^{j,*}\cdot\nabla v\,dx.
\end{aligned}
\]
Since $\{B^m\}$ is bounded in $L^2(\Omega)$,
$\nabla u^{j,m}\to\nabla u^{j,*}$ strongly in $L^2(\Omega)$, and
$\nabla v\in L^\infty(\Omega)$, the first term on the right tends to
zero. Furthermore,
\[
\nabla u^{j,*}\cdot\nabla v\in L^2(\Omega),
\]
so the weak convergence
$B^m\rightharpoonup B^*$ in $L^2(\Omega)$ implies that the second
term also tends to zero. Consequently,
\[
\int_\Omega
B^m\nabla u^{j,m}\cdot\nabla v\,dx
\longrightarrow
\int_\Omega
B^*\nabla u^{j,*}\cdot\nabla v\,dx.
\]
Together with the strong convergence of $\nabla u^{j,m}$, this yields
\[
\int_\Omega
\big((1+i)+B^*\big)
\nabla u^{j,*}\cdot\nabla v\,dx
-
\int_\Omega
\rho\omega^2u^{j,*}v\,dx
=0
\]
for every $v\in C_c^\infty(\Omega)$.

Now, the trace operator is continuous from $H^1(\Omega)$ to
$H^{1/2}(\partial\Omega)$. Since
\[
u^{j,m}=f^j
\qquad\text{on }\partial\Omega
\]
for every $m$ and
$u^{j,m}\to u^{j,*}$ in $H^1(\Omega)$, it follows that
\[
u^{j,*}=f^j
\qquad\text{on }\partial\Omega.
\]
Hence $u^{j,*}$ is the solution of the state equation corresponding
to $B^*$ and the boundary datum $f^j$, for each $j=1,\ldots,I$.

Finally, by the sequential weak lower semicontinuity of the objective
functional,
\[
\begin{aligned}
J(\mathbf{u}^*,B^*)
&\leq
\liminf_{m\to\infty}
J(\mathbf{u}(B^m),B^m)
\\
&=
\inf_{B\in U_{ad}}
J(\mathbf{u}(B),B).
\end{aligned}
\]
Since $B^*\in U_{ad}$, the reverse inequality follows from the
definition of the infimum. Therefore,
\[
J(\mathbf{u}^*,B^*)
=
\min_{B\in U_{ad}}
J(\mathbf{u}(B),B),
\]
and $(\mathbf{u}^*,B^*)$ is a minimization pair.
\end{proof}

\subsection{Optimality conditions}
We now characterize the solutions of the minimization problem \eqref{eq:cplxcostfn}-\eqref{eq:cplxPDE_multi}, with $I=2$, using the first order necessary conditions for optimality. For this purpose, we note that one can rewrite \eqref{eq:cplxcostfn}-\eqref{eq:cplxPDE_multi} as follows:
\begin{equation}\label{eq:costfn}
\begin{aligned}
\min_{B_1, B_2 \in L^2(\Omega)} J(u_1, u_2, B_1, B_2):= &\frac{\alpha}{2} \int_\Omega \left| u_1(x) - u_1^\delta(x) \right|^2 + \left| u_2(x) - u_2^\delta(x) \right|^2 \, dx\\ 
 +& \frac{\beta_1}{2} \int_\Omega \left| B_1(x) \right|^2 \, dx 
+ \frac{\beta_2}{2} \int_\Omega \left| B_2(x) \right|^2 \, dx
\end{aligned}
\end{equation}
subject to the system of PDEs:
\begin{equation}\label{eq:PDE}
\begin{aligned}
\nabla\cdot [(1+B_1(x))\nabla u_1-(1+B_2(x))\nabla u_2] + \rho \omega^2 u_1 = 0, &\quad \text{in}\hspace{2mm} \Omega,\\
\nabla\cdot [(1+B_1(x))\nabla u_2+(1+B_2(x))\nabla u_1] + \rho \omega^2 u_2 = 0, &\quad \text{in}\hspace{2mm} \Omega,\\
u_1 = f_1, &\quad \text{on}\hspace{2mm} \partial\Omega,\\
u_2 = f_2, &\quad \text{on}\hspace{2mm} \partial\Omega.
\end{aligned}
\end{equation} 
The system of PDEs is obtained by expanding the complex valued PDE \eqref{eq:cplxPDE_multi} and separating out the real and the imaginary parts. This formulation is independent of any complex numbers and we now show the equivalence of the optimality system obtained by considering the complex form and system of PDEs form of the state equations, given in \eqref{eq:cplxPDE_multi} and \eqref{eq:PDE}.
\begin{theorem}

The first order optimality system corresponding to the solution of \eqref{eq:cplxcostfn}-\eqref{eq:cplxPDE_multi} is the same as the first order optimality system corresponding to the solution of \eqref{eq:costfn}-\eqref{eq:PDE}.
\end{theorem}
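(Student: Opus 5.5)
The plan is to derive both optimality systems by the Lagrangian method and show that they agree once real and imaginary parts are separated. Note first that in \eqref{eq:costfn}--\eqref{eq:PDE} the subscripts $1,2$ on $u$ denote the real and imaginary parts of a single complex state $u=u_1+iu_2$. Accordingly, I will take a single complex state, $I=1$ in \eqref{eq:cplxPDE_multi}; the argument extends termwise to several data sets. Since $J$ depends on $B$ only through $B_1,B_2$, I will treat the complex problem as an optimization over the real unknowns $(B_1,B_2)$ and use the real inner product $\mathrm{Re}\int_\Omega a\,\bar b\,dx$ on $L^2(\Omega;\mathbb{C})$. Under this inner product, $L^2(\Omega;\mathbb{C})$ is isometric to $L^2(\Omega;\mathbb{R})^2$ via $a\mapsto(\mathrm{Re}\,a,\mathrm{Im}\,a)$. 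This isometry is the structural reason the two systems should coincide.

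\emph{Complex formulation.} I will form the real-valued Lagrangian
\[
\mathcal{L}(u,\lambda,B_1,B_2)=J(u,B_1,B_2)+\mathrm{Re}\int_\Omega\Big(-A\nabla u\cdot\nabla\bar\lambda+\rho\omega^2u\bar\lambda\Big)\,dx,
\]
with complex adjoint $\lambda\in H^1_0(\Omega;\mathbb{C})$ and $A=(1+i)+B_1+iB_2$. Differentiating in the direction $\delta\lambda$ returns the weak form of \eqref{eq:cplxPDE_multi}, because $\mathrm{Re}\int z\,\overline{\delta\lambda}=0$ for all $\delta\lambda$ forces $z=0$. Differentiating in the direction $\delta u\in H^1_0$ and integrating by parts gives the adjoint equation
\[
\nabla\cdot(\bar A\nabla\lambda)+\rho\omega^2\lambda=-\alpha(u-u^\delta),\qquad \lambda=0\ \text{on }\partial\Omega.
\]
Differentiating in $B_1$ and $B_2$ gives the gradient conditions
\[
\beta_1B_1-\mathrm{Re}(\nabla u\cdot\nabla\bar\lambda)=0,\qquad
\beta_2B_2-\mathrm{Re}(i\nabla u\cdot\nabla\bar\lambda)=0.
\]
These should be understood as variational inequalities if the constraints of $U_{ad}$ are active; the same modification applies to both formulations.

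\emph{Real formulation.} For \eqref{eq:costfn}--\eqref{eq:PDE} I will introduce two real adjoints $\lambda_1,\lambda_2\in H^1_0(\Omega)$, one multiplying each real equation, and form the standard real Lagrangian. Taking variations in $u_1$, $u_2$, $B_1$ and $B_2$ yields a coupled real adjoint system together with two gradient equations.

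\emph{Identification.} The key step is to substitute $u=u_1+iu_2$ and $\lambda=\lambda_1+i\lambda_2$ and expand the complex Lagrangian. The identity $\mathrm{Re}(z\bar w)=z_1w_1+z_2w_2$ shows that the complex Lagrangian equals the real Lagrangian exactly. The complex constraint term expands to
\[
-\int\big[(1+B_1)\nabla u_1-(1+B_2)\nabla u_2\big]\cdot\nabla\lambda_1-\int\big[(1+B_1)\nabla u_2+(1+B_2)\nabla u_1\big]\cdot\nabla\lambda_2+\rho\omega^2(u_1\lambda_1+u_2\lambda_2).
\]
This is precisely the weak form of \eqref{eq:PDE} paired with $(\lambda_1,\lambda_2)$. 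Since the two Lagrangians are the same functional of identified variables, their stationarity conditions coincide. The complex directions $\delta u$ and $\delta\lambda$ correspond to the pairs $(\delta u_1,\delta u_2)$ and $(\delta\lambda_1,\delta\lambda_2)$, and $\mathrm{Re}$-pairing against all complex directions is equivalent to pairing against both real components. Concretely, the conjugated coefficient $\bar A$ in the complex adjoint equation reproduces the transposed coupling in the real adjoint system: the sign on the $(1+B_2)$ terms flips. The gradients also match, since $\mathrm{Re}(\nabla u\cdot\nabla\bar\lambda)=\nabla u_1\cdot\nabla\lambda_1+\nabla u_2\cdot\nabla\lambda_2$ and $\mathrm{Re}(i\nabla u\cdot\nabla\bar\lambda)=\nabla u_1\cdot\nabla\lambda_2-\nabla u_2\cdot\nabla\lambda_1$.

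I expect the main obstacle to be handling complex differentiation correctly. The functional $J$ is not holomorphic, so the complex problem has to be read as a real-differentiable problem in $(\mathrm{Re}\,u,\mathrm{Im}\,u)$, equivalently via Wirtinger derivatives. The argument also depends on choosing the conjugate-linear adjoint pairing so that the real parts match term by term. With a bilinear pairing the adjoint would carry $A$ instead of $\bar A$, and the identification with the real system would fail. Once this convention is fixed, the remaining sign bookkeeping is routine.
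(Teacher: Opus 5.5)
Your proposal is correct and follows essentially the same route as the paper: form a real-valued Lagrangian from $J$ plus the real part of a complex pairing of the state equation with a complex adjoint, observe that after writing $u=u_1+iu_2$ and $\lambda=\lambda_1+i\lambda_2$ it is literally the real Lagrangian \eqref{eq:systlagr} of \eqref{eq:costfn}--\eqref{eq:PDE}, and conclude that the stationarity conditions coincide. Your explicit formulas are more careful than the printed ones: your gradients agree with \eqref{eq:cplxopti}, and your adjoint $\nabla\cdot(\bar A\nabla\lambda)+\rho\omega^2\lambda=-\alpha(u-u^\delta)$ has $+(1+B_2)\nabla\lambda_2$ in the first real equation and $-(1+B_2)\nabla\lambda_1$ in the second, which is what differentiating \eqref{eq:systlagr} actually gives, whereas \eqref{eq:cplxadjPDE} as printed repeats the coupling signs of the forward operator, so you should not change your signs to match it.
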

\begin{proof}
Since the cost functional \( J(u, B_1, B_2) \) is real-valued, and optimization must be performed over real-valued functionals, we define the Lagrangian as  \begin{equation}\label{eq: cplxlagr}
\mathcal{L} = J(u, B_1, B_2) + \operatorname{Re}\left\{ \langle F(u, A), \bar{q} \rangle_C \right\} 
\end{equation}   
where \(F(u,B) = F_1(u,B) + iF_2(u,B)\) denotes the solution of the complex-valued PDE \eqref{eq:cplxPDE_multi}, \( q = q_1 + iq_2 \) is the corresponding complex-valued adjoint variable, and $\langle \cdot,\cdot \rangle_C$ is the complex inner product in $L^2(\Omega)$. Simplifying \eqref{eq: cplxlagr}, we obtain:
\begin{equation}\label{eq:cplxlagr2}
\mathcal{L} = J(u, B_1, B_2) +  \langle F_1(u, B), q_1 \rangle + \langle F_2(u, B), q_2 \rangle,
\end{equation}
where $\langle \cdot,\cdot \rangle$ is the standard inner product in $L^2(\Omega)$.
Using the Gatueaux derivative, the adjoint partial differential equations are given as:
\begin{equation}\label{eq:cplxadjPDE}
\left\{
\begin{aligned}
\nabla \cdot \left( (1+B_1)\nabla q_1 - (1+B_2) \nabla q_2 \right) + \rho\omega^2 q_1 &= -\alpha(u_1 - u_1^\delta), \\[1.5ex]
\nabla \cdot \left( (1+B_1)\nabla q_2 + (1+B_2) \nabla q_1 \right) + \rho\omega^2 q_2 &= -\alpha(u_2 - u_2^\delta),
\end{aligned}
\right.
\end{equation}
for all \(q_1,\, q_2 \in H_f^1(\Omega)\).
In a similar way, the optimality conditions are the given as:
\begin{equation}\label{eq:cplxopti}
\left\{
\begin{aligned}
\dfrac{\partial \mathcal{L}}{\partial B_1} &= \beta_1B_1 - \nabla u_1\cdot\nabla q_1 - \nabla u_2\cdot\nabla q_2, \\[1.5ex]
\dfrac{\partial \mathcal{L}}{\partial B_2}&= \beta_2B_2 + \nabla u_2\cdot\nabla q_1 - \nabla u_1\cdot\nabla q_2,
\end{aligned}
\right.
\end{equation}
On the other hand, if we separate \(F(u,B)\) into its real and imaginary parts and combine them in a system of PDEs, then the Lagrangian we obtain \(\mathcal{L}\) is given by:
\begin{equation}\label{eq:systlagr}
\mathcal{L} = J(u, B_1, B_2) +  \langle F_1(u, B), q_1 \rangle + \langle F_2(u, B), q_2 \rangle.
\end{equation}
As the Lagrangian obtained in \eqref{eq:systlagr} is identical to equation \eqref{eq:cplxlagr2}, therefore the adjoint equations \(\dfrac{\partial \mathcal{L}}{\partial u_1}\) and \(\dfrac{\partial \mathcal{L}}{\partial u_2}\) obtained are same as \eqref{eq: cplxlagr}. Similarly, the optimality conditions in this case are same as \eqref{eq:cplxopti}.
\end{proof}

\subsection{Numerical method}
To implement the optimality system \eqref{eq:PDE}, \eqref{eq:cplxadjPDE}, and \eqref{eq:cplxopti}, we implement a non-linear conjugate gradient (NCG) method with Armijo line search, an extension of the classical conjugate gradient algorithm designed for nonlinear optimization problems. A key advantage of NCG over Newton-type schemes is its computational efficiency. Although Newton’s method can converge more rapidly, forming and inverting the Hessian is expensive and often prohibitive, especially in high-dimensional problems or when the sample size \(n\) is large. In contrast, NCG requires only gradient evaluations, which leads to significantly faster overall convergence in these settings.

We begin the algorithm with an initial guess \(\mathbf{B}_0\) for the perturbation function vector. The first search direction is taken to be the negative gradient of $J$ with respect to $B$
\[
\textbf{d}_0 = \textbf{g}_0 
= -\nabla_{\mathbf{B}} J(\mathbf{B}_0).
\]
For subsequent iterations (\(k = 0,1,2,\ldots\)), the search directions are defined recursively by
\[
\mathbf{d}_k = -\nabla J(\mathbf{B}_k) + {\gamma}_{k-1} \mathbf{d}_{k-1},
\]
where ${\gamma}_{k-1}$ is a scalar computed using the formula of Dai–Yuan and is given as follows:
\[
\gamma_i = \frac{\langle \textbf{g}_k, \textbf{g}_k \rangle}{\langle \mathbf{d}_{k-1}, \textbf{g}_k - \textbf{g}_{k-1} \rangle},
\]
where $\textbf{g}_k$ denotes the the gradient \(\dfrac{\partial \mathcal{L}}{\partial \textbf{B}}\), at iteration $k$, and $\mathbf{d}_{k-1}$ is the previous search direction. The update of $\mathbf{B}$ at each iteration follows a steepest-descent step,
\[
\mathbf{B}_{k+1} = \mathbf{B}_k + M_k \textbf{d}_k
\]
where the steplength \(M_k > 0\) is chosen via a line-search procedure that enforces the Armijo sufficient-decrease condition:
\[
J(\mathbf{B}_k + M_k\mathbf{d}_{k})\leq J(\mathbf{B}_k) + M_k\delta\langle\nabla_{\mathbf{B}} J(\mathbf{B}_{k}), \mathbf{d}_k\rangle 
= J(\mathbf{B}_k) - M_k\delta\lVert\nabla_{\mathbf{B}}J(\mathbf{B}_{k})\rVert^2
\]
for $\delta \in (0,0.5)$.
The NCG algorithm terminates when the relative discrete \(l^2\)-norm between successive iterates falls below a prescribed tolerance or when a maximum number of iterations is reached. 

\subsection{Reconstruction of shear modulus function using convolutional neural networks}
\label{sec-CNN}
Convolutional neural networks (CNNs) are effective at reconstructing parameters of ill-posed PDE-constrained inverse problems. CNNs inherently perform a form of regularization through their architecture and training process. Regularization in optimization-based methods is explicitly added as penalty terms like $L^2-$norm, $L^1-$norm or Tikhonov regularization where these terms often rely on predefined assumptions about the solution such as smoothness or sparsity which may not entirely capture the intricacies of the actual problem. Consequently, these methods struggle when the solution does not conform to these predefined priors. Since CNNs are data-driven in nature that learn the relationship between inputs and outputs directly from the data, this allows them to capture complex problem-specific priors that are difficult to define explicitly in traditional methods. CNNs excel at handling nonlinear mappings through the combination of non-linear activation functions and hierarchical feature extraction, enabling them to model complex, nonlinear relationships between inputs and outputs. In a CNN model, each convolutional layer applies a convolution, that is, a linear operation, followed by a nonlinear activation function like Rectified Linear Unit (RelU) in our case. This combination of successive convolutional layers with nonlinear activations allow CNNs to approximate highly complex, nonlinear functions.

\section{Reconstruction of complex-valued shear modulus function in two dimensions}

In this section, we investigate the reconstruction of spatially varying complex-valued shear modulus functions in two dimensions. We consider the scalar modified Stokes model
\begin{equation}
    \nabla\cdot\left(A(x,y)\nabla u(x,y)\right)
    +\rho\omega^2 u(x,y)=0,
    \qquad (x,y)\in\Omega,
    \label{eq:2D_forward}
\end{equation}
where
$u$ denotes the complex-valued displacement field and $A$ is the complex-valued shear modulus. 

The coefficient is written as
\begin{equation}
    A(x,y)
    =
    (1+i)+B(x,y),
    \qquad
    B(x,y)=B_1(x,y)+iB_2(x,y),
    \label{eq:2D_complex_coeff}
\end{equation}
where $B_1$ and $B_2$ describe spatial perturbations of the real and imaginary parts of the homogeneous background coefficient. To increase the diversity of the coefficient class, we consider two different families of perturbations: Gaussian inclusions and  mollified elliptical disc inclusions.

\subsection{Coefficient models}

\subsubsection*{Gaussian inclusions}

A Gaussian perturbation is defined by
\begin{equation}
    G(x,y;c_x,c_y,\sigma,a)
    =
    a\exp\left(
    -\frac{(x-c_x)^2+(y-c_y)^2}{2\sigma^2}
    \right),
    \label{eq:2D_gaussian}
\end{equation}
where $(c_x,c_y)$ is the center, $\sigma$ determines the spatial spread and $a$ is the amplitude.

For a Gaussian sample, the real and imaginary perturbations are generated independently as
\[
B_1(x,y)
=
G(x,y;c_x^{(r)},c_y^{(r)},\sigma_r,a_r),
\]
and
\[
B_2(x,y)
=
G(x,y;c_x^{(i)},c_y^{(i)},\sigma_i,a_i).
\]

\subsubsection*{Mollified elliptical disc-shaped inclusions}

To introduce non-Gaussian and anisotropic structures, we also consider smoothly mollified elliptical discs. For a center $(c_x,c_y)$ and rotation angle $\theta$, define the rotated coordinates
\begin{align}
    \widetilde{x}
    &=
    \cos\theta\,(x-c_x)
    +
    \sin\theta\,(y-c_y),
    \\
    \widetilde{y}
    &=
    -\sin\theta\,(x-c_x)
    +
    \cos\theta\,(y-c_y).
\end{align}
The corresponding elliptical radius is
\begin{equation}
    r_E(x,y)
    =
    \sqrt{
    \left(\frac{\widetilde{x}}{r_x}\right)^2
    +
    \left(\frac{\widetilde{y}}{r_y}\right)^2
    },
\end{equation}
where $r_x$ and $r_y$ denote the two semi-axis lengths.

We define the mollified elliptical disc by
\begin{equation}
    E(x,y)
    =
    \frac{a}{2}
    \left[
    1-
    \tanh\left(
    \frac{r_E(x,y)-1}{\varepsilon}
    \right)
    \right],
    \label{eq:mollified_ellipse}
\end{equation}
where $a$ denotes the amplitude and $\varepsilon>0$ controls the width of the smooth transition across the elliptical boundary. As in the Gaussian case, the parameters of the real and imaginary components are generated independently.

The mollified elliptical disc provides a coefficient profile with a relatively flat interior and a localized smooth transition near its boundary. It therefore represents a qualitatively different spatial structure from the Gaussian inclusions used in the same training set.

\subsection{Forward problem and numerical discretization}

The computational domain $\Omega=[0,2]\times [0,2]$ is discretized uniformly using $47$ intervals in each spatial direction, resulting in a $48\times48$ grid along both the x and y-axes. The material density, $\rho$, was set to 1 $kg/m^3$ and the angular frequency, $\omega$, was fixed at 1$Hz$.

Equation~\eqref{eq:2D_forward} is discretized using a central finite-difference scheme. In particular, the coefficient is evaluated at cell interfaces by arithmetic averaging. For example,
\[
A_{i+\frac12,j}
=
\frac{A_{i,j}+A_{i+1,j}}{2},
\qquad
A_{i,j+\frac12}
=
\frac{A_{i,j}+A_{i,j+1}}{2}.
\]
The resulting sparse linear system is solved to obtain the complex-valued displacement field $u$ associated with each realization of $A$.

Complex-valued Dirichlet data are imposed on all four boundaries. With $\omega=1$, the boundary values used in the data-generation experiments are
\begin{align}
u(0,y)
&=
i e^{-i2\pi\omega y},
&
u(2,y)
&=
i e^{-i2\pi\omega y},
\\
u(x,0)
&=
i e^{-i2\pi\omega x},
&
u(x,2)
&=
i e^{-i2\pi\omega x}.
\end{align}
In physical terms, these oscillatory boundary excitations model mechanical actuators vibrating the tissue edges at angular frequency \(\omega=1\), generating shear waves propagating through viscoelastic liver tissues. 

Representative examples of the two coefficient families used in the numerical experiments are shown in Figures~\ref{fig:2D_gsn_A}--\ref{fig:2D_elps_u}. 
Figures~\ref{fig:2D_gsn_A} and~\ref{fig:2D_gsn_u} show a complex-valued shear modulus containing Gaussian perturbations and the corresponding displacement field obtained from the forward PDE, respectively. 
Similarly, Figures~\ref{fig:2D_elps_A} and~\ref{fig:2D_elps_u} show a complex-valued shear modulus containing  mollified elliptical disc-shaped perturbations and its corresponding displacement field.

\begin{figure}[H]
\centering
\includegraphics[width=0.6\linewidth]{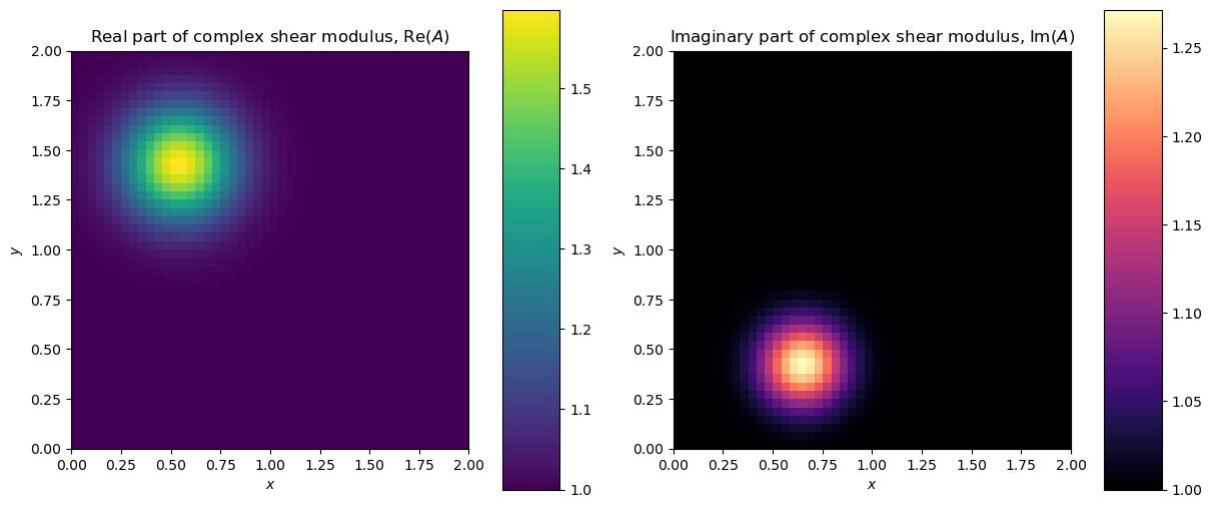}
\caption{Representative complex-valued shear modulus \(A(x,y)\) containing Gaussian perturbations in its real and imaginary components.}
\label{fig:2D_gsn_A}
\end{figure}

\begin{figure}[H]
\centering
\includegraphics[width=0.6\linewidth]{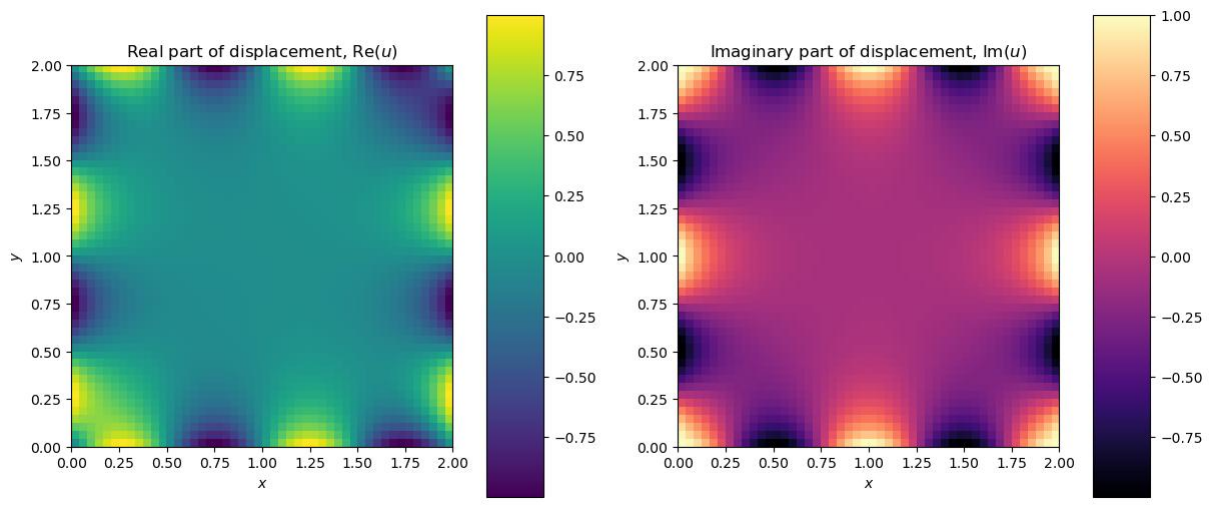}
\caption{Real and imaginary parts of the displacement field \(u(x,y)\) obtained by solving the forward PDE for the Gaussian shear modulus shown in Figure~\ref{fig:2D_gsn_A}.}
\label{fig:2D_gsn_u}
\end{figure}

\begin{figure}[H]
\centering
\includegraphics[width=0.6\linewidth]{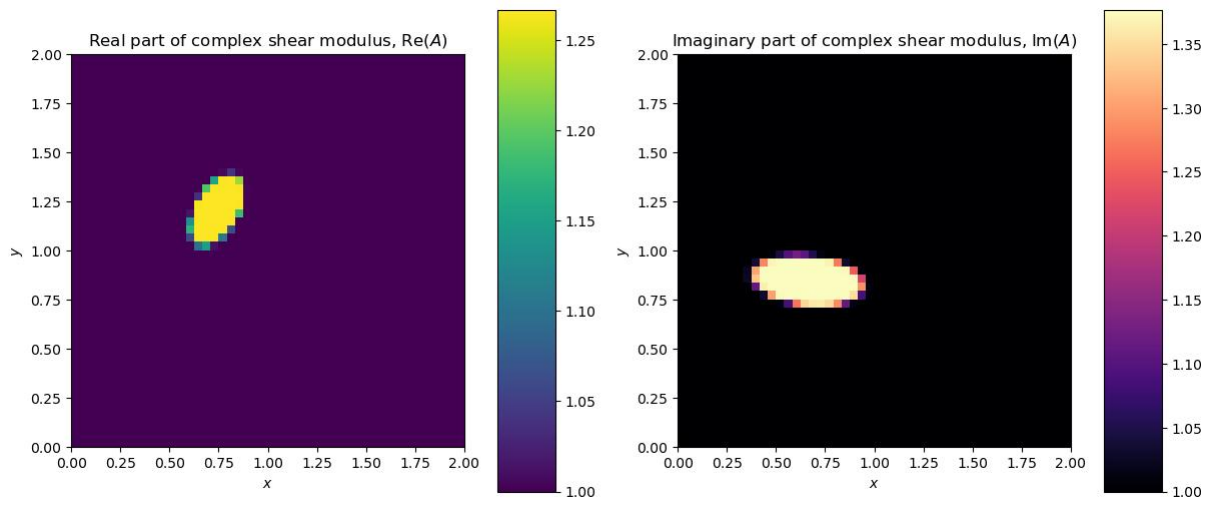}
\caption{Representative complex-valued shear modulus \(A(x,y)\) containing  mollified elliptical disc-shaped perturbations in its real and imaginary components.}
\label{fig:2D_elps_A}
\end{figure}

\begin{figure}[H]
\centering
\includegraphics[width=0.6\linewidth]{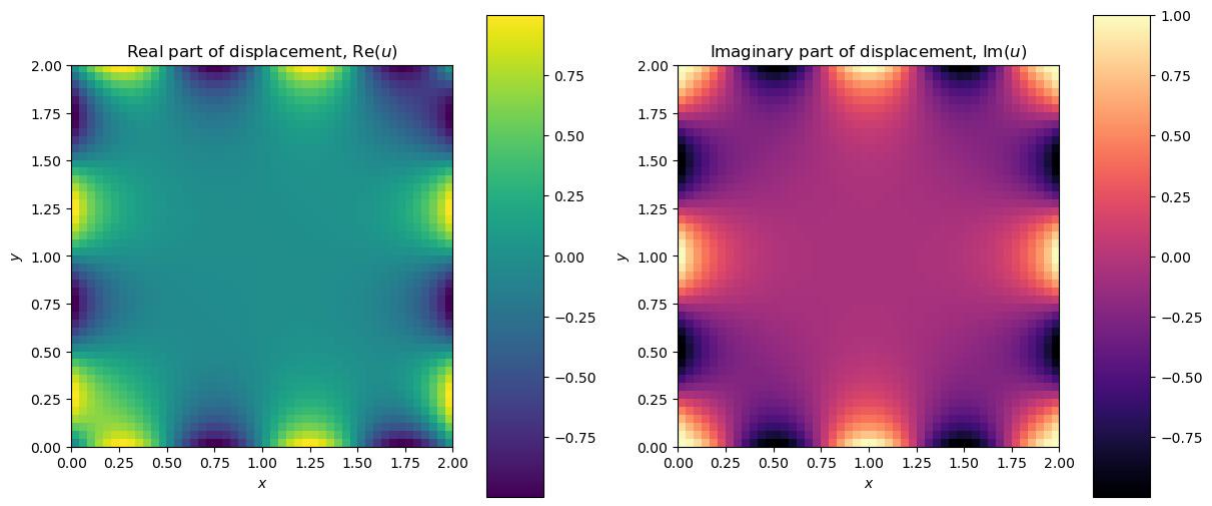}
\caption{Real and imaginary parts of the displacement field \(u(x,y)\) obtained by solving the forward PDE for the  mollified elliptical disc-shaped shear modulus shown in Figure~\ref{fig:2D_elps_A}.}
\label{fig:2D_elps_u}
\end{figure}
For every generated coefficient $A$, the corresponding solution $u$ of the forward problem is stored and used as the input to the neural reconstruction model.

\subsubsection{The inverse problem}

We now consider the inverse problem of reconstructing the spatially varying complex-valued shear modulus \(A(x,y)\), or equivalently the perturbation \(B(x,y)\), from the corresponding internal displacement measurements \(u(x,y)\). This constitutes an ill-posed inverse problem, for which small perturbations in the measured displacement data may lead to significant errors in the reconstructed coefficient. Consequently, obtaining an accurate and stable reconstruction requires an appropriate reconstruction strategy and regularization. Both convolutional neural networks and optimization-based methods provide useful approaches for solving this inverse problem, but each has its own computational challenges. A CNN can learn a direct mapping from displacement measurements to the unknown coefficient and, once trained, can generate predictions very efficiently. However, obtaining highly accurate reconstructions using a CNN alone may require a sufficiently large and representative training dataset together with extensive network training, both of which can be computationally expensive. On the other hand, PDE-constrained optimization incorporates the governing physical model directly into the reconstruction and allows regularization to be introduced explicitly. Its performance, however, can depend strongly on the choice of the initial guess. Constructing an initial coefficient sufficiently close to the desired solution is particularly difficult when the unknown is a spatially varying complex-valued function over the entire computational domain. Motivated by these complementary advantages and limitations, we use a hybrid reconstruction framework that combines a two-dimensional CNN with PDE-constrained optimization. Rather than training the CNN with the objective of obtaining the final high-accuracy reconstruction, we use it to generate an approximate spatial representation of the unknown coefficient. This allows the network to serve primarily as a data-driven initialization mechanism. The CNN prediction is subsequently supplied to the optimization procedure, where the coefficient is further refined using the governing PDE, the measured displacement data, and appropriate regularization. More precisely, given displacement data \(u_d\), the trained CNN produces \[ A_{\mathrm{CNN}} = \mathcal{N}_{\theta}(u_d), \] where \(\mathcal{N}_{\theta}\) denotes the trained neural network. Since \[ A=(1+i)+B, \] the corresponding perturbation is obtained as \[ B_{\mathrm{CNN}} = A_{\mathrm{CNN}}-(1+i). \] The optimization is then initialized with \[ B^{(0)}=B_{\mathrm{CNN}}, \] and reconstructs the perturbation \(B\) over the full computational domain. The final shear modulus is subsequently recovered from \[ A_{\mathrm{opt}}=(1+i)+B_{\mathrm{opt}}. \] The hybrid approach therefore assigns different roles to the two components of the reconstruction. The CNN provides a computationally efficient initial estimate that captures the approximate locations, geometries, and amplitudes of the inclusions, without requiring the network itself to achieve the desired final accuracy. The PDE-constrained optimization then fine-tunes this prediction by enforcing consistency with the forward model and incorporating regularization. In this way, the proposed framework exploits the rapid data-driven prediction capability of the CNN while reducing its training burden, and simultaneously alleviates the initialization difficulty associated with optimization-based reconstruction.

\subsection{Hybrid reconstruction framework based on convolutional neural network and optimization}

The proposed hybrid reconstruction procedure consists of two successive stages. In the first stage, a two-dimensional convolutional neural network is trained to learn an approximate inverse mapping from the complex-valued displacement field to the corresponding complex-valued coefficient field. In the second stage, the CNN prediction is used as the initial guess for a PDE-constrained optimization algorithm.
The CNN therefore provides a computationally inexpensive approximation of the unknown coefficient, while the optimization stage refines this approximation using the governing physical model.
A principal advantage of this formulation is that the CNN is not required to reproduce the unknown coefficient with the accuracy expected from the final reconstruction. Instead, it only needs to provide an initial coefficient field sufficiently close to the desired solution for the optimization procedure to perform an effective refinement. This reduces the dependence of the optimization algorithm on arbitrary initial guesses while retaining the PDE model in the final reconstruction stage.

\subsubsection{CNN-based initialization}

Unlike the parameter-based CNN reconstruction considered previously in the one-dimensional numerical experiments, the two-dimensional CNN used here predicts the complete coefficient field directly. It does not estimate a prescribed set of geometric parameters such as the center, amplitude, spread, or orientation of an inclusion. Instead, the network provides a spatially resolved approximation of the complex-valued shear modulus over the entire computational domain.

Since both the displacement \(u\) and the coefficient \(A\) are complex-valued, their real and imaginary parts are supplied to the network as separate channels. The CNN input therefore has the form
\[
\left(
\operatorname{Re}(u),
\operatorname{Im}(u)
\right)
\in
\mathbb{R}^{48\times48\times2},
\]
while the corresponding training target is
\[
\left(
\operatorname{Re}(A),
\operatorname{Im}(A)
\right)
\in
\mathbb{R}^{48\times48\times2}.
\]

After training, a measured displacement field \(u_d\) is passed through the CNN to obtain
\[
A_{\mathrm{CNN}}
=
\operatorname{Re}(A_{\mathrm{CNN}})
+
i\,\operatorname{Im}(A_{\mathrm{CNN}}).
\]
This field is not regarded as the final solution of the inverse problem. Rather, it serves as the initialization for the optimization-based reconstruction described later.

\subsubsection{Training dataset}

To train the CNN on a broader class of spatial coefficient distributions, we generated a mixed dataset consisting of complex-valued Gaussian and  mollified elliptical disc-shaped perturbations. These two families were chosen to expose the CNN to coefficient distributions with qualitatively different spatial features. In particular, the Gaussian perturbations provide smoothly varying profiles with gradual spatial transitions, whereas the  mollified elliptical discs contain a relatively flat interior region together with localized transitions near the boundary of the inclusion. Thus, the mixed training set allows the network to learn from both continuously varying profiles and plateau-like structures.

The training dataset contains \(4000\) coefficient--displacement pairs. Half of the samples contain Gaussian perturbations and the remaining half contain  mollified elliptical disc-shaped perturbations, with the two sample families randomly shuffled before training.

For a Gaussian sample, the real and imaginary components are generated independently according to

$$
B_j(x,y)
=
a_j
\exp\left(
-\frac{(x-c_{x,j})^2+(y-c_{y,j})^2}
{2\sigma_j^2}
\right),
\qquad j=1,2,
$$

where the amplitudes are sampled from

$$
a_j\in(0.1,1.0),
$$

the standard deviations from

$$
\sigma_j\in(0.03,0.5),
$$

and the centers are randomly distributed over the computational domain, \([0,2]\times[0,2]\).

For the  mollified elliptical disc-shaped samples, the amplitudes are again sampled from \((0.1,1.0)\). The semi-axis lengths are randomly sampled from

$$
r_x\in(0.12,0.40),
\qquad
r_y\in(0.08,0.35),
$$

the rotation angle from

$$
\theta\in(0,\pi),
$$

and the mollification parameter from

$$
\varepsilon\in(0.03,0.08).
$$

The centers of the elliptical discs are sampled with an appropriate margin from the boundary so that most of each inclusion remains inside the computational domain, \([0,2]\times[0,2]\).

For every realization of \(A\), the corresponding complex-valued displacement \(u\) is obtained by solving the forward PDE. The resulting pair

$$
\left(u,A\right)
$$

is then used as one training sample for the CNN.

\subsubsection{CNN architecture and training}

We employ an encoder--decoder convolutional neural network to map the
complex-valued displacement field to an initial reconstruction of the
complex-valued coefficient. The real and imaginary components of the
displacement are supplied as two input channels,
\[
\bigl(\operatorname{Re}(u),\operatorname{Im}(u)\bigr)
\in \mathbb{R}^{48\times48\times2},
\]
and the network produces two spatial output fields corresponding to
$\operatorname{Re}(A_{\mathrm{CNN}})$ and
$\operatorname{Im}(A_{\mathrm{CNN}})$.

The encoder progressively reduces the spatial resolution from
$48\times48$ to $12\times12$ while increasing the number of feature
channels from $32$ to $128$. The decoder subsequently restores the
original spatial resolution through two upsampling stages. The resulting
shared representation is passed through separate output branches for the
real and imaginary components of the coefficient. All convolutional
layers use $3\times3$ kernels with same padding and ReLU activations,
except for the final output layers, which use linear activations. The
network contains $444226$ trainable parameters. The complete architecture
is summarized in Table~\ref{tab:cnn_architecture}.

\begin{table}[H]
\centering
\caption{Architecture of the 2D convolutional neural network used to
generate the initial coefficient reconstruction.}
\label{tab:cnn_architecture}

\begin{tabular}{lll}
\hline
\textbf{Stage} & \textbf{Operation} & \textbf{Output size} \\
\hline

Input
& --
& $48\times48\times2$ \\

Encoder 1
& Conv2D $(3\times3)$, 32 filters, ReLU
& $48\times48\times32$ \\

Downsampling 1
& Conv2D $(3\times3)$, 32 filters, stride 2, ReLU
& $24\times24\times32$ \\

Encoder 2
& Conv2D $(3\times3)$, 64 filters, ReLU
& $24\times24\times64$ \\

Downsampling 2
& Conv2D $(3\times3)$, 64 filters, stride 2, ReLU
& $12\times12\times64$ \\

Latent representation
& 2 Conv2D $(3\times3)$, 128 filters, ReLU
& $12\times12\times128$ \\

Decoder 1
& Upsampling $(2\times2)$ + 2 Conv2D $(3\times3)$, 64 filters
& $24\times24\times64$ \\

Decoder 2
& Upsampling $(2\times2)$ + 2 Conv2D $(3\times3)$, 32 filters
& $48\times48\times32$ \\

Real branch
& Conv2D, 32 filters + Conv2D, 1 filter, linear
& $48\times48\times1$ \\

Imaginary branch
& Conv2D, 32 filters + Conv2D, 1 filter, linear
& $48\times48\times1$ \\

\hline
\end{tabular}
\end{table}

The network is trained using the Adam optimizer with a mean-squared-error
loss. The $4000$ coefficient--displacement pairs are divided using an
$80/20$ training--validation split, giving $3200$ training and $800$
validation samples. Training is performed for $300$ epochs with a batch
size of $32$, with the training samples shuffled at each epoch.

A typical training epoch required approximately $175$--$190$ seconds on
the computational platform used in this study. Preliminary experiments
indicated that further improvement of the CNN reconstruction required
approximately $500$--$600$ epochs. Since the CNN serves only to provide
an informative initialization for the subsequent PDE-constrained
optimization, we restrict training to $300$ epochs and use the
optimization stage to refine the resulting coefficient estimate.
\subsection{Hybrid CNN--optimization reconstruction}

The CNN provides a computationally inexpensive reconstruction of the
complete complex-valued shear-modulus field. However, since the CNN
prediction is obtained directly from the measured displacement data,
it is not explicitly constrained to reproduce the measurements through
the governing PDE. Moreover, errors may remain in the reconstructed
amplitudes, inclusion boundaries, and local spatial structure. We
therefore use the CNN reconstruction as an initial guess for a
PDE-constrained optimization method.

Let
\[
A=(1+i)+B,
\]
where $B$ represents the spatially varying perturbation of the
background coefficient. Given a coefficient $B$, let $u(B)$ denote
the solution of the forward problem
\[
\nabla\cdot\left(A\nabla u\right)+\rho\omega^2u=0
\qquad\text{in }\Omega,
\]
subject to the prescribed Dirichlet boundary condition. The measured
internal displacement is denoted by $u_d$.

Starting from the CNN prediction, we minimize the regularized
data-misfit functional
\begin{equation}
J(B)
=
\frac{\alpha}{2}
\int_\Omega
|u(B)-u_d|^2\,dx
+
\frac{\beta}{2}
\int_\Omega
|B|^2\,dx,
\label{eq:hybrid_objective}
\end{equation}
where
\[
B^{(0)}=B_{\mathrm{CNN}},
\]
Thus, the CNN reconstruction serves two purposes: it provides the
initial iterate for the optimization and acts as a reference in the
regularization term.

The first term in~\eqref{eq:hybrid_objective} measures the discrepancy
between the displacement generated by the reconstructed coefficient
and the measured internal displacement. The second term represents
a zero-order Tikhonov regularization term. The regularization term
\[
\frac{\beta}{2}\int_{\Omega}|B|^2\,dx.
\]
penalizes large values of the coefficient perturbation and
provides regularization for the ill-posed inverse problem. The
regularization parameter $\beta>0$ controls the balance between fitting
the measured displacement data and controlling the magnitude of the
reconstructed perturbation.

The CNN prediction is used only to initialize the optimization,
\[
B^{(0)}=B_{\mathrm{CNN}},
\]
and does not enter the regularization term. Thus, the CNN provides an
informed starting point for the iterative reconstruction, while the
subsequent PDE-constrained optimization, together with Tikhonov
regularization, refines the coefficient using the measured displacement
data and the governing forward model.

\subsubsection{Adjoint-based gradient computation}

Since the displacement $u(B)$ depends implicitly on the coefficient
through the forward PDE, direct differentiation of the objective with
respect to every coefficient value would be computationally expensive.
We therefore compute the gradient using a discrete adjoint formulation.
At each optimization iteration, the forward problem is first solved for
the current coefficient $B^{(k)}$. An associated adjoint problem is
then solved using the displacement residual $u(B^{(k)})-u_d$.

The resulting adjoint variables are used to evaluate the derivatives
of~\eqref{eq:hybrid_objective} with respect to the real and imaginary
components of $B$. Both components are optimized simultaneously, so
that the optimization updates the complete complex-valued coefficient.

\subsubsection{Nonlinear conjugate-gradient iteration}

The minimization of~\eqref{eq:hybrid_objective} is performed using a
nonlinear conjugate-gradient (NCG) method. Starting with
\[
B^{(0)}=B_{\mathrm{CNN}},
\]
the initial search direction is chosen as the negative gradient,
\[
d^{(0)}=-\nabla J(B^{(0)}).
\]
For subsequent iterations, the search direction combines the current
negative gradient with the previous search direction,
\[
d^{(k)}
=
-\nabla J(B^{(k)})
+
\gamma_k d^{(k-1)},
\]
where $\gamma_k$ is the nonlinear conjugate-gradient parameter.

The coefficient is updated according to
\[
B^{(k+1)}
=
B^{(k)}+s_k d^{(k)},
\]
where the step length $s_k$ is determined using an Armijo
backtracking line search.

\subsubsection{Armijo line search}

For a given search direction $d^{(k)}$, the line search begins with a
trial step length and successively reduces it until the sufficient
decrease condition
\[
J\!\left(B^{(k)}+s_kd^{(k)}\right)
\leq
J(B^{(k)})
+
c\,s_k
\operatorname{Re}
\left\langle
\nabla J(B^{(k)}),d^{(k)}
\right\rangle
\]
is satisfied, where $c>0$ is the Armijo parameter. This adaptive
choice of the step length prevents excessively large coefficient
updates while allowing the optimization to make larger corrections
when they produce sufficient reduction of the objective.

The optimization is terminated when the prescribed stopping criterion
is satisfied or when the maximum number of iterations is reached.
The resulting hybrid reconstruction therefore combines the learned
spatial information contained in the CNN prediction with refinement
based explicitly on the governing PDE and the measured displacement.

\subsection{Numerical experiments}
We investigate the generalization capability of the proposed framework through two sets of numerical experiments. First, we consider Gaussian
and  mollified elliptical disc-shaped inclusions separately and evaluate the CNN and hybrid reconstructions on test samples of the same structural classes
used during training. Second, we consider a more challenging out-of-distribution configuration in which Gaussian and  mollified elliptical disc structures occur simultaneously in the same coefficient
field, although such combined configurations are not included in the training data.

\subsubsection{Experiment I: reconstruction of individual shape families}

The first test dataset contains $500$ samples, consisting of
\[
250 \text{ Gaussian samples}
\qquad\text{and}\qquad
250 \text{ mollified-ellipse samples}.
\]
As in the training dataset, each sample belongs to only one of the two shape families.

For the Gaussian test samples, the amplitudes are sampled from
\[
a_r,a_i\sim\mathcal{U}(0.2,0.8),
\]
with
\[
\sigma_r,\sigma_i\sim\mathcal{U}(0.1,0.3),
\]
and center coordinates sampled from
\[
c_x,c_y\sim\mathcal{U}(0.3,1.7).
\]

For the  mollified elliptical disc-shaped test samples, the amplitudes satisfy
\[
a_r,a_i\sim\mathcal{U}(0.2,0.8),
\]
the semi-axis parameters are sampled from
\[
r_x\sim\mathcal{U}(0.2,0.3),
\qquad
r_y\sim\mathcal{U}(0.1,0.3),
\]
with
\[
\theta\sim\mathcal{U}(0,\pi),
\qquad
\varepsilon\sim\mathcal{U}(0.03,0.08).
\]

This experiment evaluates whether a single CNN trained jointly on two coefficient families can identify and reconstruct either family directly from the corresponding internal displacement data. The test data considered in this experiment does not have noise.

To assess the performance of the CNN over the complete test dataset, we
compute the $L^2(\Omega)$ reconstruction errors for the real and imaginary
components of the coefficient, as well as for the full complex-valued
coefficient. For each test sample, these errors are defined by
\[
E_{\mathrm{Re}}
=
\left\|
\operatorname{Re}(A_{\mathrm{CNN}})
-
\operatorname{Re}(A_d)
\right\|_{L^2(\Omega)},
\]
\[
E_{\mathrm{Im}}
=
\left\|
\operatorname{Im}(A_{\mathrm{CNN}})
-
\operatorname{Im}(A_d)
\right\|_{L^2(\Omega)},
\]
and
\[
E_A
=
\left\|
A_{\mathrm{CNN}}-A_d
\right\|_{L^2(\Omega)}.
\]

Table~\ref{tab:cnn_test_errors} summarizes the reconstruction errors over
all $500$ test samples. The CNN achieves mean errors of
$6.4693\times10^{-2}$ and $6.6093\times10^{-2}$ for the real and imaginary
components, respectively, while the mean error for the complete
complex-valued coefficient is $9.4681\times10^{-2}$. The corresponding
median errors are slightly smaller than the mean errors, indicating that
the reconstruction error for the majority of the test samples is below
the overall average.

\begin{table}[H]
\centering
\caption{CNN reconstruction errors over the $500$ test samples in
Experiment I.}
\label{tab:cnn_test_errors}

\begin{tabular}{lccccc}
\hline
 & Mean & Std. & Median & Min. & Max. \\
\hline

$\operatorname{Re}(A)$
& $6.4693\times10^{-2}$
& $3.0304\times10^{-2}$
& $5.8194\times10^{-2}$
& $1.4715\times10^{-2}$
& $2.1719\times10^{-1}$ \\

$\operatorname{Im}(A)$
& $6.6093\times10^{-2}$
& $2.7720\times10^{-2}$
& $6.0846\times10^{-2}$
& $1.8937\times10^{-2}$
& $1.9950\times10^{-1}$ \\

$A$
& $9.4681\times10^{-2}$
& $3.5716\times10^{-2}$
& $8.8932\times10^{-2}$
& $3.1948\times10^{-2}$
& $2.6949\times10^{-1}$ \\

\hline
\end{tabular}
\end{table}

The distributions of these errors over the test dataset are shown in
Figure~\ref{fig:cnn_test_error_histograms}. The histograms provide a more
complete characterization of the CNN performance than a single
representative reconstruction and illustrate the variability of the
prediction accuracy across the randomly generated test coefficients.
\begin{figure}[H]
\centering

\begin{subfigure}{0.32\textwidth}
    \centering
    \includegraphics[width=\linewidth]
    {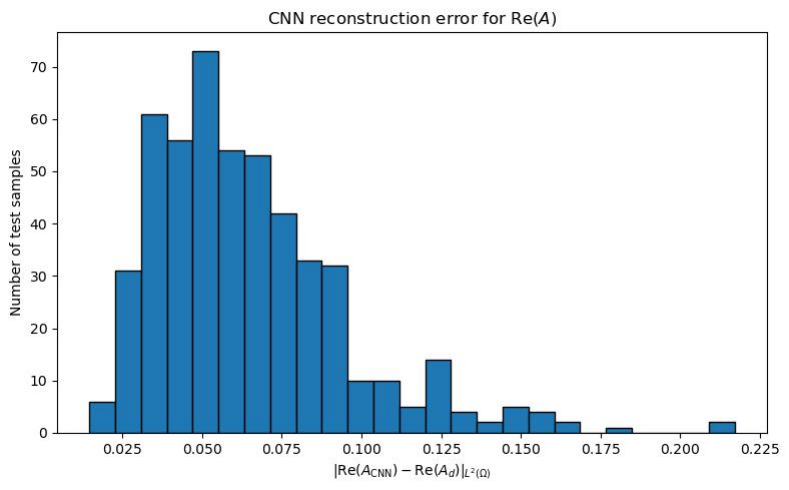}
    \caption{$L^2(\Omega)$ reconstruction errors for
    $\operatorname{Re}(A)$.}
\end{subfigure}
\hfill
\begin{subfigure}{0.32\textwidth}
    \centering
    \includegraphics[width=\linewidth]
    {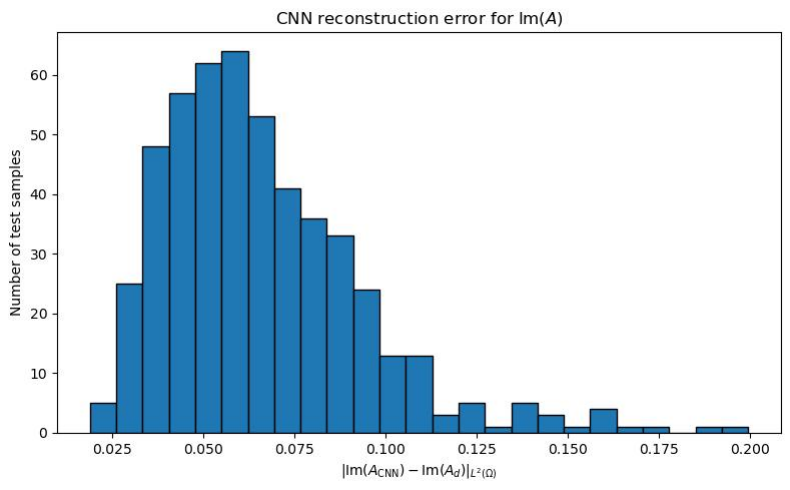}
    \caption{$L^2(\Omega)$ reconstruction errors for
    $\operatorname{Im}(A)$.}
\end{subfigure}
\hfill
\begin{subfigure}{0.32\textwidth}
    \centering
    \includegraphics[width=\linewidth]
    {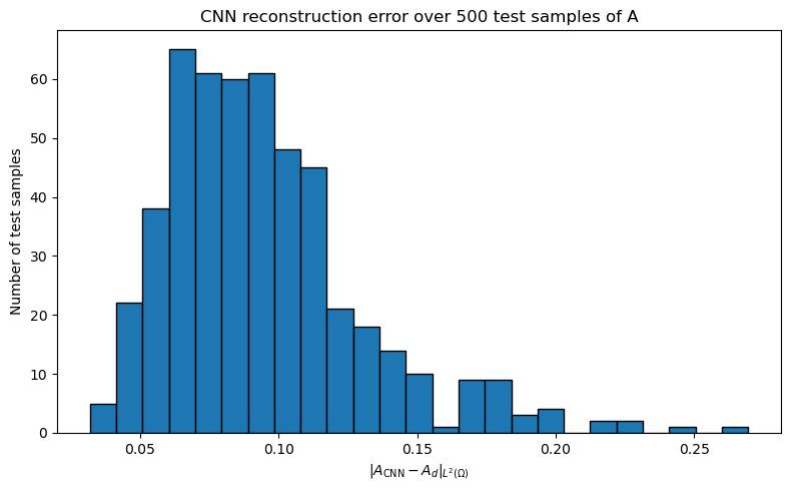}
    \caption{$L^2(\Omega)$ reconstruction errors for the
    complex-valued coefficient $A$.}
\end{subfigure}

\caption{Distribution of the CNN reconstruction errors over all $500$
test samples in Experiment I. The test dataset contains both Gaussian
and mollified-ellipse coefficient fields.}
\label{fig:cnn_test_error_histograms}

\end{figure}
Having evaluated the CNN over the complete test dataset, we next examine
representative reconstructions from the two coefficient families in more
detail. These examples illustrate both the spatial structure recovered by
the CNN and the quality of the initial coefficient fields supplied to the
subsequent PDE-constrained optimization stage.

Representative CNN predictions for the two individual shape families are shown in ~\ref{fig:CNN_ellipse_sample} and
Figures~\ref{fig:CNN_gaussian_sample}.
For each test sample, the input to the network consists of the real and
imaginary components of the corresponding complex-valued internal displacement
field, while the output is an approximation of the spatially varying
complex-valued coefficient field.

For a representative test coefficient containing a complex-valued
mollified-ellipse perturbation, the corresponding internal displacement
field is shown in Figure~\ref{fig:ellipse_displacement_input}. This
displacement field constitutes the measured data for the inverse problem, with its real and imaginary components.

\begin{figure}[H]
\centering
\includegraphics[width=0.5\linewidth]
{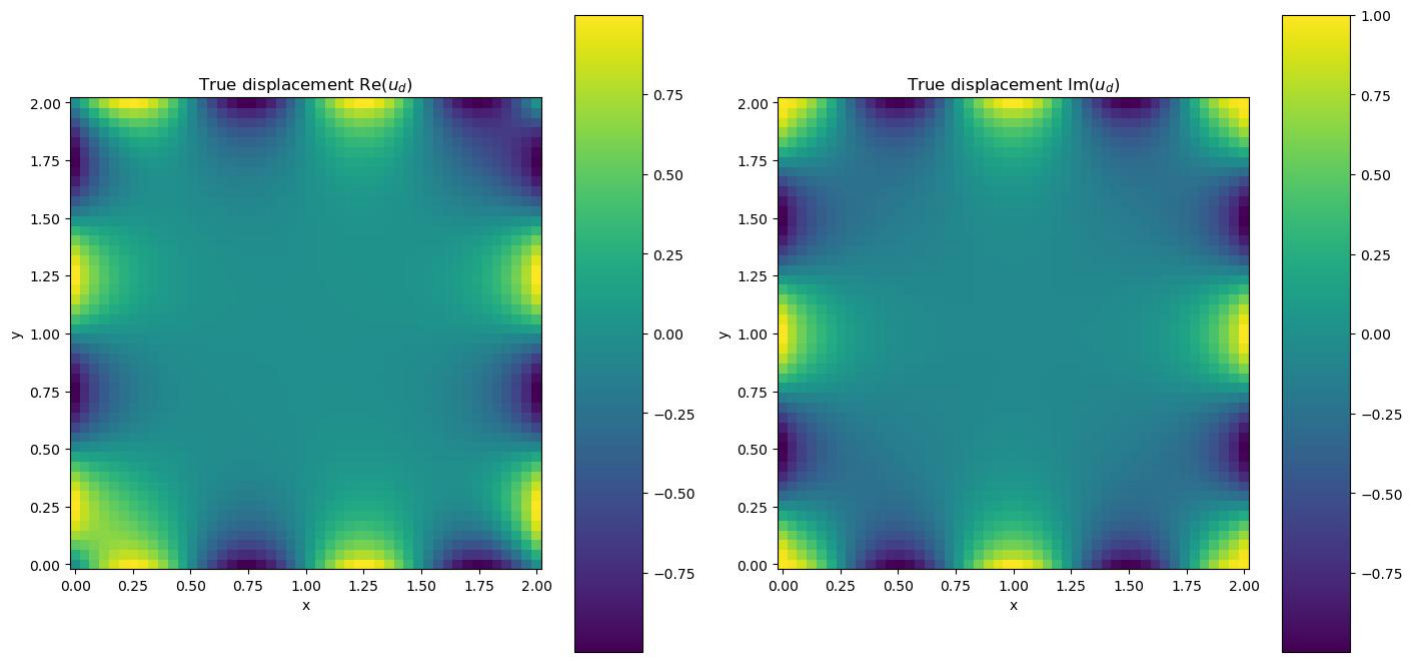}
\caption{Real and imaginary components of the internal displacement field
\(u_d\) corresponding to a representative complex-valued mollified-ellipse
perturbation. These displacement data form the two input channels to the CNN.}
\label{fig:ellipse_displacement_input}
\end{figure}

Using the displacement field in
Figure~\ref{fig:ellipse_displacement_input}, the CNN produces the coefficient
estimate shown in Figure~\ref{fig:CNN_ellipse_sample}. The prediction captures
the approximate locations and geometries of the real and imaginary
perturbations, while some discrepancies in shape and amplitude remain.

\begin{figure}[H]
\centering

\begin{subfigure}{0.5\textwidth}
    \centering
    \includegraphics[width=\linewidth]
    {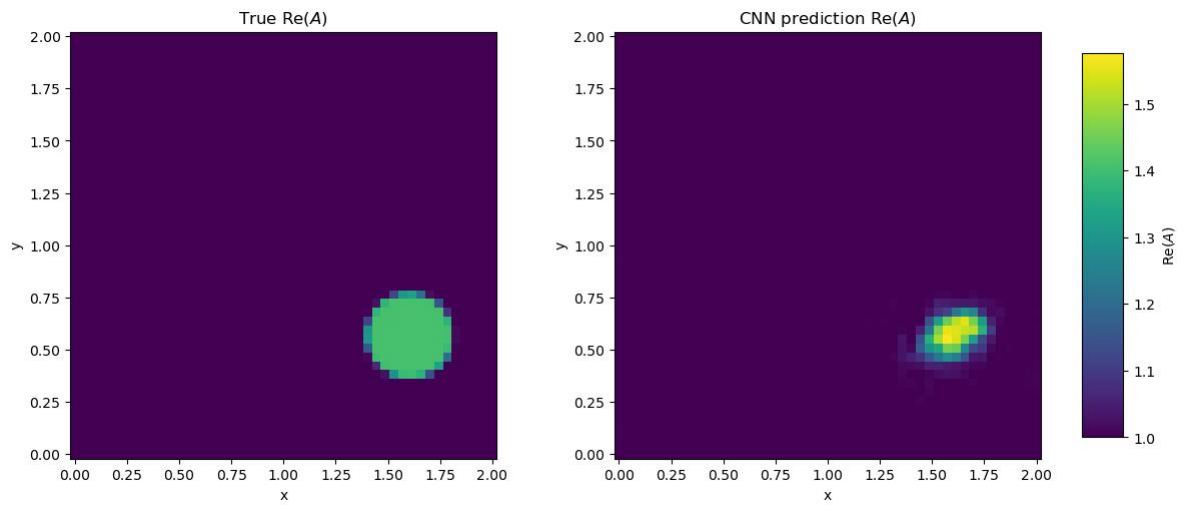}
    \caption{Real component of the coefficient.}
    \label{fig:CNN_ellipse_real}
\end{subfigure}

\begin{subfigure}{0.5\textwidth}
    \centering
    \includegraphics[width=\linewidth]
    {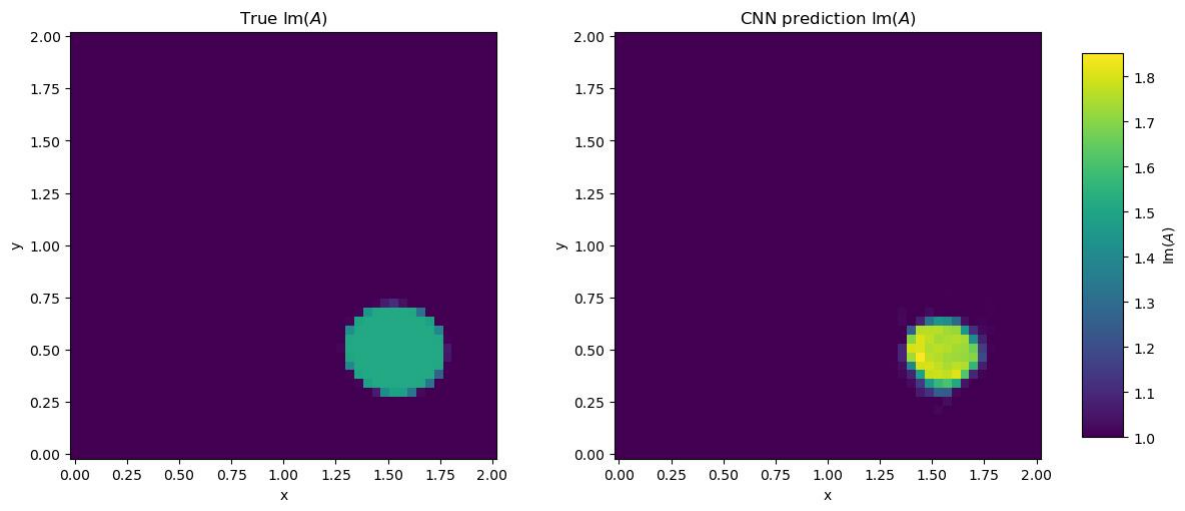}
    \caption{Imaginary component of the coefficient.}
    \label{fig:CNN_ellipse_imag}
\end{subfigure}

\caption{Representative CNN prediction for a complex-valued
mollified-ellipse test sample from Experiment I. The top row compares
the true and CNN-predicted real components of \(A(x,y)\), while the
bottom row compares the corresponding imaginary components.}

\label{fig:CNN_ellipse_sample}
\end{figure}

\begin{figure}[H]
\centering
\includegraphics[width=0.5\linewidth]
{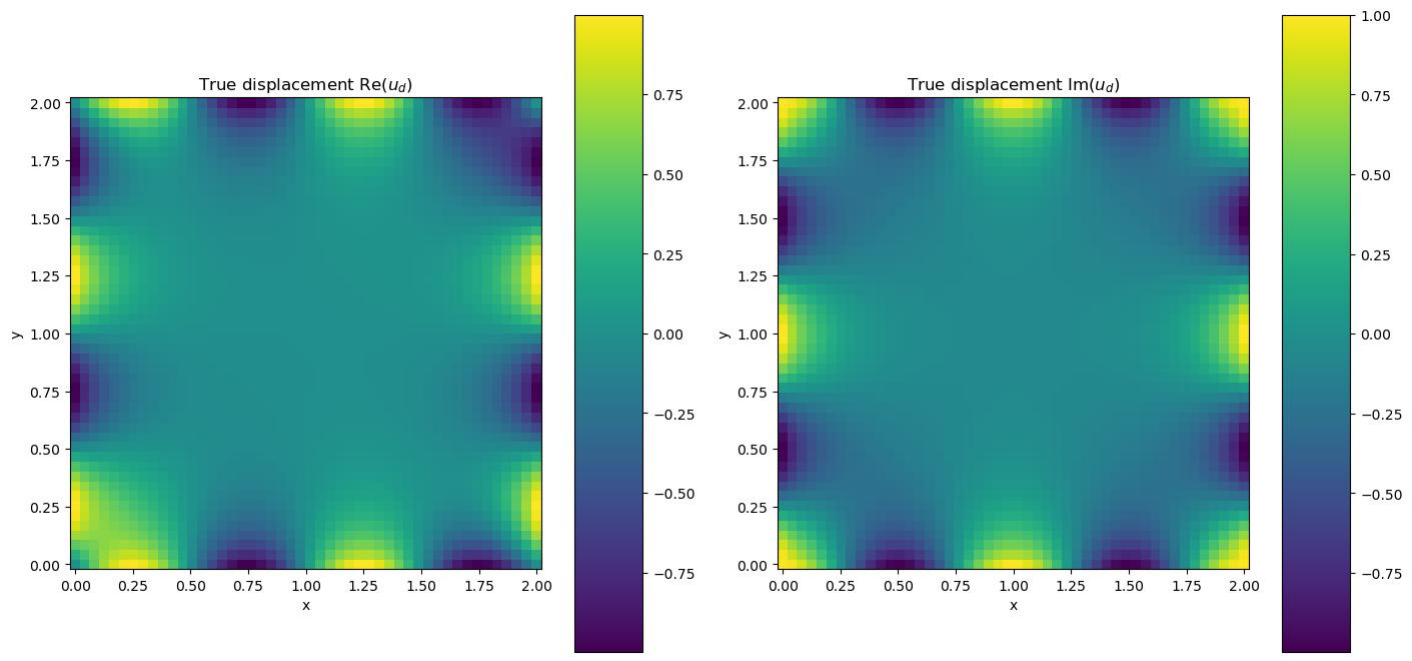}

\caption{Real and imaginary components of the internal displacement field
\(u_d\) corresponding to a representative complex-valued Gaussian
perturbation. These displacement data are supplied to the CNN as the two
input channels for predicting the corresponding complex-valued coefficient
field.}

\label{fig:gaussian_displacement_input}
\end{figure}

A representative result for the Gaussian test family is shown in
Figure~\ref{fig:CNN_gaussian_sample}, using the displacement field in
Figure~\ref{fig:gaussian_displacement_input} as input to the CNN.
\begin{figure}[H]
\centering

\begin{subfigure}{0.5\textwidth}
    \centering
    \includegraphics[width=\linewidth]
    {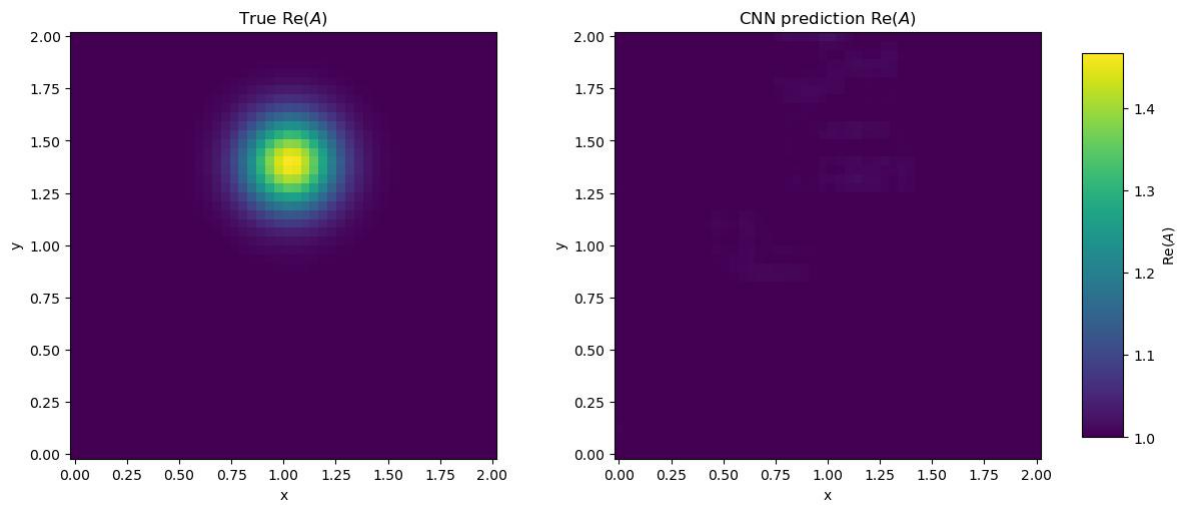}
    \caption{Real component of the coefficient.}
    \label{fig:CNN_gaussian_real}
\end{subfigure}

\begin{subfigure}{0.5\textwidth}
    \centering
    \includegraphics[width=\linewidth]
    {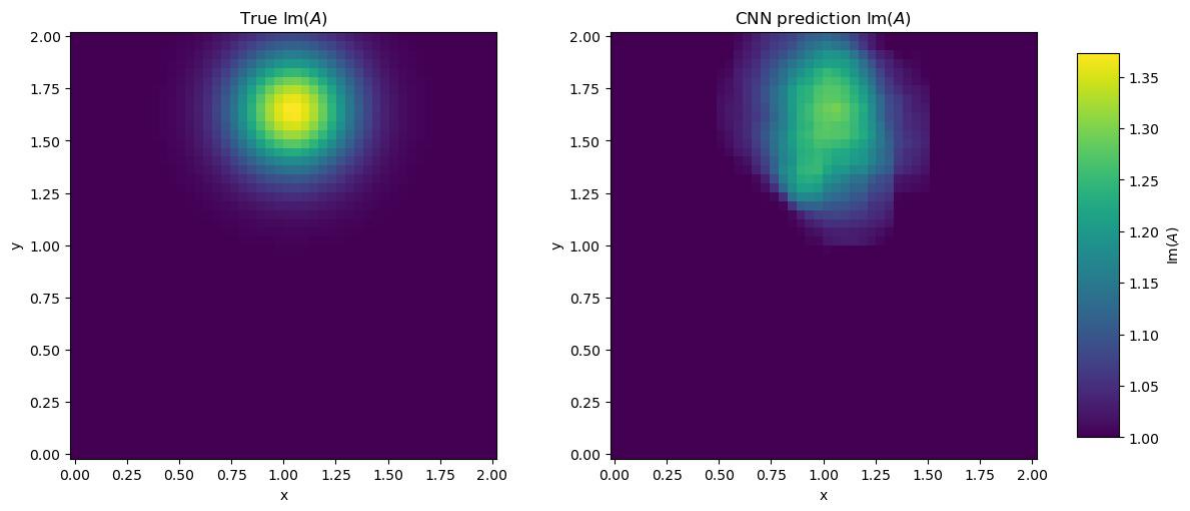}
    \caption{Imaginary component of the coefficient.}
    \label{fig:CNN_gaussian_imag}
\end{subfigure}

\caption{Representative CNN prediction for a complex-valued
Gaussian test sample from Experiment I. The top row compares
the true and CNN-predicted real components of \(A(x,y)\), while the
bottom row compares the corresponding imaginary components. The
CNN prediction is subsequently used as the initial guess for the
PDE-constrained optimization stage of the hybrid reconstruction
framework.}

\label{fig:CNN_gaussian_sample}
\end{figure}

To further illustrate the CNN prediction for the real component, we
additionally display the same prediction using its own color scale in
Figure~\ref{fig:CNN_gaussian_real_individual_scale}. Although the variation
in the predicted real component is small relative to the amplitude of the
true coefficient and is therefore barely visible on the common color scale
in Figure~\ref{fig:CNN_gaussian_sample}, the individually scaled
visualization reveals the spatial structure present in the CNN output.

\begin{figure}[H]
\centering
\includegraphics[width=0.5\textwidth]
{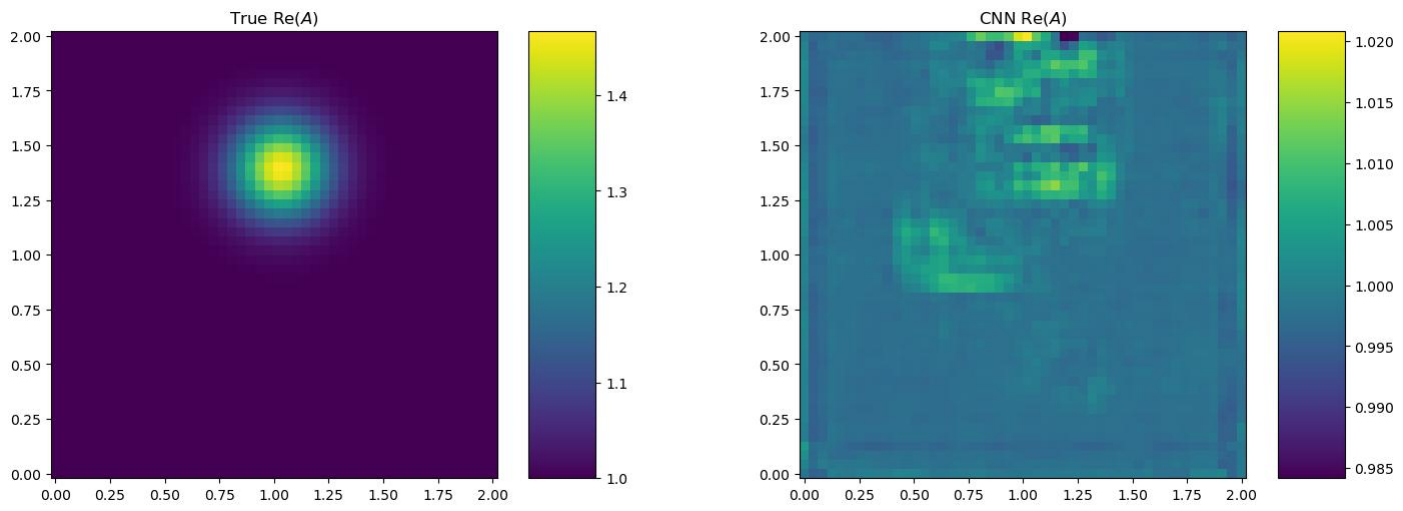}

\caption{Real component of the representative Gaussian test sample shown
using separate color scales for the true coefficient and the CNN prediction.
The individually scaled CNN prediction reveals the weak spatial variations
that are not readily visible when the common color scale in
Figure~\ref{fig:CNN_gaussian_sample} is used.}

\label{fig:CNN_gaussian_real_individual_scale}
\end{figure}

For both shape families, the CNN recovers the principal spatial features of
the unknown perturbations, including their approximate locations, amplitudes,
and geometries, although smoothing and small spatial artifacts may remain.

The CNN-predicted coefficient shown in
Figure~\ref{fig:CNN_ellipse_sample} and Figure~\ref{fig:CNN_gaussian_sample} is subsequently used as informed initial
guesses for the PDE-constrained optimization. The optimization refines the
coefficient by minimizing the discrepancy between the measured internal
displacement and the displacement generated by the forward model, together
with the prescribed regularization. 

\paragraph{Optimization setup.}

For the optimization stage of the hybrid framework, the coefficient field
predicted by the CNN is used as the initial iterate. The reconstruction is
then refined by minimizing the regularized PDE-constrained objective
functional introduced previously. The nonlinear conjugate-gradient (NCG)
method with an Armijo backtracking line search is used for the minimization.

For the representative mollified-ellipse and Gaussian test samples considered
in this experiment, the optimization parameters are summarized in
Table~\ref{tab:experiment1_optimization_parameters}.

\begin{table}[H]
\centering
\caption{Optimization parameters used for the representative
mollified-ellipse and Gaussian test samples in Experiment I.}
\label{tab:experiment1_optimization_parameters}

\begin{tabular}{lcc}
\hline
Parameter
& mollified elliptical disc
& Gaussian \\
\hline
Data-fidelity weight $\alpha$
& 20
& 20 \\

Regularization parameter $\beta$
& $1\times10^{-7}$
& $5\times10^{-6}$ \\

Maximum NCG iterations $k_{\max}$
& 1000
& 2000 \\

Initial line-search step size
& 1
& 1 \\

Armijo parameter
& $1\times10^{-4}$
& $1\times10^{-4}$ \\

Backtracking reduction factor
& 0.8
& 0.8 \\
\hline
\end{tabular}
\end{table}

To directly illustrate the contribution of the optimization stage, we compare
the true coefficient, the CNN-generated initial estimate, and the coefficient
obtained after PDE-constrained optimization for both shape families. The
corresponding comparisons for the mollified-ellipse and Gaussian test samples
are shown in Figures~\ref{fig:ellipse_hybrid_comparison} and
\ref{fig:gaussian_hybrid_comparison}, respectively. 


\begin{figure}[H]
\centering
\includegraphics[width=0.7\linewidth]
{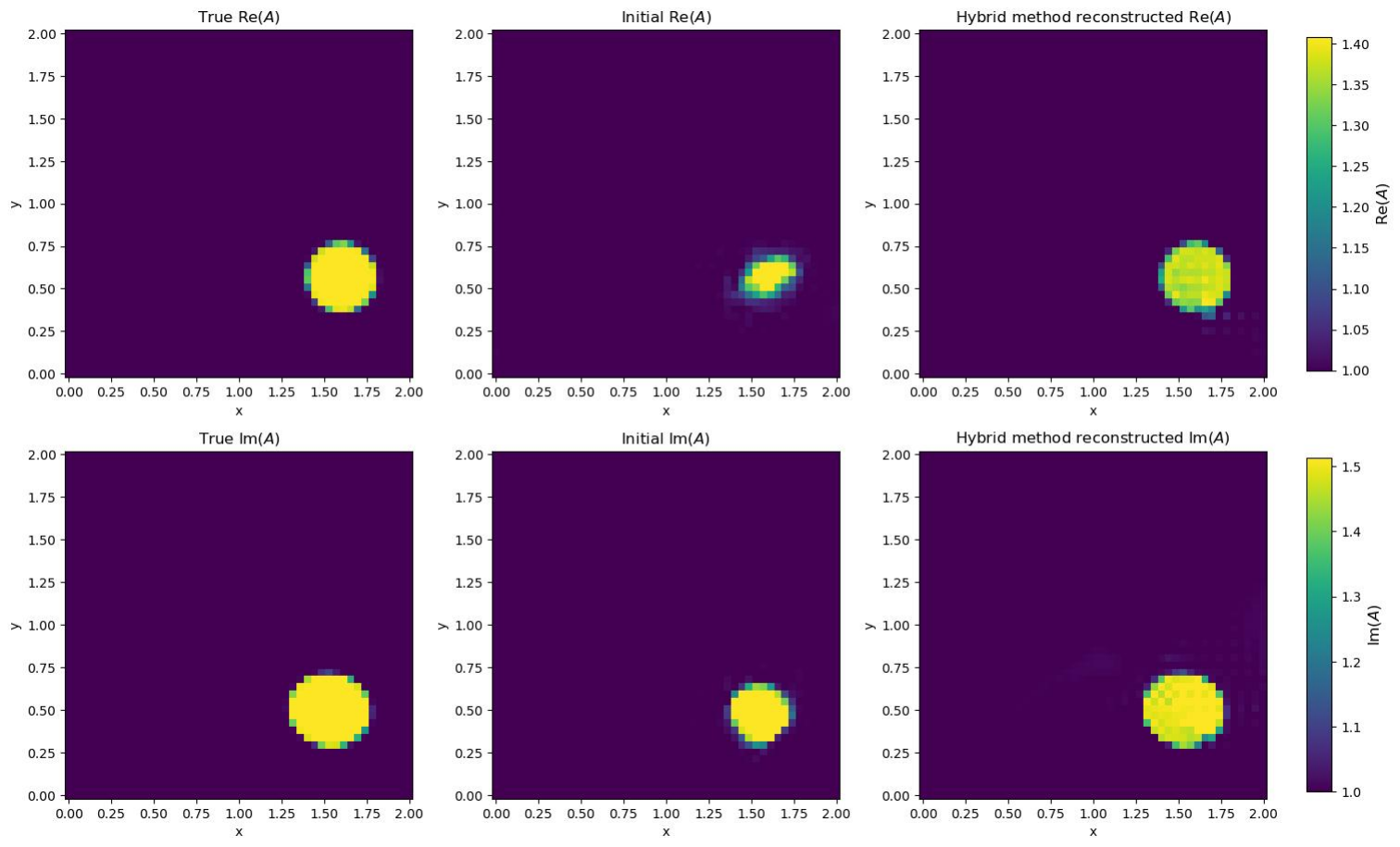}

\caption{End-to-end hybrid reconstruction for a representative
complex-valued mollified-ellipse test sample from Experiment I.
The left column shows the true coefficient, the middle column shows
the CNN-generated initial estimate, and the right column shows the
coefficient reconstructed after PDE-constrained optimization.
The top and bottom rows correspond to the real and imaginary
components of \(A(x,y)\), respectively.}

\label{fig:ellipse_hybrid_comparison}
\end{figure}


\begin{figure}[H]
\centering
\includegraphics[width=0.7\linewidth]
{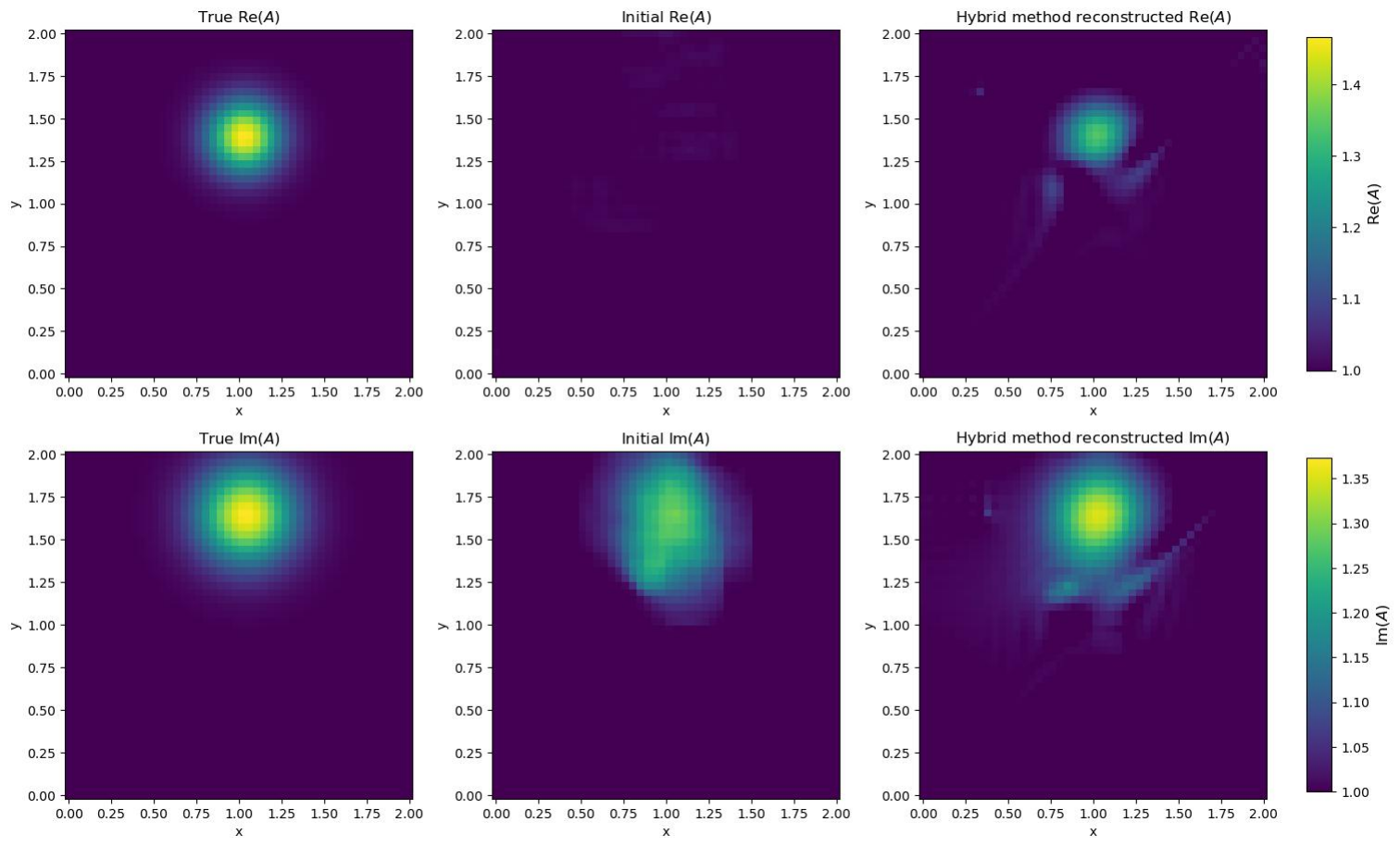}

\caption{End-to-end hybrid reconstruction for a representative
complex-valued Gaussian test sample from Experiment I.
The left column shows the true coefficient, the middle column shows
the CNN-generated initial estimate, and the right column shows the
coefficient reconstructed after PDE-constrained optimization.
The top and bottom rows correspond to the real and imaginary
components of \(A(x,y)\), respectively.}

\label{fig:gaussian_hybrid_comparison}
\end{figure}

To quantify the improvement obtained through the optimization stage, we
compare the reconstruction errors of the CNN-generated initial estimate with
those of the final hybrid reconstruction. The results are summarized in
Table~\ref{tab:ellipse_hybrid_errors} and Table~\ref{tab:gaussian_hybrid_errors}.

\begin{table}[H]
\centering
\caption{Reconstruction errors for the representative mollified-elliptical
disc-shaped test sample before and after the hybrid reconstruction framework.}
\label{tab:ellipse_hybrid_errors}

\begin{tabular}{lccc}
\hline
 & CNN initial estimate & Hybrid reconstruction & Error decrease (\%) \\
\hline

$\|\operatorname{Re}(A)-\operatorname{Re}(A_d)\|_{L^2}$
& $1.005910\times10^{-1}$
& $3.075439\times10^{-2}$
& $69.43$ \\

$\|\operatorname{Im}(A)-\operatorname{Im}(A_d)\|_{L^2}$
& $1.365835\times10^{-1}$
& $2.752220\times10^{-2}$
& $79.85$ \\

$\|A-A_d\|_{L^2}$
& $1.696279\times10^{-1}$
& $4.127111\times10^{-2}$
& $75.67$ \\

\hline
\end{tabular}
\end{table}

For the representative mollified-elliptical disc-shaped test sample, the
PDE-constrained optimization substantially improves the CNN initialization
for both components of the coefficient and for the full complex-valued
coefficient $A$.

\begin{table}[H]
\centering
\caption{Reconstruction errors for the representative Gaussian test sample
before and after the hybrid reconstruction framework.}
\label{tab:gaussian_hybrid_errors}

\begin{tabular}{lccc}
\hline
 & CNN initial estimate & Hybrid reconstruction & Error decrease (\%) \\
\hline

$\|\operatorname{Re}(A)-\operatorname{Re}(A_d)\|_{L^2}$
& $1.2582\times10^{-1}$ 
& $7.3615\times10^{-2}$ 
& $41.49$ \\

$\|\operatorname{Im}(A)-\operatorname{Im}(A_d)\|_{L^2}$
& $4.3595\times10^{-2}$ 
& $4.1262\times10^{-2}$ 
& $5.35$ \\

$\|A-A_d\|_{L^2}$
& $1.3316\times10^{-1}$ 
& $8.4390\times10^{-2}$ 
& $36.62$ \\

\hline
\end{tabular}
\end{table}

For both shape families, the CNN-generated estimate identifies the principal
locations and geometries of the real and imaginary perturbations and therefore
provides an informative initialization for the inverse problem. The subsequent
optimization stage refines this estimate by incorporating the governing PDE
and the internal displacement measurements. The results demonstrate that the
hybrid framework can be applied to both Gaussian and mollified-ellipse
coefficient families using a single CNN trained jointly on the two classes.

\paragraph{Robustness to noisy displacement data.}

We next investigate the robustness of the reconstruction framework when the
internal displacement measurements are contaminated by noise. The same
coefficient samples used in the noiseless experiment are considered, while
the corresponding displacement fields are perturbed before being supplied
to the CNN.

Let $u_d$ denote the noiseless internal displacement and let
$u_d^\delta$ denote the noisy measurement. The perturbation is scaled so
that
\begin{equation}
\frac{
\|u_d^\delta-u_d\|_{L^2(\Omega)}
}{
\|u_d\|_{L^2(\Omega)}
}
=
\delta.
\label{eq:relative_noise_exp1}
\end{equation}
We consider a relative noise level of
\[
\delta=0.01
\]
corresponding to $1\%$ 
noise, respectively.
The CNN used in the noiseless experiments is retained without retraining.
Thus, the noisy displacement fields are supplied directly to the network,
and the resulting coefficient predictions are used as initial guesses for
the subsequent PDE-constrained optimization. This experiment therefore
evaluates the sensitivity of both the CNN initialization and the complete
hybrid reconstruction framework to perturbations in the internal
displacement data.

To first assess the effect of measurement noise on the CNN prediction
independently of the subsequent optimization stage, we evaluate the
pretrained CNN over all $500$ test samples using displacement data
contaminated with $1\%$ relative noise. The $L^2(\Omega)$ reconstruction
errors for the real and imaginary components and for the complete
complex-valued coefficient are shown in
Figure~\ref{fig:cnn_noise1_error_histograms}.

\begin{figure}[H]
\centering

\begin{subfigure}{0.32\textwidth}
    \centering
    \includegraphics[width=\linewidth]
    {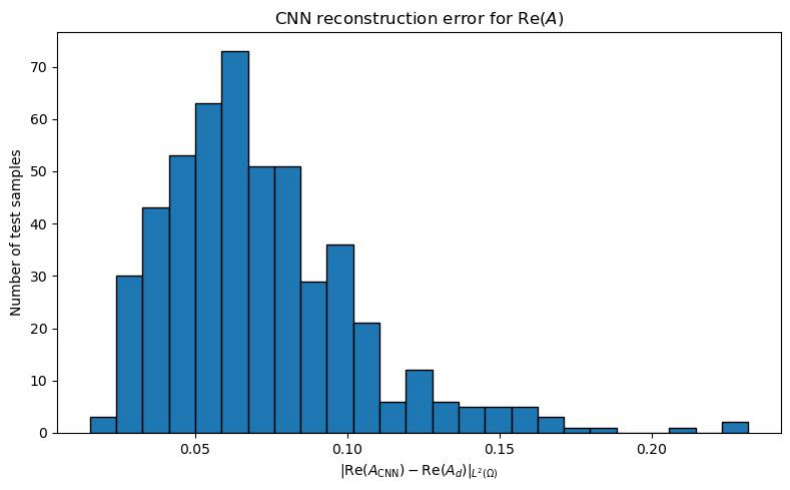}
    \caption{$L^2(\Omega)$ reconstruction errors for
    $\operatorname{Re}(A)$.}
    \label{fig:cnn_noise1_real}
\end{subfigure}
\hfill
\begin{subfigure}{0.32\textwidth}
    \centering
    \includegraphics[width=\linewidth]
    {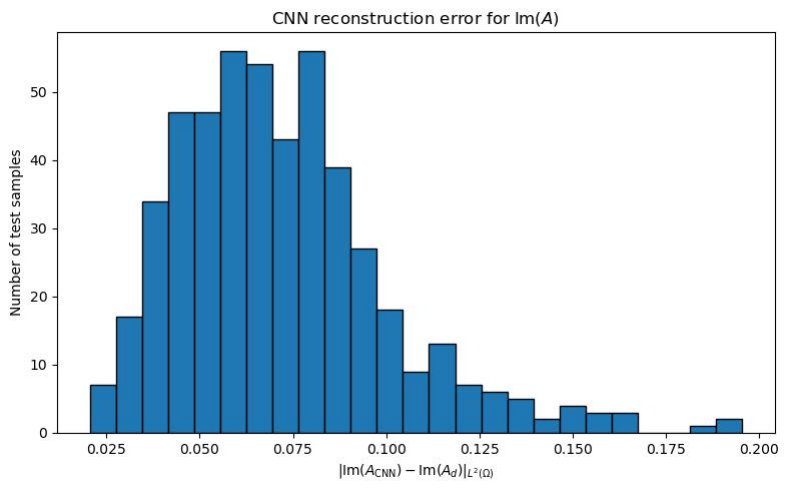}
    \caption{$L^2(\Omega)$ reconstruction errors for
    $\operatorname{Im}(A)$.}
    \label{fig:cnn_noise1_imag}
\end{subfigure}
\hfill
\begin{subfigure}{0.32\textwidth}
    \centering
    \includegraphics[width=\linewidth]
    {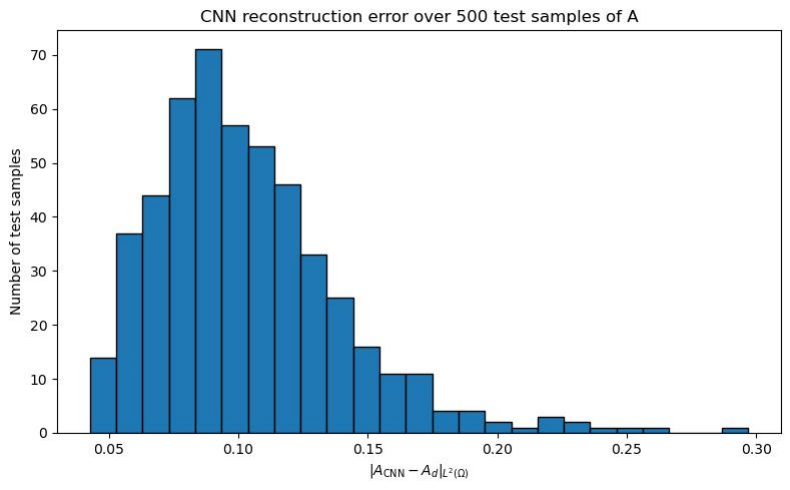}
    \caption{$L^2(\Omega)$ reconstruction errors for the
    complex-valued coefficient $A$.}
    \label{fig:cnn_noise1_complex}
\end{subfigure}

\caption{Distribution of the CNN reconstruction errors over all $500$
test samples using internal displacement data contaminated with $1\%$
relative noise. The CNN is trained using noiseless displacement data
and is applied directly to the noisy measurements without retraining.}
\label{fig:cnn_noise1_error_histograms}

\end{figure}

\begin{table}[H]
\centering
\caption{Summary statistics of the CNN reconstruction errors for the
$1\%$ noisy test data.}
\label{tab:cnn_noise1_test_errors}

\begin{tabular}{lccccc}
\hline
 & Mean & Std. & Median & Min. & Max. \\
\hline

$\operatorname{Re}(A)$
& $7.1454\times10^{-2}$
& $3.2029\times10^{-2}$
& $6.5777\times10^{-2}$
& $1.5458\times10^{-2}$
& $2.3168\times10^{-1}$ \\

$\operatorname{Im}(A)$
& $7.1907\times10^{-2}$
& $2.8521\times10^{-2}$
& $6.7771\times10^{-2}$
& $2.0490\times10^{-2}$
& $1.9553\times10^{-1}$ \\

$A$
& $1.0377\times10^{-1}$
& $3.6707\times10^{-2}$
& $9.7522\times10^{-2}$
& $4.2700\times10^{-2}$
& $2.9684\times10^{-1}$ \\

\hline
\end{tabular}
\end{table}

The error distributions remain concentrated at relatively small values,
with right-skewed tails corresponding to a smaller number of test samples
with larger reconstruction errors. Overall, the results indicate that the
CNN retains reasonable reconstruction accuracy under $1\%$ measurement
noise despite being trained exclusively on noiseless data.

To further examine the performance of the complete hybrid framework in the
presence of measurement noise, we consider a representative Gaussian
coefficient from the test dataset. The displacement measurement is
contaminated with $1\%$ relative noise and supplied to the pretrained CNN.
The resulting CNN prediction is then used as the initial guess for the
PDE-constrained optimization. Figure~\ref{fig:gaussian_noise1_reconstruction}
compares the true coefficient, the CNN initialization, and the final hybrid
reconstruction.
The optimization
and line-search parameters used for the $1\%$ noise experiment are
summarized in Table~\ref{tab:noise1_opt_parameters}.

\begin{table}[H]
\centering
\caption{Optimization parameters used for the reconstruction of the Gaussian
and mollified-ellipse inclusions with $1\%$ relative noise.}
\label{tab:noise1_opt_parameters}

\begin{tabular}{lcc}
\hline
\textbf{Parameter}
& \textbf{Gaussian}
& \textbf{mollified elliptical disc} \\
\hline

Data-fidelity parameter $\alpha$
& $20$
& $20$ \\

Regularization parameter $\beta$
& $1\times10^{-5}$
& $1\times10^{-6}$ \\

Maximum number of iterations $k_{\max}$
& $200$
& $150$ \\

Initial line-search step size
& $1$
& $1$ \\

Armijo parameter
& $10^{-4}$
& $10^{-4}$ \\

Step-size reduction factor
& $0.8$
& $0.8$ \\

\hline
\end{tabular}
\end{table}

\begin{figure}[H]
    \centering
    \includegraphics[width=0.7\textwidth]
    {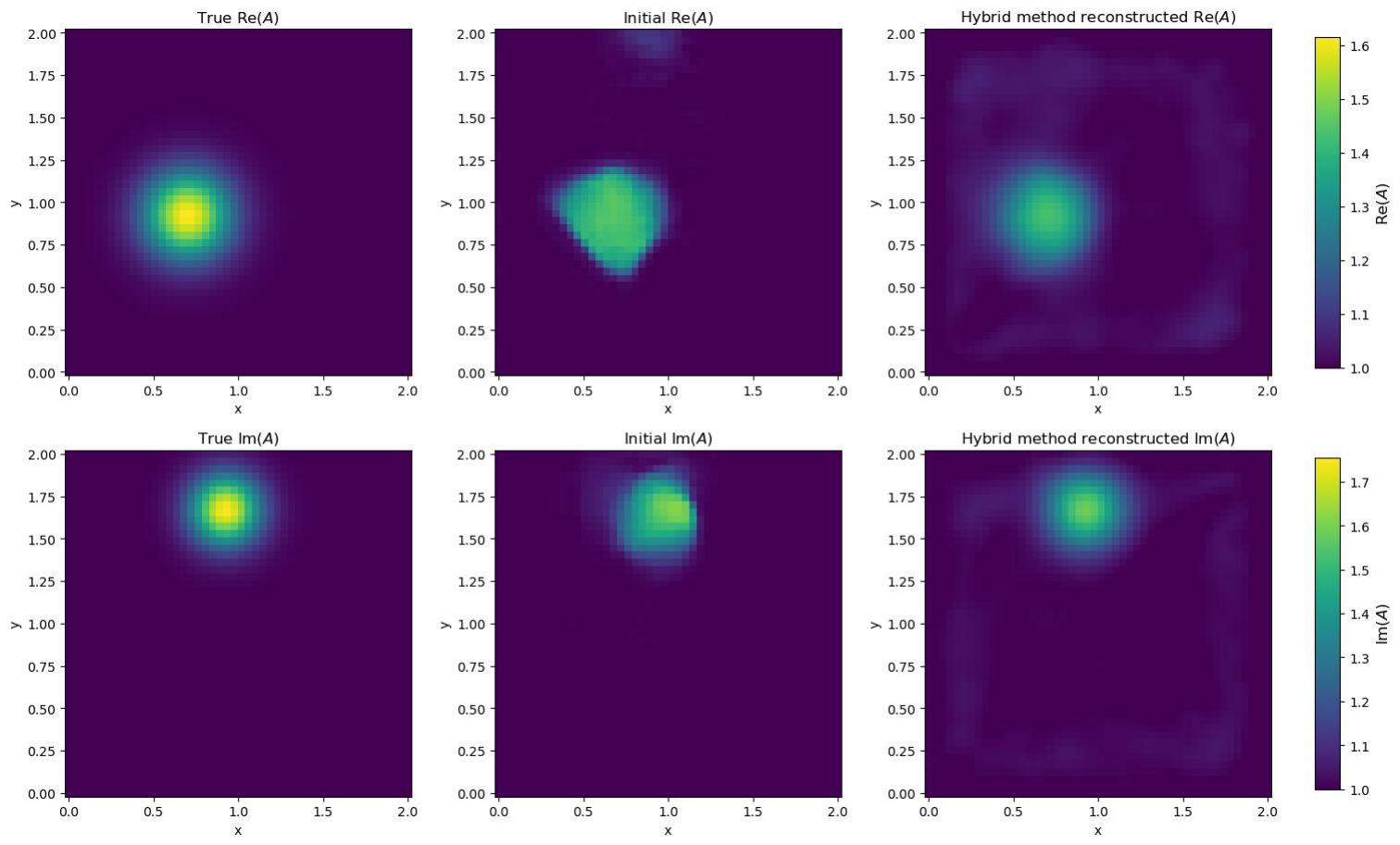}
    \caption{Reconstruction of a representative Gaussian coefficient from
    internal displacement data contaminated with $1\%$ relative noise.
    The left, middle, and right columns show the true coefficient, the
    CNN-generated initial estimate, and the final hybrid reconstruction,
    respectively. The top and bottom rows correspond to the real and
    imaginary components of $A(x,y)$.}
    \label{fig:gaussian_noise1_reconstruction}
\end{figure}

The corresponding coefficient reconstruction errors are reported in
Table~\ref{tab:gaussian_noise1_errors}.

\begin{table}[H]
\centering
\caption{Reconstruction errors for the representative Gaussian coefficient
using displacement data contaminated with $1\%$ relative noise.}
\label{tab:gaussian_noise1_errors}

\begin{tabular}{lccc}
\hline
\textbf{Reconstruction error}
& \textbf{CNN initialization}
& \textbf{Hybrid reconstruction}
& \textbf{Error decrease (\%)} \\
\hline

$\|\operatorname{Re}(A)-\operatorname{Re}(A_d)\|_{L^2(\Omega)}$
& $6.291202\times10^{-2}$
& $6.229779\times10^{-2}$
& $0.98\%$ \\

$\|\operatorname{Im}(A)-\operatorname{Im}(A_d)\|_{L^2(\Omega)}$
& $5.586334\times10^{-2}$
& $5.436543\times10^{-2}$
& $2.68\%$ \\

$\|A-A_d\|_{L^2(\Omega)}$
& $8.413462\times10^{-2}$
& $8.268383\times10^{-2}$
& $1.72\%$ \\

\hline
\end{tabular}
\end{table}

For the mollified-ellipse inclusion, the CNN prediction is again used as
the initial iterate for the PDE-constrained optimization. The optimization
parameters used for this reconstruction are summarized in
Table~\ref{tab:noise1_opt_parameters}.

Figure~\ref{fig:ellipse_noise1_reconstruction} compares the true
mollified-ellipse coefficient, the CNN-generated initial estimate, and
the final hybrid reconstruction.

\begin{figure}[H]
    \centering
    \includegraphics[width=0.7\textwidth]
    {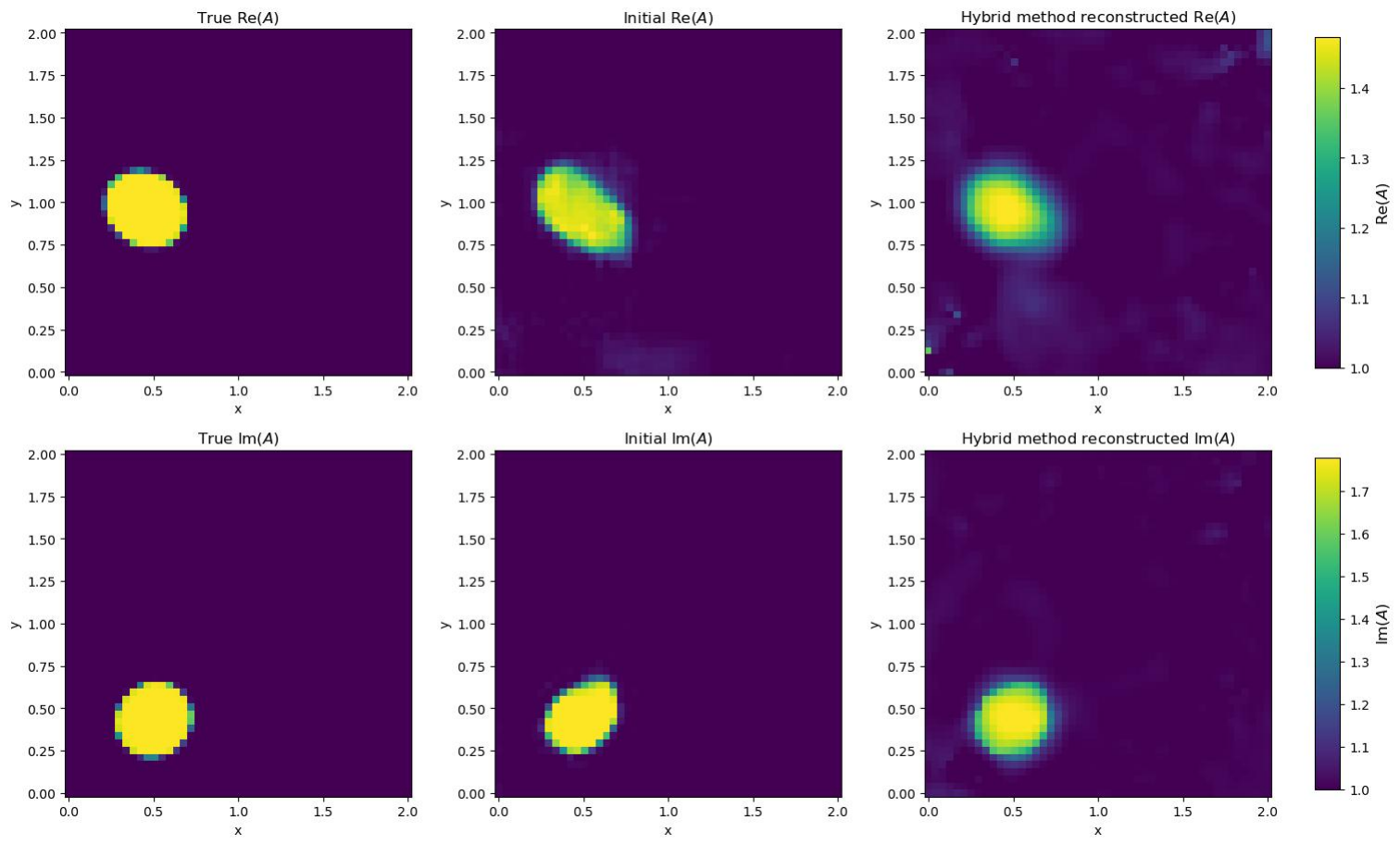}
    \caption{Reconstruction of a representative mollified-ellipse
    coefficient from internal displacement data contaminated with $1\%$
    relative noise. The left, middle, and right columns show the true
    coefficient, the CNN-generated initial estimate, and the final hybrid
    reconstruction, respectively. The top and bottom rows correspond to
    the real and imaginary components of $A(x,y)$.}
    \label{fig:ellipse_noise1_reconstruction}
\end{figure}

The corresponding coefficient reconstruction errors are reported in
Table~\ref{tab:ellipse_noise1_errors}.

\begin{table}[H]
\centering
\caption{Reconstruction errors for the representative mollified-ellipstical disc
coefficient using displacement data contaminated with $1\%$ relative noise.}
\label{tab:ellipse_noise1_errors}

\begin{tabular}{lccc}
\hline
\textbf{Reconstruction error}
& \textbf{CNN initialization}
& \textbf{Hybrid reconstruction}
& \textbf{Error decrease} \\
\hline

$\|\operatorname{Re}(A)-\operatorname{Re}(A_d)\|_{L^2(\Omega)}$
& $7.901710\times10^{-2}$
& $7.279785\times10^{-2}$
& $7.87\%$ \\

$\|\operatorname{Im}(A)-\operatorname{Im}(A_d)\|_{L^2(\Omega)}$
& $1.040895\times10^{-1}$
& $6.924087\times10^{-2}$
& $33.48\%$ \\

$\|A-A_d\|_{L^2(\Omega)}$
& $1.306841\times10^{-1}$
& $1.004680\times10^{-1}$
& $23.12\%$ \\

\hline
\end{tabular}
\end{table}

The PDE-constrained optimization further refines the localized coefficient
features obtained from the CNN initialization. The principal Gaussian and mollified-elliptical
inclusions remain well recovered in both the real and imaginary components
of the coefficient. However, the optimized solution exhibits oscillatory
artifacts away from the inclusions. These artifacts illustrate the increased
sensitivity of the PDE-constrained reconstruction to perturbations in the
measured displacement data. In the presence of noisy
data, continued reduction of the displacement misfit may also introduce
spatial variations associated with the measurement perturbations. Thus,
although the dominant coefficient features remain identifiable, the noisy
experiments indicate that the choice of regularization strength and stopping
criterion becomes increasingly important. These observations motivate a
more careful treatment of regularization and early stopping for noisy
measurements.

\subsubsection{Experiment II: reconstruction of combined Gaussian and mollified-ellipse structures}

We next consider a more challenging test problem in which the coefficient
fields contain both shape families simultaneously. A second test dataset of
$500$ samples is generated, where every sample contains one Gaussian and one
mollified-ellipse perturbation in both the real and imaginary components of
the coefficient.

More precisely, we define
\begin{equation}
B_1(x,y)
=
G_r(x,y)+E_r(x,y),
\end{equation}
and
\begin{equation}
B_2(x,y)
=
G_i(x,y)+E_i(x,y),
\end{equation}
where $G_r$ and $G_i$ denote the Gaussian perturbations in the real and
imaginary components, respectively, while $E_r$ and $E_i$ denote the
corresponding mollified-ellipse perturbations. The resulting complex-valued
coefficient is therefore given by
\begin{equation}
A(x,y)
=
\left[1+G_r(x,y)+E_r(x,y)\right]
+
i\left[1+G_i(x,y)+E_i(x,y)\right].
\label{eq:combined_test_coeff}
\end{equation}

The parameters defining the Gaussian and mollified-ellipse structures are
generated independently for the real and imaginary components. For the
Gaussian perturbations, the amplitudes are sampled according to
\[
a_r^G,a_i^G\sim\mathcal{U}(0.2,0.8),
\]
the Gaussian spreads satisfy
\[
\sigma_r,\sigma_i\sim\mathcal{U}(0.1,0.3),
\]
and the center coordinates are sampled from
\[
c_{x,r}^G,c_{y,r}^G,c_{x,i}^G,c_{y,i}^G
\sim\mathcal{U}(0.3,1.7).
\]

For the mollified-ellipse perturbations, the amplitudes satisfy
\[
a_r^E,a_i^E\sim\mathcal{U}(0.2,0.8),
\]
while the semi-axis lengths are sampled according to
\[
r_{x,r},r_{x,i}\sim\mathcal{U}(0.2,0.3),
\qquad
r_{y,r},r_{y,i}\sim\mathcal{U}(0.1,0.3).
\]
The rotation angles and smoothing parameters are independently sampled from
\[
\theta_r,\theta_i\sim\mathcal{U}(0,\pi),
\qquad
\varepsilon_r,\varepsilon_i\sim\mathcal{U}(0.03,0.08).
\]
The centers of the mollified elliptical discs are restricted according to their
semi-axis lengths and smoothing parameters so that the inclusions remain
inside the computational domain.

Thus, Experiment II differs substantially from Experiment I. The CNN was
trained using Gaussian and mollified-ellipse coefficient fields as separate
samples. In particular, during training the network never encountered a
coefficient field in which a Gaussian and a mollified elliptical disc were present
simultaneously within the same spatial domain. Experiment I preserved the
structure of the training data, since each test coefficient contains only
one of the two shape families. In Experiment II, however, both shape families
occur simultaneously in each component of the coefficient, so that a single
coefficient field contains both a Gaussian and a mollified-ellipse
perturbation. Consequently, the CNN has not been trained directly on
coefficient fields of the form~\eqref{eq:combined_test_coeff}. This experiment
therefore provides a more challenging generalization test and investigates
whether the spatial features learned separately for the two shape families
can be transferred to coefficient fields containing their superposition.

As in Experiment I, the internal displacement field corresponding to each
coefficient is obtained by solving the forward PDE using the same Dirichlet
boundary excitation used during training. The test data considered in
this experiment are noiseless.

Table~\ref{tab:cnn_combined_test_errors} summarizes the reconstruction
errors over all $500$ combined-shape test samples.

\begin{table}[H]
\centering
\caption{CNN reconstruction errors over the $500$ combined Gaussian and
mollified-ellipse test samples in Experiment II.}
\label{tab:cnn_combined_test_errors}

\begin{tabular}{lccccc}
\hline
 & Mean & Std. & Median & Min. & Max. \\
\hline

$\operatorname{Re}(A)$
& $1.279967\times10^{-1}$
& $4.694296\times10^{-2}$
& $1.202852\times10^{-1}$
& $4.829479\times10^{-2}$
& $3.416799\times10^{-1}$ \\

$\operatorname{Im}(A)$
& $1.290662\times10^{-1}$
& $4.606712\times10^{-2}$
& $1.234582\times10^{-1}$
& $4.722147\times10^{-2}$
& $3.733799\times10^{-1}$ \\

$A$
& $1.852892\times10^{-1}$
& $5.509064\times10^{-2}$
& $1.760132\times10^{-1}$
& $8.382436\times10^{-2}$
& $4.479576\times10^{-1}$ \\

\hline
\end{tabular}
\end{table}

The distributions of these errors over the complete test dataset are shown
in Figure~\ref{fig:cnn_combined_error_histograms}. The error
distributions therefore provide a quantitative assessment of the ability
of the CNN to generalize from individual Gaussian and mollified-ellipse
structures to coefficients containing their superposition.

\begin{figure}[H]
\centering

\begin{subfigure}{0.32\textwidth}
    \centering
    \includegraphics[width=\linewidth]
    {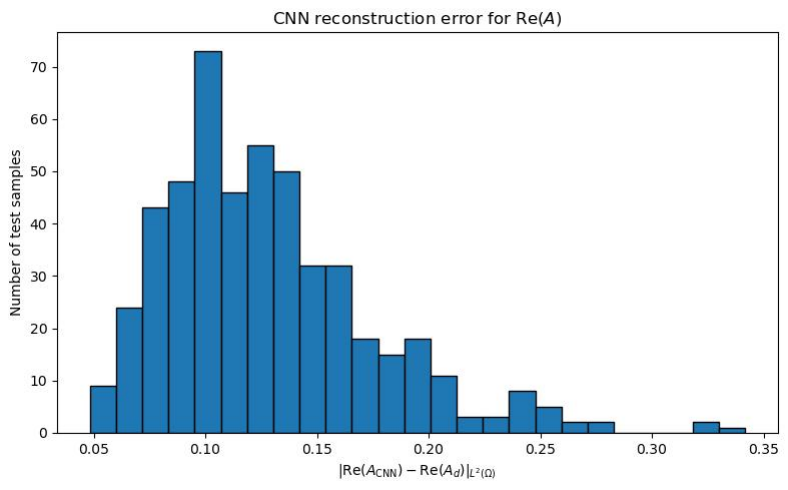}
    \caption{$L^2(\Omega)$ reconstruction errors for
    $\operatorname{Re}(A)$.}
\end{subfigure}
\hfill
\begin{subfigure}{0.32\textwidth}
    \centering
    \includegraphics[width=\linewidth]
    {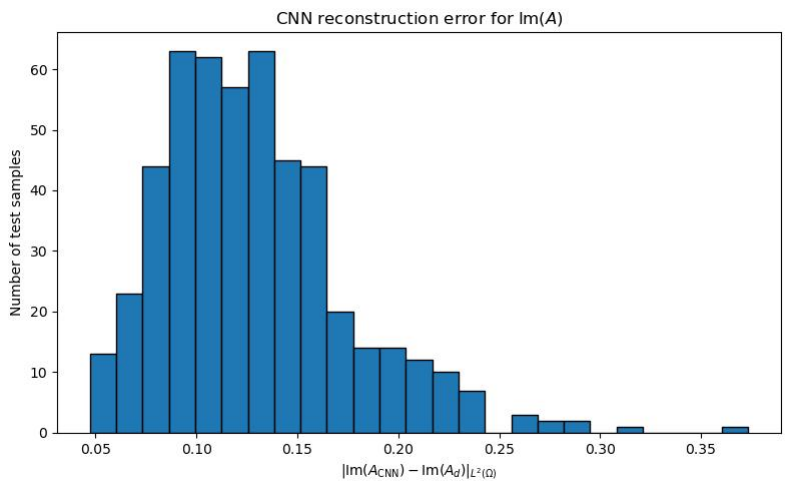}
    \caption{$L^2(\Omega)$ reconstruction errors for
    $\operatorname{Im}(A)$.}
\end{subfigure}
\hfill
\begin{subfigure}{0.32\textwidth}
    \centering
    \includegraphics[width=\linewidth]
    {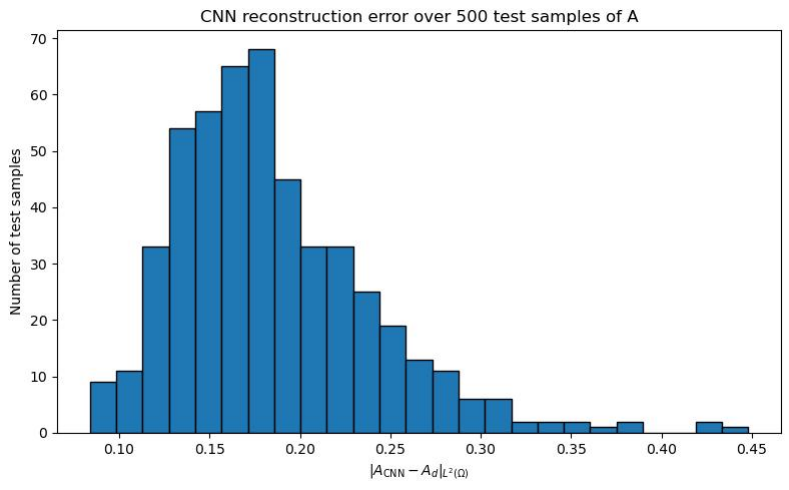}
    \caption{$L^2(\Omega)$ reconstruction errors for the
    complex-valued coefficient $A$.}
\end{subfigure}

\caption{Distribution of the CNN reconstruction errors over all $500$
test samples in Experiment II. Each real and imaginary component of the
coefficient contains both a Gaussian and a mollified-ellipse perturbation.}
\label{fig:cnn_combined_error_histograms}

\end{figure}

Having evaluated the CNN over the complete test dataset, we next examine
representative combined-shape reconstructions in more detail. We first
consider a sample in which the Gaussian and mollified-ellipse structures
are spatially separated. We subsequently consider a more challenging
configuration in which the two structures overlap. This example
illustrates the spatial structures recovered by the CNN when both inclusion
types are present simultaneously and the quality of the resulting initial
coefficient field supplied to the subsequent PDE-constrained optimization
stage.

For a representative test coefficient containing spatially separated
Gaussian and mollified-ellipse perturbations, the corresponding internal
displacement field is shown in
Figure~\ref{fig:combined_displacement_input}. The real and imaginary
components of this complex-valued displacement field constitute the two
input channels supplied to the CNN.

\begin{figure}[H]
\centering
\includegraphics[width=0.5\linewidth]
{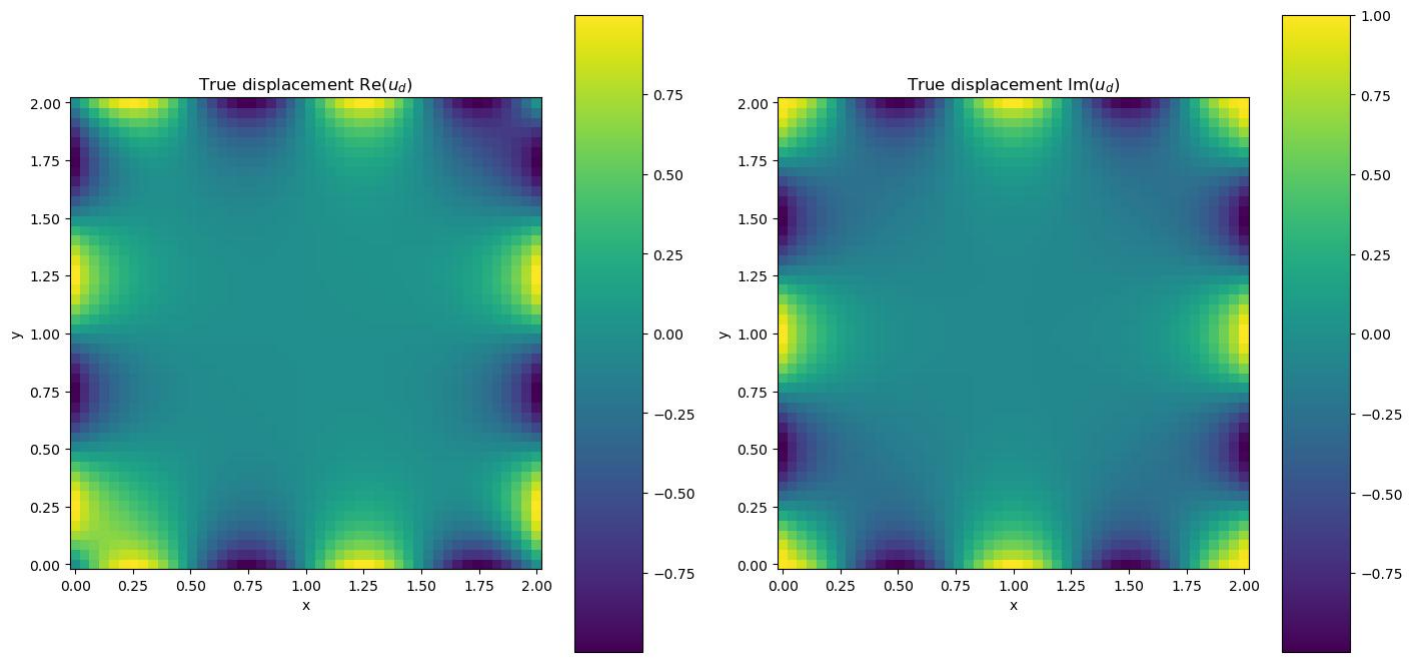}

\caption{Real and imaginary components of the internal displacement field
$u_d$ corresponding to a representative test coefficient containing
combined Gaussian and mollified-ellipse perturbations. These displacement
data form the two input channels to the CNN.}

\label{fig:combined_displacement_input}
\end{figure}

Using the displacement field in
Figure~\ref{fig:combined_displacement_input}, the CNN produces the
coefficient estimate shown in Figure~\ref{fig:CNN_combined_sample}.
Although the network was trained only on samples containing individual
Gaussian or mollified-ellipse perturbations, the prediction identifies the
principal locations and approximate geometries of the multiple structures
present in the coefficient. Some discrepancies remain in the amplitudes,
shapes, and transition regions of the reconstructed inclusions.

\begin{figure}[H]
\centering

\begin{subfigure}{0.5\textwidth}
    \centering
    \includegraphics[width=\linewidth]
    {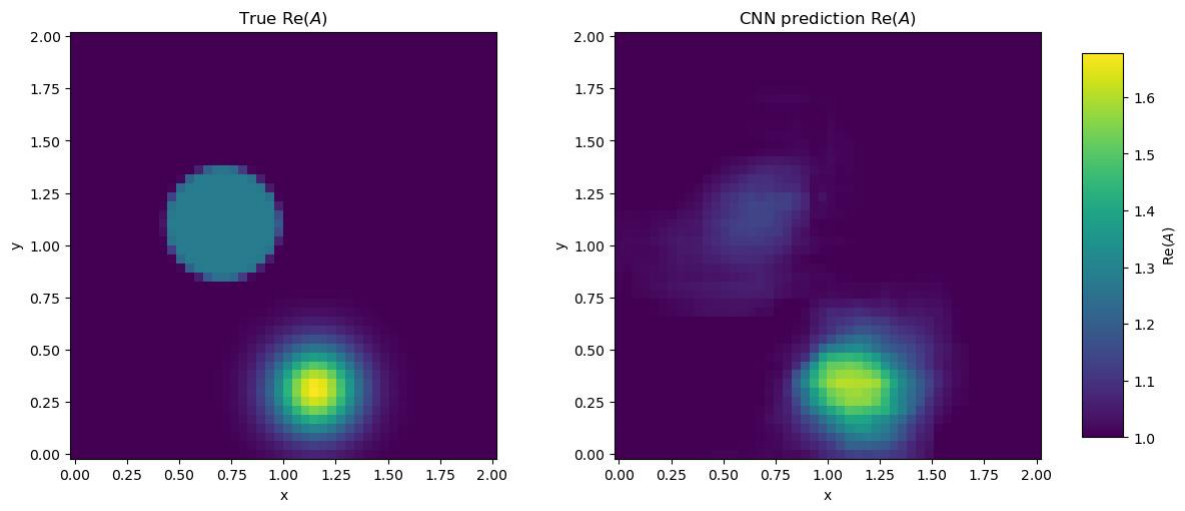}
    \caption{Real component of the coefficient.}
    \label{fig:CNN_combined_real}
\end{subfigure}

\begin{subfigure}{0.5\textwidth}
    \centering
    \includegraphics[width=\linewidth]
    {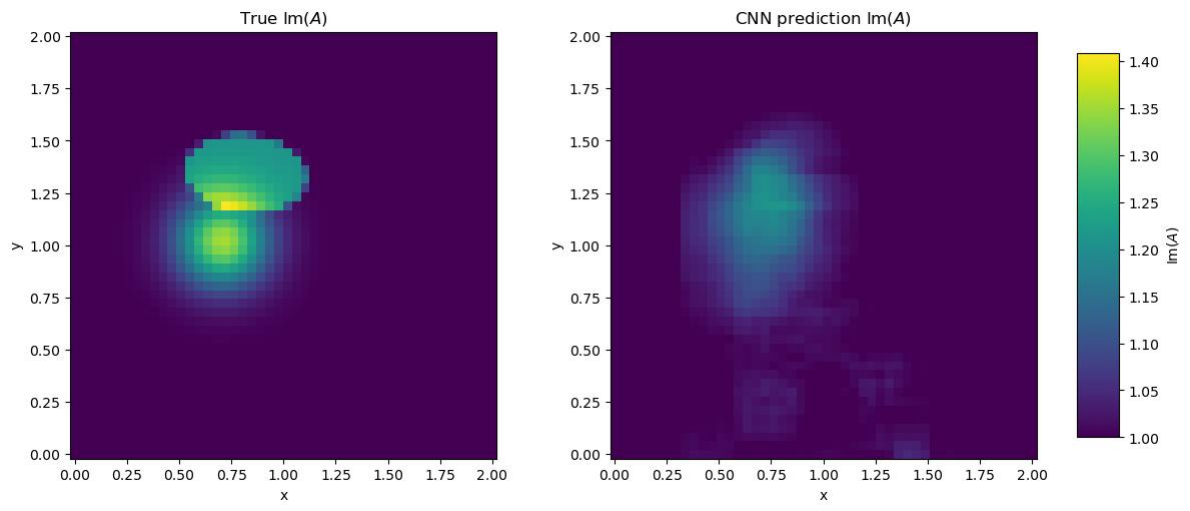}
    \caption{Imaginary component of the coefficient.}
    \label{fig:CNN_combined_imag}
\end{subfigure}

\caption{Representative CNN prediction for a combined Gaussian and
mollified-ellipse test sample from Experiment II. The top row compares
the true and CNN-predicted real components of $A(x,y)$, while the bottom
row compares the corresponding imaginary components. Both components
contain one Gaussian and one independently generated mollified elliptical disc.}

\label{fig:CNN_combined_sample}
\end{figure}

The CNN prediction in Figure~\ref{fig:CNN_combined_sample} provides an
informative approximation of the unknown coefficient over the complete
spatial domain despite the increased structural complexity of the test
sample. In particular, the network is able to identify multiple localized
features even though such combined configurations were not explicitly
included in the training dataset. The CNN-generated coefficient is
therefore used as the initial guess for the PDE-constrained optimization.

\paragraph{Optimization setup.}

For the optimization stage of the hybrid framework, the CNN-predicted
coefficient is used as the initial iterate for the PDE-constrained
optimization. The reconstruction is
then refined by minimizing the regularized PDE-constrained objective
functional introduced previously. In all reconstructions considered in Experiment II, we use
the NCG method with an Armijo backtracking line search and the fixed
optimization parameters listed in Table~\ref{tab:experiment2_optimization_parameters}.

\begin{table}[H]
\centering
\caption{Optimization parameters used throughout Experiment II.}
\label{tab:experiment2_optimization_parameters}

\begin{tabular}{lc}
\hline
Parameter & Value \\
\hline
Data-fidelity weight $\alpha$ & 20 \\
Regularization parameter $\beta$ & $1\times10^{-7}$ \\
Maximum NCG iterations $k_{\max}$ & 2500 \\
Initial line-search step size & 1 \\
Armijo parameter & $1\times10^{-4}$ \\
Backtracking reduction factor & 0.8 \\
\hline
\end{tabular}
\end{table}

To directly illustrate the contribution of the optimization stage, we
compare the true coefficient, the CNN-generated initial estimate, and the
coefficient obtained after PDE-constrained optimization. The corresponding
comparison is shown in Figure~\ref{fig:combined_hybrid_comparison}.

\begin{figure}[H]
\centering
\includegraphics[width=0.7\linewidth]
{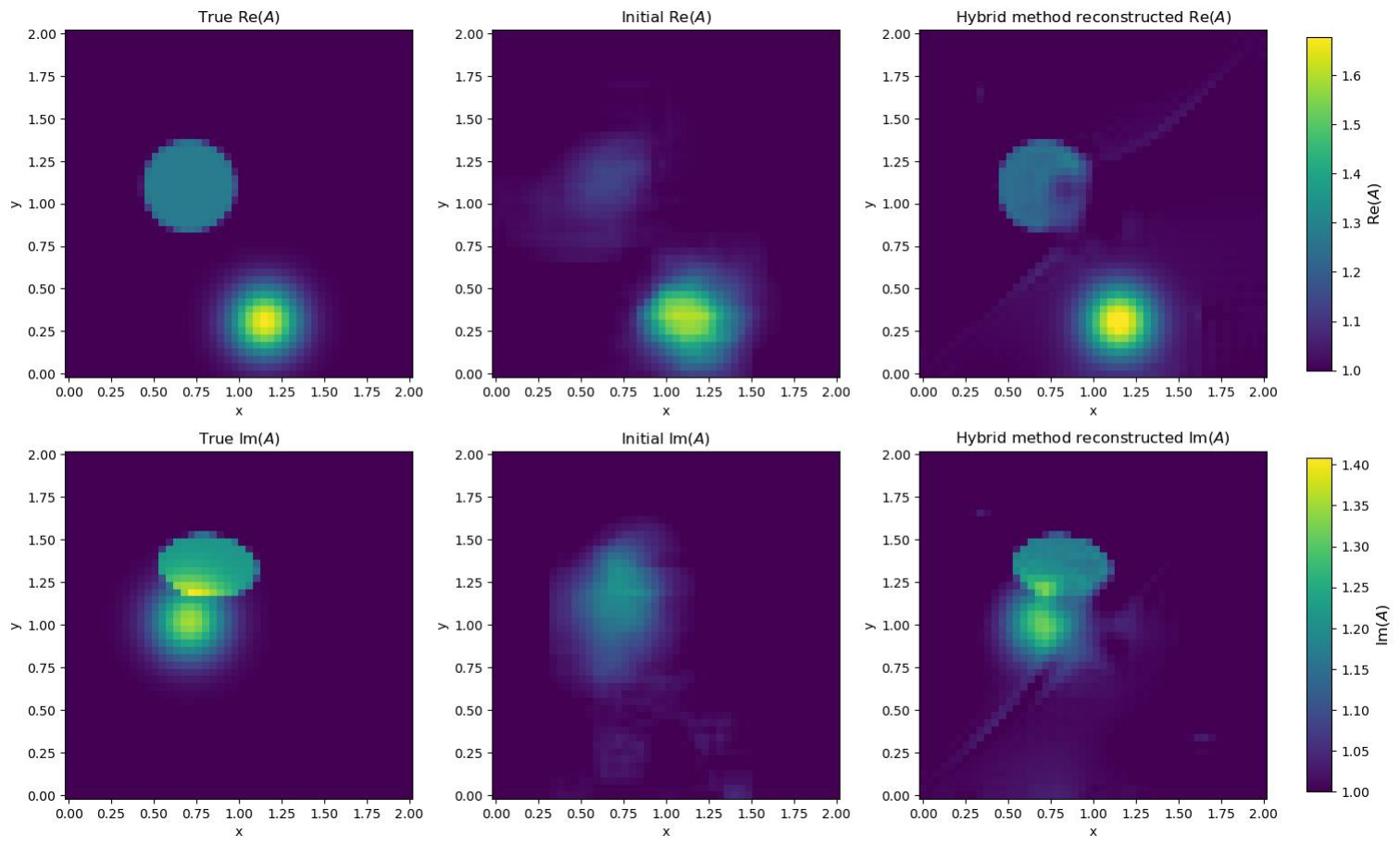}

\caption{End-to-end hybrid reconstruction for a representative combined
Gaussian and mollified-ellipse test sample from Experiment II. The left
column shows the true coefficient, the middle column shows the CNN-generated
initial estimate, and the right column shows the coefficient reconstructed
after PDE-constrained optimization. The top and bottom rows correspond to
the real and imaginary components of $A(x,y)$, respectively.}

\label{fig:combined_hybrid_comparison}
\end{figure}

To quantify the improvement obtained through the optimization stage, we
compare the reconstruction errors of the CNN-generated initial estimate
with those of the final hybrid reconstruction. The results are summarized
in Table~\ref{tab:combined_hybrid_errors}.

\begin{table}[H]
\centering
\caption{Reconstruction errors for the representative combined Gaussian
and mollified-ellipse test sample before and after the hybrid
reconstruction framework.}
\label{tab:combined_hybrid_errors}

\begin{tabular}{lccc}
\hline
 & CNN initial estimate & Hybrid reconstruction & Error decrease (\%) \\
\hline

$\|\operatorname{Re}(A)-\operatorname{Re}(A_d)\|_{L^2}$
& $1.097275\times10^{-1}$
& $5.953433\times10^{-2}$
& $45.74$ \\

$\|\operatorname{Im}(A)-\operatorname{Im}(A_d)\|_{L^2}$
& $7.435254\times10^{-2}$
& $4.500365\times10^{-2}$
& $39.47$ \\

$\|A-A_d\|_{L^2}$
& $1.325459\times10^{-1}$
& $7.463019\times10^{-2}$
& $43.70$ \\

\hline
\end{tabular}
\end{table}

Although some localized artifacts appear in the optimized reconstruction,
the PDE-constrained refinement substantially improves the overall
coefficient recovery.

We further examine a more challenging configuration in which the Gaussian
and mollified-ellipse perturbations overlap spatially. In contrast to the
previous representative sample, the two structures are no longer clearly
separated within the domain. Instead, their superposition produces a
coefficient variation in which the spatial features of the Gaussian and
mollified elliptical disc interact within the same region.
The overlapping case consequently provides a stronger test of whether the
features learned from the individual shape families can be transferred to
more complicated coefficient configurations.

\begin{figure}[H]
\centering
\includegraphics[width=0.7\linewidth]
{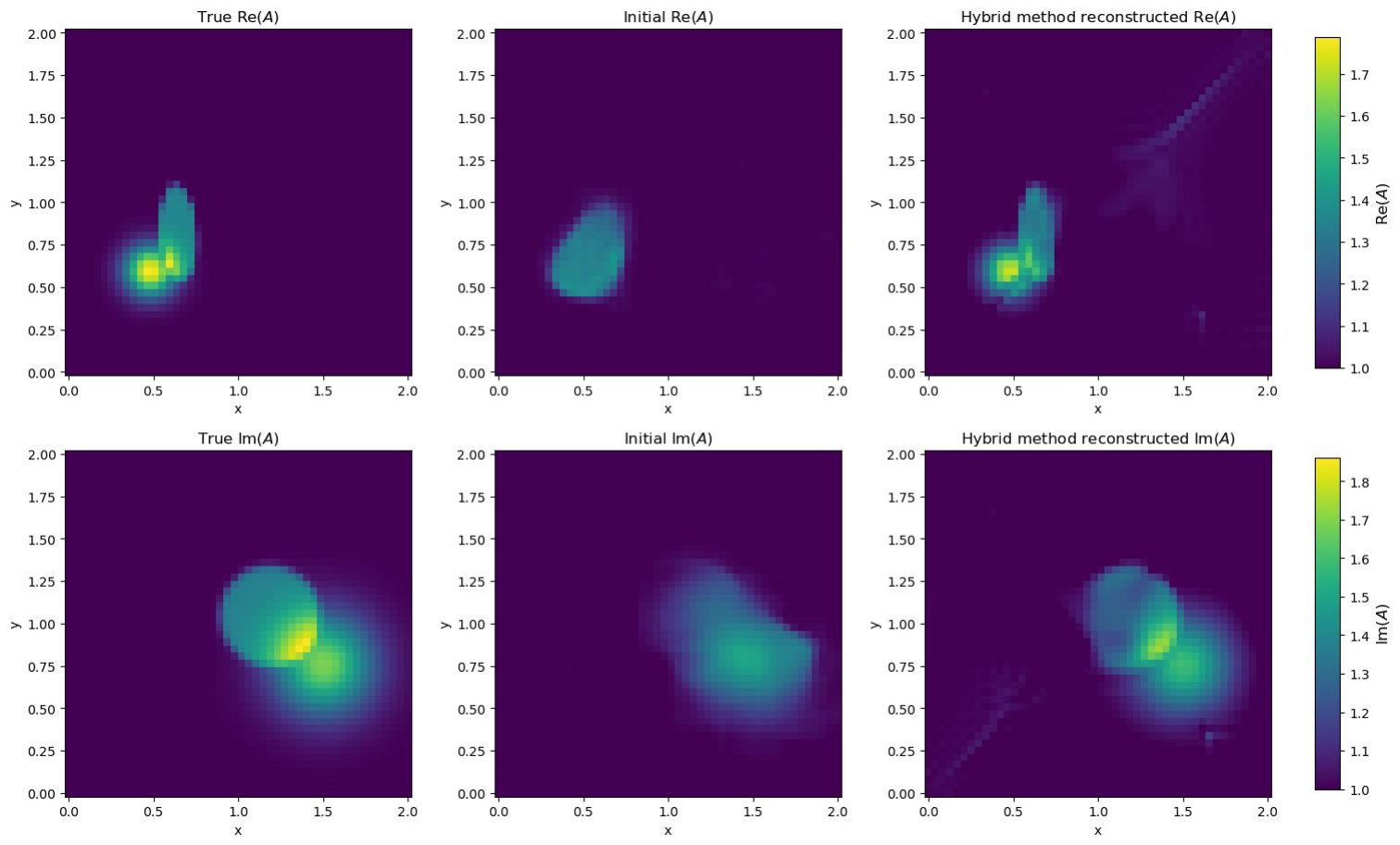}

\caption{End-to-end hybrid reconstruction for a representative test sample
with spatially overlapping Gaussian and mollified-ellipse perturbations.
The left column shows the true coefficient, the middle column shows the
CNN-generated initial estimate, and the right column shows the coefficient
obtained after PDE-constrained optimization. The top and bottom rows
correspond to the real and imaginary components of $A(x,y)$, respectively.
This example provides a more challenging generalization test because the
two inclusion types occupy overlapping spatial regions, while the CNN was
trained only on the individual shape families.}

\label{fig:combined_overlap_hybrid_comparison}
\end{figure}

\begin{table}[H]
\centering
\caption{Reconstruction errors for the representative test sample with
overlapping Gaussian and mollified-ellipse perturbations.}
\label{tab:combined_overlap_hybrid_errors}

\begin{tabular}{lccc}
\hline
 & CNN initial estimate & Hybrid reconstruction & Error decrease (\%) \\
\hline

$\|\operatorname{Re}(A)-\operatorname{Re}(A_d)\|_{L^2}$
& $8.751159\times10^{-2}$
& $5.142576\times10^{-2}$
& $41.24$ \\

$\|\operatorname{Im}(A)-\operatorname{Im}(A_d)\|_{L^2}$
& $1.562200\times10^{-1}$
& $1.140009\times10^{-1}$
& $27.03$ \\

$\|A-A_d\|_{L^2}$
& $1.790614\times10^{-1}$
& $1.250632\times10^{-1}$
& $30.16$ \\

\hline
\end{tabular}
\end{table}

For the representative overlapping configuration, the PDE-constrained
optimization improves the CNN-generated initial reconstruction for both
components and for the full complex-valued coefficient $A$.

The results of Experiment II investigate the generalization capability of
the hybrid reconstruction framework beyond the coefficient configurations
used for CNN training. Although the network was trained on Gaussian and
mollified-ellipse structures separately, its prediction provides an
informative initial estimate when both structures occur simultaneously.
The subsequent PDE-constrained optimization then refines this data-driven
estimate by enforcing consistency with the governing forward model and the
measured internal displacement field. Thus, the CNN and PDE-constrained
optimization play complementary roles: the CNN supplies a spatially
informed initialization, while the optimization incorporates the physics
of the inverse problem to improve the final reconstruction.

\paragraph{Robustness to noisy displacement data.}

We further investigate the robustness of the reconstruction framework for
the combined-shape configurations of Experiment II when the internal
displacement measurements are contaminated by noise. The same coefficient
samples used in the noiseless experiment are retained, while the
corresponding displacement fields are perturbed before being supplied to
the CNN.

As in Experiment I, let $u_d$ denote the noiseless internal displacement
and let $u_d^\delta$ denote the corresponding noisy measurement. The
perturbation is scaled according to \eqref{eq:relative_noise_exp1}. In this
experiment, we consider the relative noise level
\[
\delta = 0.01,
\]
corresponding to $1\%$ noise.

The CNN is retained without retraining and is applied directly to the noisy
displacement fields. The network was trained only on coefficient
fields containing either a Gaussian or a mollified-ellipse inclusion,
whereas each test coefficient in Experiment II contains both inclusion
types simultaneously. Consequently, the present experiment evaluates the
CNN under the combined effects of measurement noise and an unseen
coefficient configuration. The resulting CNN predictions are subsequently
used as initial guesses for the PDE-constrained optimization.

\begin{figure}[H]
\centering

\begin{subfigure}{0.32\textwidth}
    \centering
    \includegraphics[width=\linewidth]
    {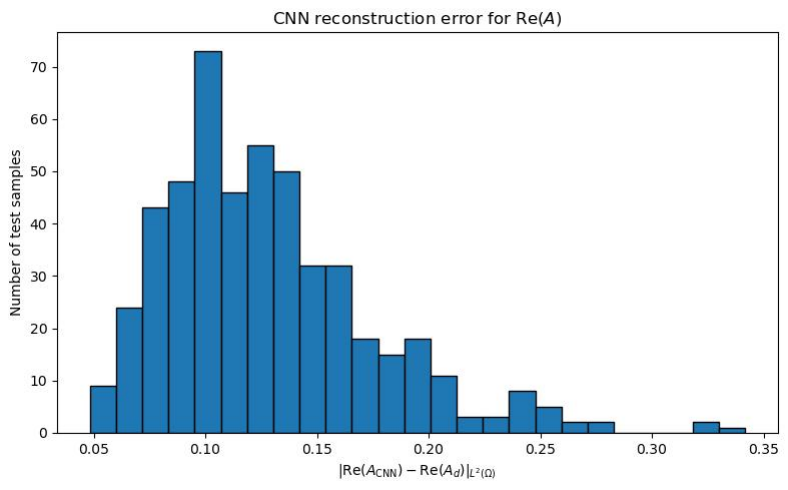}
    \caption{$L^2(\Omega)$ reconstruction errors for
    $\operatorname{Re}(A)$.}
\end{subfigure}
\hfill
\begin{subfigure}{0.32\textwidth}
    \centering
    \includegraphics[width=\linewidth]
    {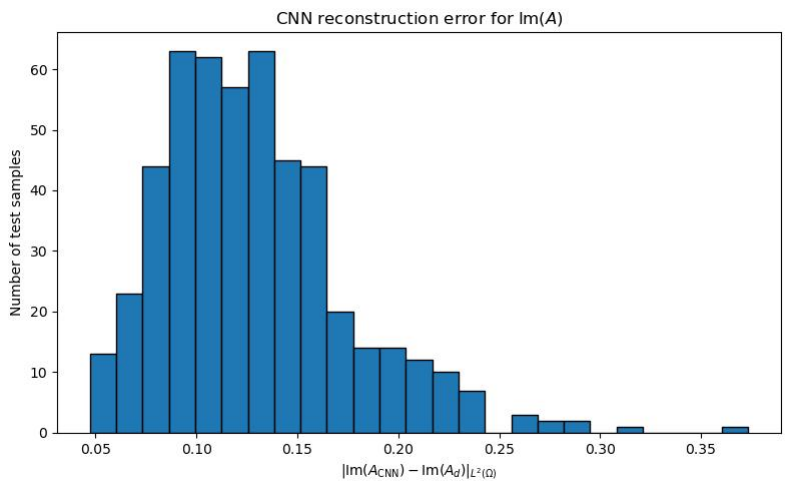}
    \caption{$L^2(\Omega)$ reconstruction errors for
    $\operatorname{Im}(A)$.}
\end{subfigure}
\hfill
\begin{subfigure}{0.32\textwidth}
    \centering
    \includegraphics[width=\linewidth]
    {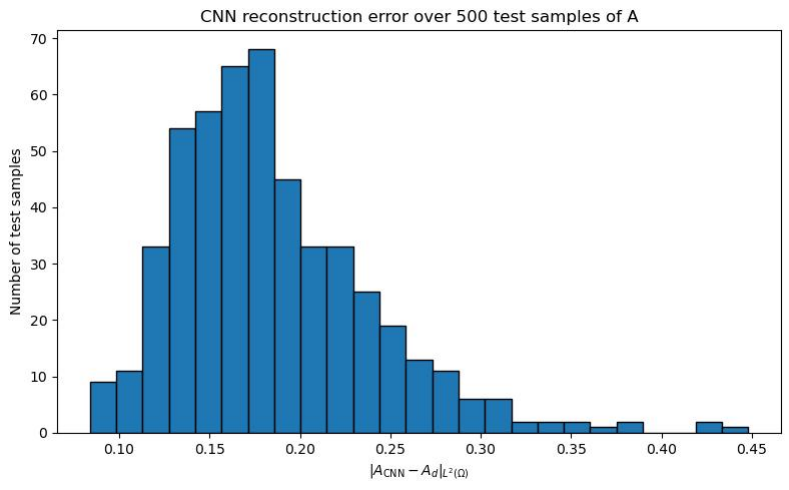}
    \caption{$L^2(\Omega)$ reconstruction errors for the
    complex-valued coefficient $A$.}
\end{subfigure}

\caption{Distribution of the CNN reconstruction errors over all $500$
combined-shape test samples using internal displacement data contaminated
with $1\%$ relative noise. The CNN is trained using noiseless individual
Gaussian and mollified-ellipse samples and is applied to the noisy
combined-shape measurements without retraining.}
\label{fig:cnn_exp2_noise1_error_histograms}

\end{figure}

To examine the performance of the complete hybrid framework in the
presence of measurement noise, we consider a representative combined-shape
coefficient from the test dataset. In this example, the Gaussian and
mollified-ellipse inclusions are spatially separated in the real component
of the coefficient, while partial overlap occurs in the imaginary component.
The corresponding displacement measurement is contaminated with $1\%$
relative noise and supplied to the pretrained CNN. The resulting CNN
prediction is then used as the initial iterate for the PDE-constrained
optimization.

The optimization and line-search parameters used for this reconstruction
are summarized in Table~\ref{tab:experiment2_optimization_parameters}.

Figure~\ref{fig:exp2_noise1_reconstruction} compares the true coefficient,
the CNN-generated initial estimate, and the final hybrid reconstruction.

\begin{figure}[H]
    \centering
    \includegraphics[width=0.7\textwidth]
    {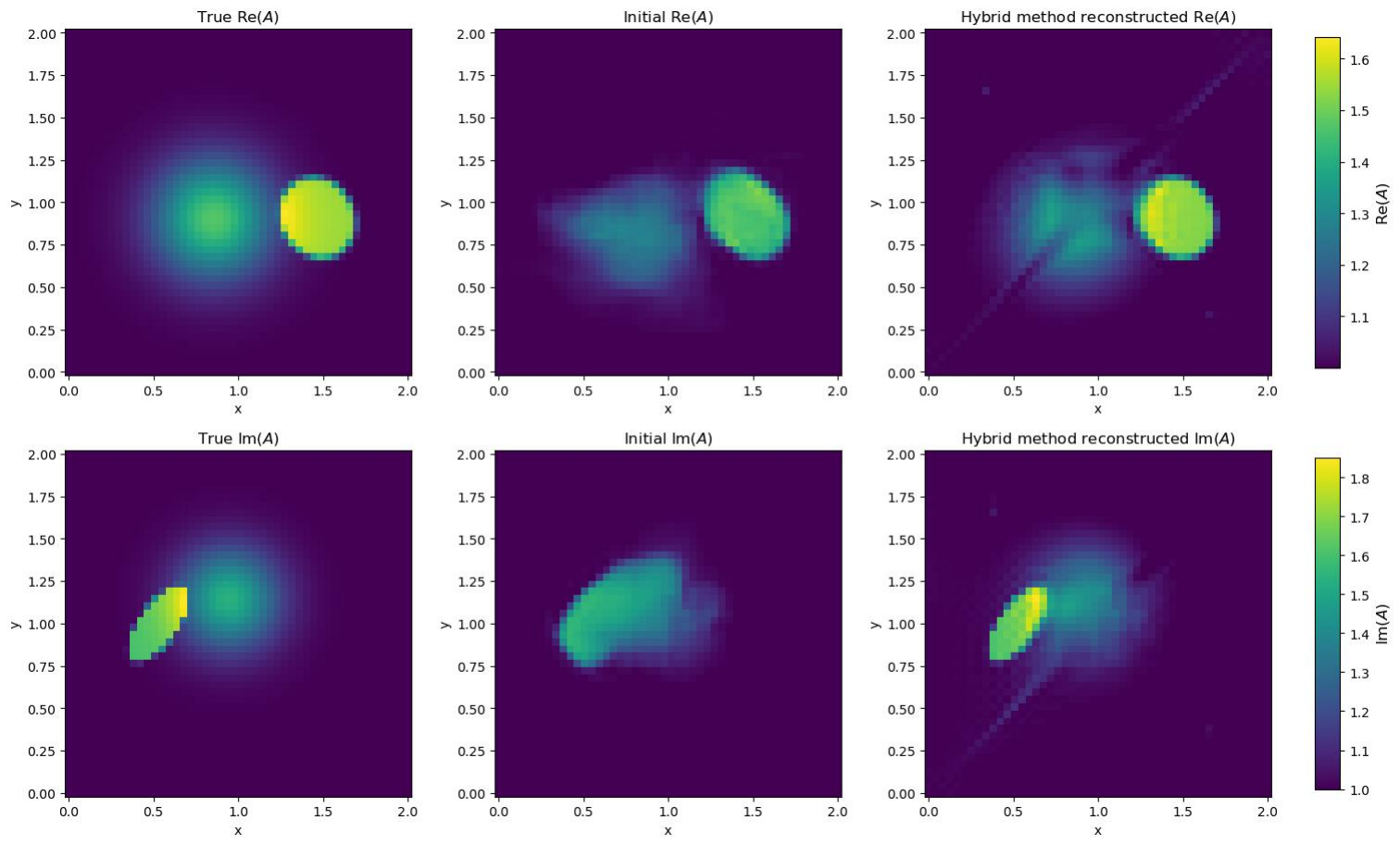}
    \caption{Reconstruction of a representative combined-shape coefficient
    from internal displacement data contaminated with $1\%$ relative noise.
    The real component contains spatially separated Gaussian and
    mollified-ellipse inclusions, whereas the inclusions partially overlap
    in the imaginary component. The left, middle, and right columns show
    the true coefficient, the CNN-generated initial estimate, and the final
    hybrid reconstruction, respectively. The top and bottom rows correspond
    to the real and imaginary components of $A(x,y)$.}
    \label{fig:exp2_noise1_reconstruction}
\end{figure}

The corresponding coefficient reconstruction errors are reported in
Table~\ref{tab:exp2_noise1_errors}.

\begin{table}[H]
\centering
\caption{Reconstruction errors for the representative combined-shape
coefficient using displacement data contaminated with $1\%$ relative noise.}
\label{tab:exp2_noise1_errors}

\begin{tabular}{lccc}
\hline
\textbf{Reconstruction error}
& \textbf{CNN initialization}
& \textbf{Hybrid reconstruction}
& \textbf{Error decrease (\%)} \\
\hline

$\|\operatorname{Re}(A)-\operatorname{Re}(A_d)\|_{L^2(\Omega)}$
& $1.229673\times10^{-1}$
& $8.524580\times10^{-2}$
& $30.68$ \\

$\|\operatorname{Im}(A)-\operatorname{Im}(A_d)\|_{L^2(\Omega)}$
& $1.193186\times10^{-1}$
& $6.553128\times10^{-2}$
& $45.08$ \\

$\|A-A_d\|_{L^2(\Omega)}$
& $1.713414\times10^{-1}$
& $1.075230\times10^{-1}$
& $37.24$ \\

\hline
\end{tabular}
\end{table}

For this combined-shape example, the PDE-constrained optimization improves
the CNN initialization despite the presence of $1\%$ measurement noise.

The improvement is also visible in the reconstructed coefficient fields.
For the real component, the hybrid reconstruction preserves the two
spatially separated inclusions and provides a sharper representation of
their geometry. For the imaginary component, the optimization improves the
distinction between the partially overlapping Gaussian and
mollified-ellipse structures. Thus, although the CNN was trained only on
noiseless samples containing individual inclusion types, its prediction
provides an effective initialization for the PDE-constrained reconstruction
of a more complex coefficient configuration in the presence of measurement
noise.

\section{Conclusion}

In this work, we developed a hybrid framework for reconstructing spatially varying complex-valued shear modulus functions from internal displacement measurements in a two-dimensional model of magnetic resonance elastography. The approach combines a convolutional neural network (CNN) with adjoint-based PDE-constrained optimization, using the CNN to provide a spatially informed initialization that is subsequently refined by a nonlinear conjugate-gradient method with Armijo line search. Numerical experiments demonstrate that the CNN provides useful initial reconstructions across different inclusion geometries and can generalize to configurations not encountered during training, while the PDE-constrained optimization improves reconstruction accuracy in the noiseless setting and enforces consistency with the governing physical model. Experiments with noisy measurements show that the CNN retains dominant spatial features under moderate perturbations, although the optimization becomes increasingly sensitive to noise and may introduce spatial artifacts, emphasizing the importance of appropriate regularization and stopping criteria. Overall, the results demonstrate the potential of combining data-driven initialization with physics-based optimization to reduce dependence on arbitrary initial guesses while retaining the interpretability of a mechanistic model. Future work will focus on noise-aware regularization, improved learning and optimization strategies, more heterogeneous coefficient geometries, and extensions to realistic three-dimensional MRE models.

\bibliographystyle{unsrt}
\bibliography{Mrebib}

\end{document}